\newtheorem{theorem}{Theorem}[section]
\newtheorem{proposition}[theorem]{Proposition}
\newtheorem{corollary}[theorem]{Corollary}
\newtheorem{lemma}[theorem]{Lemma}
\newtheorem{example}{Example}[section]
\def\ker{\mathop{\mathrm{Ker}}\nolimits}
\def\im{\mathop{\mathrm{Im}}\nolimits}
\def\rank{\mathop{\mathrm{rank}}\nolimits}
\def\reg{\mathop{\mathrm{Reg}}\nolimits}
\def\fix{\mathop{\mathrm{Fix}}\nolimits}
\def\N{\mathbb N}
\def\Z{\mathbb Z}
\def\R{\mathbb R}
\def\Q{\mathbb Q}
\def\U{\mathbb U}
\def\O{\mathcal{O}}
\def\T{\mathcal{T}}
\def\F{\mathcal{F}}
\def\FO{\mathcal{FO}}
\def\X{\mathscr{X}}
\def\Zn{\overline{\mathbb Z}}
\def\Nn{\overline{\mathbb N}}
\newcommand{\lastpage}{\addresss}
\newcommand{\addresss}{\small \sf  
\noindent{\sc V\'\i tor H. Fernandes}, 
Departamento de Matem\'atica, 
Faculdade de Ci\^encias e Tecnologia, 
Universidade Nova de Lisboa, 
Monte da Caparica, 
2829-516 Caparica, 
Portugal; 
also: 
Centro de \'Algebra da Universidade de Lisboa, 
Av. Prof. Gama Pinto 2, 
1649-003 Lisboa, 
Portugal; 
e-mail: vhf@fct.unl.pt

\medskip

\noindent{\sc Preeyanuch Honyam},  
Department of Mathematics, Faculty of Science, Chiang Mai University, Chiang Mai 50200, Thailand; 
e-mail: preeyanuch\_h@hotmail.com

\medskip

\noindent{\sc Teresa M. Quinteiro}, 
Instituto Superior de Engenharia de Lisboa, 
Rua Conselheiro Em\'\i dio Navarro 1, 
1950-062 Lisboa, 
Portugal; 
also: 
Centro de \'Algebra da Universidade de Lisboa, 
Av. Prof. Gama Pinto 2, 
1649-003 Lisboa, 
Portugal;
e-mail: tmelo@dec.isel.ipl.pt

\medskip

\noindent{\sc Boorapa Singha},  
School of Mathematics and Statistics, 
Faculty of Science and Technology, 
Chiang Mai Rajabhat University, 
Chiang Mai 50300, 
Thailand;
email: boorapa\_sin@cmru.ac.th 
}
\title{On semigroups of endomorphisms of a chain with restricted range}
\author{V\'\i tor H. Fernandes\footnote{This work was developed within the research activities of Centro de Álgebra da Universidade de Lisboa, FCT´s project PEst-OE/MAT/UI0143/2013, and of Departamento de Matemática da Faculdade de Ciências e Tecnologia da Universidade Nova de Lisboa.},~ 
Preeyanuch Honyam, 
Teresa M. Quinteiro\footnote{This work was developed within the research activities of Centro de Álgebra da Universidade de Lisboa, FCT´s project PEst-OE/MAT/UI0143/2013, and of Instituto Superior de Engenharia de Lisboa.}~ 
and Boorapa Singha}
\begin{document}
\maketitle

\begin{abstract}
Let $X$ be a finite or infinite chain and let $\O(X)$ be the monoid of all  endomorphisms of $X$. 
In this paper, we describe the largest regular subsemigroup of $\O(X)$ and Green's relations on $\O(X)$. 
In fact, more generally, if $Y$ is a nonempty subset of $X$ and $\O(X,Y)$ the subsemigroup of $\O(X)$ of all elements with range contained in $Y$, 
we characterize the largest regular subsemigroup of $\O(X,Y)$ and Green's relations on $\O(X,Y)$. 
Moreover, for finite chains, we determine when two semigroups of the type $\O(X,Y)$ are isomorphic and calculate their ranks. 
\end{abstract}

\medskip

\noindent{\small 2000 \it Mathematics subject classification: \rm 20M20, 20M10.} 

\noindent{\small\it Keywords: \rm transformations, order-preserving, restricted range, rank.}

\section*{Introduction and preliminaries}

Let $X$ be a nonempty set and denote by $\T(X)$ the monoid of all
(full) transformations of $X$ (under composition).  

Throughout this paper, we will represent a chain only by its support set and, as usual, 
its order by the symbol $\le$. 

Now, let $X$ be a chain.  
A function $\theta: A\longrightarrow X$ from a subchain $A$ of $X$ into $X$ is said to be \textit{order-preserving} if $x\le y$ implies $x\theta\le y\theta$, for all $x,y\in A$. 
Notice that, given two subchains $A$ and $B$ of $X$ and an \textit{order-isomorphism} 
(i.e. an order-preserving bijection) $\theta: A\longrightarrow B$,  
then the inverse function $\theta^{-1}: B\longrightarrow A$ is also an order-isomorphism. In this case, the subchains $A$ and $B$ are called \textit{order-isomorphic}. 
We denote by $\O(X)$ the submonoid of $\T(X)$ of all (order) \textit{endomorphisms} of $X$, 
i.e. of all order-preserving transformations of $X$. 

\smallskip 

For a finite chain $X$, it is well known, and clear, that $\O(X)$ is a regular semigroup. The problem for an infinite chain $X$ is much more involved. 
Nevertheless, more generally, a characterization of those posets $P$ for which the semigroup of all endomorphisms of $P$ is regular was done by A\v\i zen\v stat in 1968 
\cite{Aizenstat:1968} and, independently, by Adams and Gould in 1989 \cite{Adams&Gould:1989}. 
Returning to the finite case, 
if $X$ is a chain with $n$ elements, e.g. $X=\{1<2<\cdots<n\}$,  
we usually denote the monoid $\O(X)$ by $\O_n$.  
This monoid has been extensively studied since the sixties.
In fact, in 1962, A\v\i zen\v stat \cite{Aizenstat:1962,Aizenstat:1962b} 
showed that the
congruences of $\O_n$ are exactly Rees congruences and gave a monoid presentation for $\O_n$, 
in terms of $2n-2$ idempotent generators, from which it can be deduced that the only non-trivial automorphism of $\O_n$ where $n>1$ is 
the one given by conjugation by permutation $(1\; n)(2\;n-1)\cdots(\lfloor n/2\rfloor\; \lceil n/2\rceil+1)$. 
In 1971, Howie \cite{Howie:1971} calculated the cardinal and the number of idempotents of $\O_n$
and later (1992), jointly with Gomes \cite{Gomes&Howie:1992}, 
determined its rank and idempotent rank. 
More recently, Fernandes et al.~\cite{Fernandes&al:2010} described the endomorphisms of the semigroup $\O_n$ by showing that there are three types of endomorphism: automorphisms, constants, and a certain type of endomorphism with two idempotents in the image. 
The monoid $\O_n$ also played a main role in several other papers 
\cite{
Almeida&Volkov:1998,Fernandes:1997,Fernandes:2002,Fernandes&Volkov:2010,
Higgins:1995,Repnitskii&Vernitskii:2000,Repnitskii&Volkov:1998,Vernitskii&Volkov:1995}  
where the central topic concerns 
the problem of the decidability of the pseudovariety generated by the family 
$\{\O_n\mid n\in\N\}$. 
This question was posed 
by J.-E. Pin in 1987 in the ``Szeged International Semigroup Colloquium''
and, as far as we know, is still unanswered. 

\smallskip 

Given a nonempty subset $Y$ of $X$, we denote by $\T(X,Y)$ the subsemigroup 
$\{\alpha\in \T(X)\mid \im(\alpha)\subseteq Y\}$ of $\T(X)$ of all elements with range (image) restricted to $Y$. 

In 1975, Symons \cite{Symons:1975} introduced and studied the semigroup 
$\mathcal{T}(X,Y)$. 
He described all the automorphisms of $\mathcal{T}(X,Y)$ and also determined when 
two semigroups of this type are isomorphic. 
In \cite{Nenthein&et.al:2005}, Nenthein et al. characterized the regular elements of $\mathcal{T}(X,Y)$ and, in \cite{Sanwong&Sommanee:2008},
Sanwong and Sommanee obtained the largest regular subsemigroup of $\mathcal{T}(X,Y)$ and showed that this subsemigroup determines Green's relations 
on $\mathcal{T}(X,Y)$. Moreover, they also determined a class of maximal inverse subsemigroups of this semigroup. 
Later, in 2009, all maximal and minimal congruences on $\mathcal{T}(X,Y)$ were described by Sanwong et al.~\cite{Sanwong&et.al:2009}.
Recently, all the ideals of $\mathcal{T}(X,Y)$ were obtained by Mendes-Gon\c calves and Sullivan in \cite{Goncalves&Sullivan:2011} and, for a finite set $X$, Fernandes and Sanwong computed the rank of $\mathcal{T}(X,Y)$  \cite{Fernandes&Sanwong:2012}. 
On the other hand, in \cite{Sullivan:2008}, Sullivan considered the linear counterpart of $\mathcal{T}(X,Y)$, that is the semigroup $\mathcal{T}(V,W)$
which consists of all linear transformations from a vector space $V$ into a fixed subspace $W$ of $V$, and described its Green's relations and ideals. 

\smallskip

In this paper, for a chain $X$ and a nonempty subset $Y$ of $X$, 
we consider the order-preserving counterpart of the semigroup $\mathcal{T}(X,Y)$, namely the semigroup 
$\O(X,Y)=\mathcal{T}(X,Y)\cap\O(X) = \{\alpha\in \O(X)\mid \im(\alpha)\subseteq Y\}$. 
If $X$ is a (finite) chain, say $X=\{1<2<\cdots<n\}$, 
we denote $\O(X,Y)$ simply by $\O_n(Y)$. 

A description of the regular elements of $\O(X,Y)$ and a characterization of the regular semigroups of this type were given by Mora and Kemprasit in  \cite{Mora&Kemprasit:2010}. Here, in Section \ref{green}, we describe the largest regular subsemigroup of $\O(X,Y)$ (particularly of $\O(X)$) and Green's relations on $\O(X,Y)$. In special, we obtain descriptions for Green's relations on $\O(X)$, which surprisingly, as far as we know, were not characterized before. In Section \ref{iso}, for finite chains, we determine when two semigroups of the type $\O(X,Y)$ are isomorphic. Finally, in Section \ref{rank}, we calculate the rank of the semigroups $\O_n(Y)$, for each nonempty subset $Y$ of $\{1,2,\ldots,n\}$. 

\medskip 

For general background on Semigroup Theory and standard notation, 
we refer the reader to Howie's book \cite{Howie:1995}.

\section{Regularity and Green's relations}\label{green}

Let $X$ be any chain and let $Y$ be a nonempty subset of $X$. 

The following useful regularity criterion for the elements of $\O(X)$ was proved  in \cite[Theorem 2.4]{Mora&Kemprasit:2010} by Mora and Kemprasit. 

\begin{theorem}\label{regsOX}
Let $X$ be a chain and let $\alpha\in\O(X)$. Then $\alpha\in\reg(\O(X))$ if and only if the following three conditions hold:
\begin{enumerate}
\item If $\im(\alpha)$ has an upper bound in $X$, then $\max(\im(\alpha))$ exists; 
\item If $\im(\alpha)$ has a lower bound in $X$, then $\min(\im(\alpha))$ exists;
\item If $x\in X\setminus\im(\alpha)$ is neither an upper bound nor a lower bound of $\im(\alpha)$, 
then either $\max\{a\in\im(\alpha)\mid a<x\}$ or $\min\{a\in\im(\alpha)\mid x<a\}$ exists. 
\end{enumerate}
\end{theorem}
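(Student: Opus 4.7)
\emph{Plan.} I will prove the two implications separately. The key observation throughout is that if $\alpha=\alpha\beta\alpha$ for some $\beta\in\O(X)$, then $\alpha(\beta(y))=y$ for every $y\in\im(\alpha)$, so $\beta$ acts as an order-preserving section of $\alpha$ on its image.

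For \emph{necessity}, assume $\alpha=\alpha\beta\alpha$ with $\beta\in\O(X)$. For (1), if $u\in X$ is an upper bound of $\im(\alpha)$, then every $y\in\im(\alpha)$ satisfies $y=\alpha(\beta(y))\le\alpha(\beta(u))$ by two applications of the order-preserving property, and $\alpha(\beta(u))\in\im(\alpha)$; hence $\max\im(\alpha)=\alpha(\beta(u))$. Condition (2) is dual. For (3), let $x\in X\setminus\im(\alpha)$ be neither an upper nor a lower bound of $\im(\alpha)$, and set $b:=\alpha(\beta(x))\in\im(\alpha)$; then $b\ne x$. If $b<x$, any $a\in\im(\alpha)$ with $a<x$ satisfies $\beta(a)\le\beta(x)$, hence $a=\alpha(\beta(a))\le\alpha(\beta(x))=b$, giving $b=\max\{a\in\im(\alpha)\mid a<x\}$. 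If $b>x$, a dual argument gives $b=\min\{a\in\im(\alpha)\mid x<a\}$.

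For \emph{sufficiency}, assume (1)--(3) hold, and construct $\beta\in\O(X)$ with $\alpha\beta\alpha=\alpha$. For each $y\in\im(\alpha)$ pick any $\beta(y)\in\alpha^{-1}(y)$; this assignment is automatically order-preserving on $\im(\alpha)$ because if $y_1<y_2$ in $\im(\alpha)$ and $\beta(y_2)\le\beta(y_1)$ then $y_2\le y_1$, a contradiction. Now extend $\beta$ to $X\setminus\im(\alpha)$: if $x$ is a lower bound of $\im(\alpha)$, set $\beta(x):=\beta(\min\im(\alpha))$, existing by (2); if $x$ is an upper bound, set $\beta(x):=\beta(\max\im(\alpha))$, existing by (1); otherwise set $\beta(x):=\beta(a_x)$ with $a_x:=\max\{a\in\im(\alpha)\mid a<x\}$ whenever this maximum exists, and $\beta(x):=\beta(b_x)$ with $b_x:=\min\{a\in\im(\alpha)\mid x<a\}$ otherwise, which is permitted by (3). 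That $\alpha\beta\alpha=\alpha$ is then immediate from $\alpha(\beta(y))=y$ on $\im(\alpha)$.

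The real work is showing that the extended $\beta$ is order-preserving. This reduces to a case analysis on pairs $x_1<x_2$ classified by the four types (image point, lower bound, upper bound, interior non-image); many combinations are excluded outright by $x_1<x_2$, and most remaining ones reduce to the trivial comparison $\beta(a)\le\beta(a')$ for $a\le a'$ in $\im(\alpha)$. The main obstacle is the subcase where both $x_1$ and $x_2$ are interior non-image, $a_{x_1}$ fails to exist and $a_{x_2}$ exists, so that $\beta(x_1)=\beta(b_{x_1})$ and $\beta(x_2)=\beta(a_{x_2})$. Here one must argue $a_{x_2}>x_1$: otherwise $a_{x_2}<x_1$ (as $x_1\notin\im(\alpha)$), and the non-existence of $a_{x_1}$ yields some $a'\in\im(\alpha)$ with $a_{x_2}<a'<x_1<x_2$, contradicting the maximality of $a_{x_2}$. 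Once $a_{x_2}>x_1$ is established, $b_{x_1}\le a_{x_2}$ by definition of $b_{x_1}$, and the required inequality $\beta(x_1)\le\beta(x_2)$ follows.
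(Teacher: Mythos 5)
This theorem is quoted in the paper from Mora and Kemprasit (it is their Theorem 2.4), and no proof of it appears in the paper itself, so there is nothing internal to compare your argument against; I can only assess your proof on its own merits, and it is correct and complete. The necessity direction is the standard computation with an inner inverse $\beta$ satisfying $\alpha\beta\alpha=\alpha$ (so that $\alpha(\beta(y))=y$ on $\im(\alpha)$), and in the sufficiency direction you correctly reduce order-preservation of the constructed $\beta$ to comparing the chosen ``proxy'' points of $x_1<x_2$ inside $\im(\alpha)$, where you rightly identify the only delicate subcase --- both points interior and non-image, with $\max\{a\in\im(\alpha)\mid a<x_1\}$ failing to exist while $a_{x_2}=\max\{a\in\im(\alpha)\mid a<x_2\}$ exists --- and resolve it properly: if $a_{x_2}<x_1$, the non-existence of the first maximum produces $a'\in\im(\alpha)$ with $a_{x_2}<a'<x_1<x_2$, contradicting maximality of $a_{x_2}$, whence $x_1<a_{x_2}$ and $b_{x_1}\le a_{x_2}$.
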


Based on this theorem,  Mora and Kemprasit \cite{Mora&Kemprasit:2010} deduced several previous known results. 
For instance, that $\O(\Z)$ is regular while $\O(\Q))$ and $\O(\R)$ are not regular, by considering their usual orders. 
See also \cite{Adams&Gould:1989,Aizenstat:1968,Kemprasit&Changphas:2000,Kim&Kozhukhov:2008}. 

Here arises a natural question: describe the maximal regular subsemigroups of $\O(X)$. Our first result applies Theorem \ref{regsOX} to answer this question. 

\begin{theorem}\label{regOX}
Let $X$ be any chain. Then $\reg(\O(X))$ is a subsemigroup of $\O(X)$. Consequently, $\reg(\O(X))$ is the largest regular subsemigroup of $\O(X)$. 
\end{theorem}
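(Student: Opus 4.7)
The plan is to show that $\reg(\O(X))$ is closed under composition, whence it is the largest regular subsemigroup. Fix $\alpha, \beta \in \reg(\O(X))$, set $\gamma = \alpha\beta$, and write $A = \im(\alpha)$, so that $\im(\gamma) = A\beta$. I would verify the three conditions of Theorem \ref{regsOX} for $\gamma$ in turn.

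First, for condition~(1), suppose $A\beta$ has an upper bound $u \in X$. If $A$ is itself bounded above in $X$, regularity of $\alpha$ furnishes $m = \max A$ and then $m\beta = \max A\beta$. Otherwise, the down-set $\{y \in X : y\beta \le u\}$ contains the cofinal subset $A$ and must therefore equal $X$; hence $u$ bounds $\im(\beta)$, so regularity of $\beta$ yields $M = \max \im(\beta) = v\beta$, and any $y \in A$ with $y \ge v$ (which exists by cofinality) satisfies $y\beta = M = \max A\beta$. Condition~(2) is entirely symmetric.

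Next, and most delicately, for condition~(3) let $x \in X \setminus A\beta$ be neither an upper nor a lower bound of $A\beta$, and partition $X = L_x \sqcup M_x \sqcup U_x$ according to whether $y\beta$ is less than, equal to, or greater than $x$. The hypotheses yield $A \cap L_x \neq \emptyset$, $A \cap U_x \neq \emptyset$, and $A \cap M_x = \emptyset$. In the case $M_x \neq \emptyset$, I would pick any $y \in M_x$; then $y \notin A$ and $y$ is neither an upper nor a lower bound of $A$, so condition~(3) for $\alpha$ at $y$ yields either $\max(A \cap L_x)$ or $\min(A \cap U_x)$, whose image under $\beta$ is the required extremum in $A\beta$. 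If instead $M_x = \emptyset$ (i.e. $x \notin \im(\beta)$), then $x$ is also neither an upper nor a lower bound of $\im(\beta) \supseteq A\beta$, so condition~(3) for $\beta$ at $x$ supplies, up to symmetry, $b^* = \max\{b \in \im(\beta) : b < x\}$. If $b^* \in A\beta$ we are done; otherwise any $w \in \beta^{-1}(b^*)$ lies outside $A$, and the maximality of $b^*$ together with $b^* \notin A\beta$ forces $A \cap L_x < w < A \cap U_x$, so applying condition~(3) to $\alpha$ at $w$ produces the desired extremal element of $A$, whose image under $\beta$ is the sought extremum in $A\beta$.

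Finally, the ``consequently'' clause will follow from the standard fact that any $\alpha \in \reg(\O(X))$ admits an inverse lying in $\reg(\O(X))$ (replace any $\alpha'$ satisfying $\alpha\alpha'\alpha = \alpha$ by $\alpha'\alpha\alpha'$), so $\reg(\O(X))$ is itself a regular subsemigroup and automatically contains every regular subsemigroup of $\O(X)$. The step I expect to be the main obstacle is the second subcase of condition~(3): when $x \notin \im(\beta)$ and $b^* \notin A\beta$, the witness supplied by regularity of $\beta$ fails to land in $A\beta$, and one must invoke condition~(3) a second time, now for $\alpha$ at a carefully chosen preimage of $b^*$, exploiting the interplay between the maximality of $b^*$ and the order-preserving property of $\beta$.
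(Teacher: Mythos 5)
Your proposal is correct and takes essentially the same route as the paper's proof: both verify the three conditions of Theorem \ref{regsOX} for $\alpha\beta$, splitting condition (3) into the cases $x\in\im(\beta)$ and $x\notin\im(\beta)$, and in the latter case applying condition (3) a second time, to $\alpha$ at a preimage of the extremal element $b^*$ of $\im(\beta)$ when $b^*\notin\im(\alpha\beta)$. The only differences are presentational (your cofinality argument for condition (1) versus the paper's argument by contradiction, and your explicit justification of the ``consequently'' clause, which the paper leaves implicit).
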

\begin{proof}
Let $\alpha$ and $\beta$ be two regular elements of $\O(X)$. 

Assume that $\im(\alpha\beta)$ has an upper bound $x\in X$. 
If both $\im(\alpha)$ and $\im(\beta)$ have no upper bounds in $X$, 
then there exist $a,b\in X$ such that $b\beta>x$ and $a\alpha>b$, 
whence $a\alpha\beta\ge b\beta>x$, which is a contradiction. 
Hence, $\im(\alpha)$ or $\im(\beta)$ has an upper bound in $X$. 
If $\im(\alpha)$ has an upper bound in $X$ then, by Theorem \ref{regsOX}, there exists $m=\max(\im(\alpha))$ and so, 
clearly, we have $m\beta=\max((\im(\alpha)\beta)=\max(\im(\alpha\beta))$. 
On the other hand, suppose that $\im(\alpha)$ has no upper bounds in $X$. 
Then $\im(\beta)$ must have an upper bound in $X$ and so, 
by Theorem \ref{regsOX}, there exists $m=\max(\im(\beta))$. 
Let $a, b\in X$ be such that $b\beta=m$ and $a\alpha > b$. 
Then $a\alpha\beta\ge b\beta=m$, whence $a\alpha\beta= b\beta=m$ and so we also have 
$m=\max(\im(\alpha\beta))$. Thus, in all cases, we proved that $\max(\im(\alpha\beta))$ exists. 

Dually, assuming that $\im(\alpha\beta)$ has a lower bound in $X$, we 
may show that $\min(\im(\alpha\beta))$ exists. 

Now, suppose that $x\in X\setminus\im(\alpha\beta)$ is neither an upper bound nor a lower bound of $\im(\alpha\beta)$. Let $a,b\in X$ be such that $a\alpha\beta<x<b\alpha\beta$. 
Let us consider two cases: $x\in\im(\beta)$ and $x\in X\setminus\im(\beta)$. 

First, suppose that $x\in\im(\beta)$. Then $x=y\beta$, for some $y\in X$. 
Since $a\alpha\beta<x<b\alpha\beta$, we may deduce that $a\alpha<y<b\alpha$, 
whence $y$ is neither an upper bound nor a lower bound of $\im(\alpha)$.  
Moreover, as $x\in X\setminus\im(\alpha\beta)$, then $y\in X\setminus\im(\alpha)$. 
Hence, by Theorem \ref{regsOX}, either $\max\{t\in\im(\alpha)\mid t<y\}$ or $\min\{t\in\im(\alpha)\mid y<t\}$ exists. 
Clearly, it follows that 
$(\max\{t\in\im(\alpha)\mid t<y\})\beta=\max(\{t\in\im(\alpha)\mid t<y\}\beta)=\max\{z\in\im(\alpha\beta)\mid z<x\}$ 
or $(\min\{t\in\im(\alpha)\mid y<t\})\beta=\min(\{t\in\im(\alpha)\mid y<t\}\beta)=\min\{z\in\im(\alpha\beta)\mid x<z\}$ exists.

Secondly, suppose that $x\in X\setminus\im(\beta)$. 
Since $(a\alpha)\beta<x<(b\alpha)\beta$, then $x$ is neither an upper bound nor a lower bound of $\im(\beta)$ and so, 
by Theorem \ref{regsOX}, either $\max\{z\in\im(\beta)\mid z<x\}$ or $\min\{z\in\im(\beta)\mid x<z\}$ exists.   

Assume that $m=\max\{z\in\im(\beta)\mid z<x\}$ exists. Notice that $m\in\im(\beta)$ and $m<x$. Let $y\in X$ be such that $m=y\beta$.  

If $m\in\im(\alpha\beta)$, since $\im(\alpha\beta)\subseteq\im(\beta)$, we also have  
$m=\max\{z\in\im(\alpha\beta)\mid z<x\}$. 

On the other hand, suppose that $m\not\in\im(\alpha\beta)$. Then $y\not\in\im(\alpha)$. 
If $y\le a\alpha$ then $m=y\beta\le a\alpha\beta < x$ and so, 
since $a\alpha\beta\in\im(\beta)$, we have $m=y\beta=a\alpha\beta\in\im(\alpha\beta)$, 
which is a contradiction. Thus $a\alpha < y$. If $b\alpha \le y$ then $b\alpha\beta\le y\beta=m < x$, which is also a contradiction. Therefore $a\alpha < y < b\alpha$ and so 
$y\in X\setminus\im(\alpha)$ is neither an upper bound nor a lower bound of $\im(\alpha)$.
Hence, by Theorem \ref{regsOX}, either $\max\{t\in\im(\alpha)\mid t<y\}$ or $\min\{t\in\im(\alpha)\mid y<t\}$ exists. 
As above, it follows that 
$(\max\{t\in\im(\alpha)\mid t<y\})\beta=\max(\{t\in\im(\alpha)\mid t<y\}\beta)=\max\{z\in\im(\alpha\beta)\mid z<m\}$ 
or $(\min\{t\in\im(\alpha)\mid y<t\})\beta=\min(\{t\in\im(\alpha)\mid y<t\}\beta)=\min\{z\in\im(\alpha\beta)\mid m<z\}$ exists. 

Now, first of all suppose that $m'=\max\{z\in\im(\alpha\beta)\mid z<m\}$ exists. 
Let $z\in\im(\alpha\beta)$ be such that $z<x$. Then $z\le m$ and so $z<m$, since $m\not\in\im(\alpha\beta)$. It follows that $z\le m'$. As $m'\in\im(\alpha\beta)$, 
we proved that we also have $m'=\max\{z\in\im(\alpha\beta)\mid z<x\}$. 

Secondly, suppose that $m'=\min\{z\in\im(\alpha\beta)\mid m<z\}$. 
Then $m'\in\im(\alpha\beta)$ and $m < m'$.  
If $m'< x$ then $m'\le m$, which is a contradiction. Thus $x\le m'$ and, in fact, 
$x < m'$, since $x\not\in\im(\alpha\beta)$. 
So $m'\in \{z\in\im(\alpha\beta)\mid x<z\}$. 
Next, let $z\in\im(\alpha\beta)$ be such that $x<z$.
Since $m<x$, it follows that $m<z$, whence $m'\le z$. 
So $m'=\min\{z\in\im(\alpha\beta)\mid x<z\}$. 

Therefore, assuming that $\max\{z\in\im(\beta)\mid z<x\}$ exists, we proved that either 
$\max\{z\in\im(\alpha\beta)\mid z<x\}$ or $\min\{z\in\im(\alpha\beta)\mid x<z\}$ exists. 
Dually, assuming that $\min\{z\in\im(\beta)\mid x<z\}$ exists, we may reach the same conclusion, as required. 
\end{proof}

Now, let $\F(X,Y)=\{\alpha\in\T(X,Y)\mid \im(\alpha)\subseteq Y\alpha\}=\{\alpha\in\T(X,Y)\mid \im(\alpha)=Y\alpha\}$. Clearly, 
the set $\F(X,Y)$ is a right ideal of $\T(X,Y)$. Moreover, it is easy to show that $\F(X,Y)=\reg(\T(X,Y))$ and so it is the largest regular subsemigroup of $\T(X,Y)$.  
See \cite{Sanwong&Sommanee:2008} (and also \cite{Nenthein&et.al:2005}). 

Also in \cite[Theorem 3.1]{Mora&Kemprasit:2010} Mora and Kemprasit showed:

\begin{theorem}\label{regOXY}
Let $X$ be any chain and let $Y$ be a nonempty subset of $X$. Then 
$$
\reg(\O(X,Y))=\reg(\T(X,Y))\cap\reg(\O(X))=\{\alpha\in\reg(\O(X))\mid \im(\alpha)=Y\alpha\subseteq Y\}.
$$
\end{theorem}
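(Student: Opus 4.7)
The plan is to prove two inclusions and then observe that the equality $\reg(\T(X,Y)) \cap \reg(\O(X)) = \{\alpha \in \reg(\O(X)) \mid \im(\alpha) = Y\alpha \subseteq Y\}$ is immediate from the characterization $\reg(\T(X,Y)) = \F(X,Y)$ recalled just before the theorem.

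The easy direction, $\reg(\O(X,Y)) \subseteq \reg(\T(X,Y)) \cap \reg(\O(X))$, requires nothing beyond noting that if $\beta \in \O(X,Y)$ witnesses the regularity of $\alpha$ in $\O(X,Y)$, then, since $\O(X,Y) \subseteq \O(X) \cap \T(X,Y)$, the same $\beta$ witnesses $\alpha \in \reg(\O(X))$ and $\alpha \in \reg(\T(X,Y))$ simultaneously.

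The reverse inclusion is the substantive part. Given $\alpha \in \reg(\O(X))$ with $\im(\alpha) = Y\alpha \subseteq Y$, I would construct $\beta \in \O(X,Y)$ satisfying $\alpha\beta\alpha = \alpha$ by hand. For each $a \in \im(\alpha)$, the hypothesis $\im(\alpha) = Y\alpha$ provides an element $y_a \in Y$ with $y_a\alpha = a$; these preimages can be chosen so that $a \mapsto y_a$ is strictly increasing, because if $a_1 < a_2$ in $\im(\alpha)$ then any choices $y_{a_1},y_{a_2}\in Y$ with $y_{a_i}\alpha = a_i$ must satisfy $y_{a_1}<y_{a_2}$ (otherwise order-preservation of $\alpha$ on the chain $X$ forces $a_1\ge a_2$). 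Set $a\beta = y_a$ on $\im(\alpha)$, and extend $\beta$ to $X\setminus\im(\alpha)$ by invoking the three conditions of Theorem \ref{regsOX} applied to $\alpha$: when $x$ is an upper bound of $\im(\alpha)$, put $x\beta = y_{M}$ with $M = \max\im(\alpha)$ (which exists by the theorem); symmetrically when $x$ is a lower bound; and when $x$ is neither, put $x\beta = y_{a}$ where $a$ is whichever of $\max\{a'\in\im(\alpha)\mid a'<x\}$ or $\min\{a'\in\im(\alpha)\mid x<a'\}$ is provided by the theorem (fixing a preference when both exist).

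The identity $\alpha\beta\alpha = \alpha$ is then immediate: for any $x \in X$ we have $x\alpha \in \im(\alpha)$, so $(x\alpha)\beta = y_{x\alpha}$ and $(y_{x\alpha})\alpha = x\alpha$. The main obstacle is verifying that this piecewise-defined $\beta$ is order-preserving (it manifestly has values in $Y$, so that it then lies in $\O(X,Y)$). The core point is that for $x_1<x_2$ the distinguished elements $a_1,a_2\in\im(\alpha)$ selected by the construction satisfy $a_1\le a_2$, which reduces to the monotonicity in $x$ of $L(x)=\{a\in\im(\alpha)\mid a<x\}$ and $U(x)=\{a\in\im(\alpha)\mid x<a\}$, together with a short case analysis that interleaves points of $\im(\alpha)$ and of $X\setminus\im(\alpha)$ and handles the mixed situation in which $a_1$ comes from a $U$-choice while $a_2$ comes from an $L$-choice (here one uses that $a_1\in U(x_1)$ together with $x_1<x_2$ and $a_2\ge$ every element of $L(x_2)\supseteq L(x_1)$ to force $a_1\le a_2$). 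Once this monotonicity is established, $\beta\in\O(X,Y)$ and hence $\alpha\in\reg(\O(X,Y))$, completing the argument.
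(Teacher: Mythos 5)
The paper itself does not prove this statement: Theorem \ref{regOXY} is quoted from Mora and Kemprasit \cite{Mora&Kemprasit:2010} without proof, so there is no in-paper argument to compare against. Judged on its own, your strategy is the natural one and is essentially sound: the inclusion $\reg(\O(X,Y))\subseteq\reg(\T(X,Y))\cap\reg(\O(X))$ is indeed trivial, the identification of the intersection with $\{\alpha\in\reg(\O(X))\mid \im(\alpha)=Y\alpha\subseteq Y\}$ follows from $\reg(\T(X,Y))=\F(X,Y)$, and the converse inclusion requires exactly the explicit inner inverse $\beta$ you build, with values $y_a\in Y\cap a\alpha^{-1}$ and the extension to $X\setminus\im(\alpha)$ governed by the three conditions of Theorem \ref{regsOX}. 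The automatic strict monotonicity of $a\mapsto y_a$ is correct, as is $\alpha\beta\alpha=\alpha$.

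There is, however, one genuine (though repairable) gap, in the mixed case of the monotonicity check. You assert that $a_1\in U(x_1)$, $x_1<x_2$ and ``$a_2\ge$ every element of $L(x_2)$'' force $a_1\le a_2$. They do not: take $\im(\alpha)=\{0,10\}$, $x_1=3$, $x_2=5$; then $a_1=\min U(x_1)=10$ and $a_2=\max L(x_2)=0$ satisfy all three hypotheses, yet $a_1>a_2$. Your listed facts yield $a_1\le a_2$ only when $a_1<x_2$, so that $a_1\in L(x_2)$. When $a_1\ge x_2$ the conclusion must instead come from the \emph{consistency} of the preference: if you decree globally that, say, $\max L(x)$ is chosen whenever it exists, then $a_1=\min U(x_1)\ge x_2$ forces $\im(\alpha)\cap\left]x_1,x_2\right[=\emptyset$ and $x_1\notin\im(\alpha)$, hence $L(x_2)=L(x_1)$, which has no maximum, so $a_2$ cannot be an $L$-choice; and $x_2\in\im(\alpha)$ would put $x_2$ in $U(x_1)$ and give $a_1=x_2=a_2$. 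Thus the offending configuration cannot occur. Without a globally consistent preference the construction really can fail to be order-preserving (two points of the same gap of $\im(\alpha)$ could be sent to opposite ends of that gap), so this is not a purely cosmetic omission. Add the consistent-choice convention and the short argument above, and the proof is complete.
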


Since $\F(X,Y)$ is a subsemigroup of $\T(X,Y)$, in view of Theorems \ref{regOX} and \ref{regOXY}, we have the following immediate corollary. 

\begin{corollary}
Let $X$ be any chain and let $Y$ be a nonempty subset of $X$. Then 
$\reg(\O(X,Y))$ is the largest regular subsemigroup of $\O(X,Y)$. 
\end{corollary}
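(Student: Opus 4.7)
The plan is almost entirely bookkeeping, since all the work has been done in Theorems \ref{regOX} and \ref{regOXY}. First I would argue that $\reg(\O(X,Y))$ is closed under composition by intersecting two subsemigroups: by Theorem \ref{regOXY},
$$
\reg(\O(X,Y)) = \reg(\T(X,Y))\cap\reg(\O(X));
$$
the first factor equals $\F(X,Y)$, which the paper has already noted is a subsemigroup of $\T(X,Y)$, and the second factor is a subsemigroup of $\O(X)$ by Theorem \ref{regOX}. Their intersection inside $\O(X,Y)$ is therefore a subsemigroup of $\O(X,Y)$.

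Next I would check that this subsemigroup is regular as a semigroup in its own right. Given $\alpha\in\reg(\O(X,Y))$, pick $\beta\in\O(X,Y)$ with $\alpha\beta\alpha=\alpha$. The element $\beta'=\beta\alpha\beta\in\O(X,Y)$ satisfies $\alpha\beta'\alpha=\alpha$ and $\beta'\alpha\beta'=\beta'$, and this second identity shows $\beta'\in\reg(\O(X,Y))$. Hence $\alpha$ admits an inverse lying inside $\reg(\O(X,Y))$, so $\reg(\O(X,Y))$ is regular.

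For maximality I would observe the following routine fact: if $S$ is any regular subsemigroup of $\O(X,Y)$, then every $\alpha\in S$ has an inverse $\beta\in S\subseteq\O(X,Y)$, so $\alpha$ is a regular element of $\O(X,Y)$, that is, $\alpha\in\reg(\O(X,Y))$. Therefore $S\subseteq\reg(\O(X,Y))$, and the corollary follows.

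There is no real obstacle here: the only substantive point is the closure of $\reg(\O(X,Y))$ under composition, and that closure is handed to us by Theorems \ref{regOX} and \ref{regOXY} together with the known fact that $\F(X,Y)=\reg(\T(X,Y))$ is a subsemigroup of $\T(X,Y)$. The rest is the standard semigroup-theoretic observation that a subsemigroup of regular elements which is itself closed under composition is the largest regular subsemigroup.
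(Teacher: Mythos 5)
Your proposal is correct and follows exactly the route the paper intends: the paper states the corollary as immediate from Theorem \ref{regOXY} (writing $\reg(\O(X,Y))$ as the intersection $\reg(\T(X,Y))\cap\reg(\O(X))$ of two subsemigroups, the first being $\F(X,Y)$ and the second handled by Theorem \ref{regOX}), and you have simply written out the standard details that the paper leaves implicit. The closure, internal regularity via $\beta'=\beta\alpha\beta$, and maximality arguments are all sound.
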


Consider the subset 
$$
\FO(X,Y)=\{\alpha\in\O(X,Y)\mid \im(\alpha)\subseteq Y\alpha\}=\{\alpha\in\O(X,Y)\mid \im(\alpha)=Y\alpha\}
$$ 
of $\O(X,Y)$. 
It is clear that $\FO(X,Y)$ is a right ideal of $\O(X,Y)$ containing $\reg(\O(X,Y))$. 
We may wonder whether $\reg(\O(X,Y))=\FO(X,Y)$. Naturally, if $\O(X)$ is regular 
then trivially the equality holds (see also \cite[Theorem 3.6]{Mora&Kemprasit:2010}). However, this is not the case in general. For instance, consider $X=\R$ equipped with the usual order, $Y=\left]-\infty,0\right]$ and $\alpha\in\T(X)$ defined by 
$$
x\alpha=\left\{
\begin{array}{ll}
e^x-1 & \mbox{if $x\le0$}\\
0 & \mbox{if $x>0$.}
\end{array}
\right. 
$$
Then $\alpha\in\O(X)$ and $\im(\alpha)=\left]-1,0\right]=Y\alpha\subseteq Y$, whence $\alpha\in\FO(X,Y)$. On the other hand, $\im(\alpha)$ has lower bounds in $X$ but no minimum. Thus $\alpha\not\in\reg(\O(X,Y))$. 

\bigskip

Now, recall the well known descriptions of Green's relations $\mathscr{L}$, 
$\mathscr{R}$ and $\mathscr{D}$ on $\T(X)$ (see e.g. \cite[Page 63]{Howie:1995}): 
\begin{enumerate}
\item $\alpha\mathscr{L}\beta$ if and only if $\im(\alpha)=\im(\beta)$,  
\item $\alpha\mathscr{R}\beta$ if and only if $\ker(\alpha)=\ker(\beta)$ and 
\item $\alpha\mathscr{D}\beta$ if and only if $|\im(\alpha)|=|\im(\beta)|$, 
\end{enumerate}
for all $\alpha,\beta\in\T(X)$. Moreover, in $\T(X)$, we have $\mathscr{J}=\mathscr{D}$.  

\medskip 

Next, we present descriptions for Green's relations on $\O(X,Y)$ 
and, in particular, on $\O(X)$. 
We start with $\mathcal{L}$. First, observe that 
Sanwong and Sommanee \cite[Theorem 3.2]{Sanwong&Sommanee:2008} showed that, 
for $\alpha,\beta\in\T(X,Y)$, we have $\alpha\mathscr{L}\beta$ in $\T(X,Y)$ if and only if 
either $\alpha=\beta$ or $\alpha,\beta\in\F(X,Y)$ and $\im(\alpha)=\im(\beta)$. 
An analogous result holds for $\O(X,Y)$. 

\begin{proposition}\label{LOXY}
Let $X$ be a chain and let $Y$ be a nonempty subset of $X$.  
Then, for all $\alpha,\beta\in\O(X,Y)$, 
we have $\alpha\mathscr{L}\beta$ in $\O(X,Y)$ if and only if 
either $\alpha=\beta$ or $\alpha,\beta\in\FO(X,Y)$ and $\im(\alpha)=\im(\beta)$. 
\end{proposition}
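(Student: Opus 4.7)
The plan is to prove both directions directly from the definition $\alpha\mathscr{L}\beta \iff \O(X,Y)^1\alpha = \O(X,Y)^1\beta$, mimicking the argument of Sanwong and Sommanee for $\T(X,Y)$ but paying attention to order-preservation. The easy direction is the ``only if'' part, while the construction of an explicit left multiplier in $\O(X,Y)$ in the ``if'' part is the main technical point.

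For the ``only if'' direction, suppose $\alpha\mathscr{L}\beta$ and $\alpha\neq\beta$. Then there exist $\gamma,\delta\in\O(X,Y)^1$ with $\gamma\beta=\alpha$ and $\delta\alpha=\beta$, and neither $\gamma$ nor $\delta$ can be the adjoined identity (otherwise $\alpha=\beta$), so both lie in $\O(X,Y)$. Since $\im(\gamma)\subseteq Y$, we obtain $\im(\alpha)=\im(\gamma\beta)=\im(\gamma)\beta\subseteq Y\beta\subseteq\im(\beta)$, and symmetrically $\im(\beta)\subseteq\im(\alpha)$, giving $\im(\alpha)=\im(\beta)$. The inclusion $\im(\beta)=\im(\alpha)\subseteq Y\beta\subseteq\im(\beta)$ then forces $\im(\beta)=Y\beta$, so $\beta\in\FO(X,Y)$, and symmetrically $\alpha\in\FO(X,Y)$.

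For the ``if'' direction, the case $\alpha=\beta$ is trivial, so assume $\alpha,\beta\in\FO(X,Y)$ with $\im(\alpha)=\im(\beta)$. I will explicitly build $\gamma\in\O(X,Y)$ with $\gamma\beta=\alpha$; the map $\delta$ with $\delta\alpha=\beta$ is obtained by the symmetric construction. The idea is to use the hypothesis $\im(\beta)=Y\beta$: for every $z\in\im(\alpha)=\im(\beta)$, the set $Y_z=\{y\in Y\mid y\beta=z\}$ is nonempty, so I can pick $y_z\in Y_z$ once and for all, and then define $\gamma$ by $x\gamma=y_{x\alpha}$. By construction $\im(\gamma)\subseteq Y$ and $(x\gamma)\beta=y_{x\alpha}\beta=x\alpha$, so $\gamma\beta=\alpha$; the only thing left is order-preservation.

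The main (small) obstacle is precisely showing that $\gamma$ is order-preserving, since the selection $z\mapsto y_z$ was arbitrary. The key observation is that the fibres $Y_z$ themselves are totally ordered along $\im(\beta)$: if $z_1<z_2$ and $y_i\in Y_{z_i}$, then $y_2<y_1$ would give $z_2=y_2\beta\le y_1\beta=z_1$ by order-preservation of $\beta$, a contradiction, so every element of $Y_{z_1}$ lies strictly below every element of $Y_{z_2}$. Given $x_1\le x_2$ in $X$, order-preservation of $\alpha$ gives $x_1\alpha\le x_2\alpha$: if equality holds then $x_1\gamma=x_2\gamma$, and if $x_1\alpha<x_2\alpha$ then the fibre comparison yields $y_{x_1\alpha}<y_{x_2\alpha}$, i.e.\ $x_1\gamma<x_2\gamma$. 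Hence $\gamma\in\O(X,Y)$ as required, and $\alpha\mathscr{L}\beta$ in $\O(X,Y)$.
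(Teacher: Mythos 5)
Your proof is correct and follows essentially the same route as the paper: the converse direction uses the identical construction (choose $u_a\in a\beta^{-1}\cap Y$ for each $a\in\im(\alpha)$, set $x\gamma=u_{x\alpha}$, and verify order-preservation by the same contradiction argument). The only cosmetic difference is that for the direct implication the paper simply quotes the Sanwong--Sommanee description of $\mathscr{L}$ in $\T(X,Y)$, whereas you rederive that short computation inline.
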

\begin{proof} 
Suppose $\alpha\mathscr{L}\beta$ in $\O(X,Y)$. 
Since $\alpha\mathscr{L}\beta$ in $\O(X,Y)$ implies $\alpha\mathscr{L}\beta$ in $\T(X,Y)$, 
if $\alpha\ne\beta$ then $\alpha,\beta\in\F(X,Y)$ and $\im(\alpha)=\im(\beta)$. 
Hence, either $\alpha=\beta$ or $\alpha,\beta\in\FO(X,Y)$ and $\im(\alpha)=\im(\beta)$.  

Conversely, assume that $\alpha,\beta\in\FO(X,Y)$ and $\im(\alpha)\subseteq\im(\beta)$. 
For each $a\in\im(\alpha)$ choose an element $u_a\in a\beta^{-1}\cap Y$. 
Define a transformation $\gamma$ of $X$ by $x\gamma=u_{x\alpha}$, for all $x\in X$. 
Hence, for all $x\in X$, we have $x\gamma\beta=u_{x\alpha}\beta=x\alpha$, 
i.e. $\alpha=\gamma\beta$. Clearly, we also have $\im(\gamma)\subseteq Y$. 
Furthermore, $\gamma\in\O(X,Y)$. 
In fact, let $x,y\in X$ be such that $x\le y$. 
Then $x\alpha\le y\alpha$. If $u_{x\alpha}\ge u_{y\alpha}$ then 
$x\alpha=u_{x\alpha}\beta\ge u_{y\alpha}\beta=y\alpha$, 
whence $x\alpha=y\alpha$ and so $u_{x\alpha}=u_{y\alpha}$.
Thus $x\gamma=u_{x\alpha}\le u_{y\alpha}=y\gamma$. 

Similarly, by assuming that $\alpha,\beta\in\FO(X,Y)$ and  $\im(\beta)\subseteq\im(\alpha)$, we may show the existence of a transformation $\lambda\in\O(X,Y)$ such that $\beta=\lambda\alpha$. Therefore, $\alpha,\beta\in\FO(X,Y)$ and $\im(\alpha)=\im(\beta)$ implies $\alpha\mathscr{L}\beta$ in $\O(X,Y)$, as required. 
\end{proof}

In particular for $Y=X$, a simpler statement can be presented:  

\begin{corollary}\label{LOX}
Let $X$ be a chain and let $\alpha,\beta\in\O(X)$. 
Then $\alpha\mathscr{L}\beta$ in $\O(X)$ if and only if $\im(\alpha)=\im(\beta)$.
\end{corollary}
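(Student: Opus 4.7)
The plan is to deduce this corollary directly from Proposition \ref{LOXY} by specializing $Y=X$. The first step is the observation that, when $Y=X$, we have $Y\alpha = X\alpha = \im(\alpha)$ for every $\alpha\in\O(X)$, so the defining condition $\im(\alpha)\subseteq Y\alpha$ of $\FO(X,Y)$ is automatically satisfied. Consequently $\FO(X,X) = \O(X)$, and the dichotomy in the statement of Proposition \ref{LOXY} collapses.

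More precisely, for $\alpha,\beta\in\O(X)$, Proposition \ref{LOXY} asserts that $\alpha\mathscr{L}\beta$ in $\O(X)$ if and only if either $\alpha=\beta$, or $\alpha,\beta\in\FO(X,X)$ and $\im(\alpha)=\im(\beta)$. Since every element of $\O(X)$ lies in $\FO(X,X)$, the second alternative simplifies to $\im(\alpha)=\im(\beta)$; and since $\alpha=\beta$ trivially implies $\im(\alpha)=\im(\beta)$, the first alternative is absorbed into the second. Hence $\alpha\mathscr{L}\beta$ in $\O(X)$ if and only if $\im(\alpha)=\im(\beta)$.

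There is essentially no obstacle to overcome here: the corollary is a clean specialization of the preceding proposition, whose proof already handled both directions (necessity via the inclusion $\mathscr{L}_{\O(X,Y)}\subseteq\mathscr{L}_{\T(X,Y)}$, and sufficiency via the explicit construction of the transformations $\gamma$ and $\lambda$). The only minor point worth making explicit in the write-up is the identity $\FO(X,X)=\O(X)$, which is what makes the statement in this case independent of the regularity subtleties that complicated the general Proposition \ref{LOXY}.
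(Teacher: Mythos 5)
Your proposal is correct and is exactly the argument the paper intends: the corollary is presented as the specialization $Y=X$ of Proposition \ref{LOXY}, using the identity $\FO(X,X)=\O(X)$ (since $X\alpha=\im(\alpha)$) to collapse the dichotomy, with the case $\alpha=\beta$ absorbed because equal maps have equal images. Nothing further is needed.
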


Notice that relation $\mathcal{L}$ in $\O(X)$ is just the restriction of relation $\mathcal{L}$ in $\T(X)$, despite $\O(X)$ may be non-regular. 

Before presenting a description for relation $\mathcal{R}$, we introduce the notion of completable order-preserving function and provide an alternative characterization, which helps to understand its nature. 

We say that an order-preserving function $\theta: A\longrightarrow B$ from a subchain $A$ of $X$ into a subchain $B$ of $Y$ is \textit{completable} in $\O(X,Y)$ if there exists $\gamma\in\O(X,Y)$ such $a\gamma=a\theta$, 
for all $a\in A$. To such $\gamma\in\O(X,Y)$ (not necessarily unique) 
we designate \textit{a complete extension} of $\theta$ in $\O(X,Y)$.  
An order-isomorphism $\theta: A\longrightarrow B$, with $A$ and $B$ two subchains of $Y$, is said to be \textit{bicompletable} in $\O(X,Y)$ if both $\theta$ and its inverse 
$\theta^{-1}: B\longrightarrow A$ are completable in $\O(X,Y)$. 

Observe that, an order-isomorphism between two subchains may be completable but not bicompletable. 
For example, with $Y=X=\R$ equipped with the usual order, in $\O(\R)$ the order-isomorphism 
$\R\longrightarrow \left]0,+\infty\right[$, $x\mapsto e^x$, is trivially completable while its inverse $\left]0,+\infty\right[\longrightarrow\R $, $x\mapsto \log(x)$, is clearly non-completable. 

Recall that a subset $I$ of $X$ (including the empty set) is called an order ideal of $X$ if $x\le a$ implies $x\in I$, for all $x\in X$ and all $a\in I$. 
The following characterization of completable order-preserving functions may be useful in practice. 

\begin{proposition}\label{compl}
Let $X$ be a chain and let $Y$ be a nonempty subset of $X$.  
Let $A$ be a subchain of $X$. 
An order-preserving function $\theta: A\longrightarrow Y$ is completable in $\O(X,Y)$ if and only if $\{x\in X\mid a<x<b, ~\mbox{for all $a\in I$ and $b\in A\setminus I$}\}\neq\emptyset$ implies 
$\{y\in Y\mid a\theta\le y\le b\theta, ~\mbox{for all $a\in I$ and $b\in A\setminus I$}\}\neq\emptyset$, for all order ideal $I$ of $A$. 
\end{proposition}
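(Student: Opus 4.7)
The plan is to handle the two directions separately, with the substantive work in the sufficiency direction.

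For necessity, suppose $\theta$ has a complete extension $\gamma \in \O(X,Y)$, fix an order ideal $I$ of $A$, and assume there exists $x_0 \in X$ with $a < x_0 < b$ for all $a \in I$ and $b \in A \setminus I$. Since $\gamma \in \O(X,Y)$, the value $y_0 = x_0\gamma$ lies in $Y$, and from the order-preservation of $\gamma$ together with $\gamma|_A = \theta$ we read off $a\theta \le y_0 \le b\theta$ for all such $a,b$, so $y_0$ witnesses nonemptiness of the $Y$-side gap.

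For sufficiency, I would construct an extension $\gamma \in \O(X,Y)$ explicitly. For each $x \in X$ set $I_x = \{a \in A \mid a < x\}$, which is visibly an order ideal of $A$. When $x \in X \setminus A$, the element $x$ itself lies strictly between $I_x$ and $A \setminus I_x$, so the hypothesis supplies some $y \in Y$ satisfying $a\theta \le y \le b\theta$ for all $a \in I_x$ and $b \in A \setminus I_x$. Using the Axiom of Choice, fix one such value $y_I \in Y$ for each order ideal $I$ of $A$ whose $X$-side gap is nonempty, and define
\[
x\gamma = \begin{cases} x\theta & \text{if } x \in A, \\ y_{I_x} & \text{if } x \in X \setminus A. \end{cases}
\]
By construction $\gamma$ extends $\theta$ and $\im(\gamma) \subseteq Y$.

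The remaining task is to verify order-preservation. When both $x_1, x_2$ lie in $A$ this follows from $\theta$. When exactly one lies in $A$, say $x_1 \in A$ and $x_2 \in X \setminus A$ with $x_1 < x_2$, the relation $x_1 \in I_{x_2}$ yields $x_1\gamma = x_1\theta \le y_{I_{x_2}} = x_2\gamma$ by the defining inequality of $y_{I_{x_2}}$; the symmetric subcase is analogous. The main obstacle is the case $x_1, x_2 \in X \setminus A$ with $x_1 < x_2$: if $I_{x_1} = I_{x_2}$ both values equal $y_{I_{x_1}}$, while if $I_{x_1} \subsetneq I_{x_2}$ I would pick $a \in I_{x_2} \setminus I_{x_1}$ and note that $a \in A$ must satisfy $x_1 < a < x_2$ (from $a \notin I_{x_1}$ and $x_1 \notin A$ we get $a > x_1$, while $a \in I_{x_2}$ gives $a < x_2$); the two defining inequalities of $y_{I_{x_1}}$ and $y_{I_{x_2}}$ then yield $x_1\gamma = y_{I_{x_1}} \le a\theta \le y_{I_{x_2}} = x_2\gamma$, finishing the argument.
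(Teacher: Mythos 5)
Your proof is correct and follows essentially the same route as the paper: the same witness argument for necessity (which the paper dismisses as clear), and for sufficiency the identical construction via the order ideals $I_x=\{a\in A\mid a<x\}$, a choice of $y_I$ in each nonempty $Y$-side gap, and the same four-case verification of order-preservation.
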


Observe that, 
if $I=\emptyset$ (respectively, $I=A$), the set 
$$\{x\in X\mid a<x<b, ~\mbox{for all $a\in I$ and $b\in A\setminus I$}\}$$ 
should naturally be understood as $\{x\in X\mid x<b, ~\mbox{for all $b\in A$}\}$ 
(respectively, $\{x\in X\mid a<x, ~\mbox{for all $a\in A$}\}$). 
For the set $\{y\in Y\mid a\theta\le y\le b\theta, ~\mbox{for all $a\in I$ and $b\in A\setminus I$}\}$ we make similar assumptions. 

\begin{proof}[Proof of Proposition \ref{compl}]
The direct implication is clear. In order to prove the converse implication, 
for each order ideal $I$ of $A$ such that the set $\{x\in X\mid a<x<b, ~\mbox{for all $a\in I$ and $b\in A\setminus I$}\}$ is nonempty, choose an element  
$v_I$ belonging to the set $$\{y\in Y\mid a\theta\le y\le b\theta, 
~\mbox{for all $a\in I$ and $b\in A\setminus I$}\}.$$ 

Let $x\in X\setminus A$ and let $I_x=\{a\in A\mid a < x\}$. Clearly, $I_x$ is an order ideal of $A$ and $a < x < b$, for all $a\in I_x$ and $b\in A\setminus I_x$. 
Thus, we have an element $v_{I_x}\in Y$ verifying $a\theta\le v_{I_x}\le b\theta$, 
for all $a\in I_x$ and $b\in A\setminus I_x$. 

Define a transformation $\gamma$ of $X$ by 
$$
x\gamma=\left\{
\begin{array}{ll}
x\theta & \mbox{if $x\in A$}\\
v_{I_x} & \mbox{if $x\in X\setminus A$.}
\end{array}
\right. 
$$
Clearly, if $\gamma\in\O(X,Y)$ then $\gamma$ is a complete extension of $\theta$. 
Thus, let us take $x,y\in X$ such that $x\le y$. Next, we consider four cases. 
If $x,y\in A$, then $x\gamma=x\theta\le y\theta=y\gamma$. 
If $x\in X\setminus A$ and $y\in A$, then $y\in A\setminus I_x$ and so 
$x\gamma=v_{I_x}\le y\theta=y\gamma$. 
If $x\in A$ and $y\in X\setminus A$, then $x\in I_y$, whence 
$x\gamma=x\theta\le v_{I_y}=y\gamma$. Finally, suppose that $x,y\in X\setminus A$. 
Then, we have $I_x\subseteq I_y$. If $I_x=I_y$ then, trivially, $x\gamma=v_{I_x}=v_{I_y}=y\gamma$. If  $I_x\subsetneq I_y$ then we may take $a\in I_y\setminus I_x$ and we have $x\gamma=v_{I_x}\le a\theta\le v_{I_y}=y\gamma$. 
Thus, we proved that $\gamma\in\O(X)$.
Since, clearly, $\im(\gamma)\subseteq Y$, we have $\gamma\in\O(X,Y)$, 
as required. 
\end{proof}

Now, observe that, given $\alpha,\beta\in\O(X)$ such that $\alpha\mathscr{R}\beta$ in $\O(X)$, then $\alpha\mathscr{R}\beta$ in $\T(X)$ and so  $\ker(\alpha)=\ker(\beta)$.  
On the other hand, in \cite[Theorem 3.3]{Sanwong&Sommanee:2008} Sanwong and Sommanee showed that the relation $\mathcal{R}$ in $\T(X,Y)$ is just the restriction of the relation $\mathcal{R}$ in $\T(X)$, despite $\T(X,Y)$ may be non-regular.  

\medskip 

Let $\alpha,\beta\in\O(X)$ be such that $\ker(\alpha)=\ker(\beta)$. 
Define a relation $\theta:\im(\alpha)\longrightarrow \im(\beta)$ by 
$(a,b)\in\theta$ if and only if $a\alpha^{-1}=b\beta^{-1}$, 
for all $a\in\im(\alpha)$ and $b\in\im(\beta)$. It follows immediately from the equality 
$\ker(\alpha)=\ker(\beta)$ that $\theta$ is a bijective function. Moreover, $\theta$ is an order-preserving function. In fact, let $a_1,a_2\in\im(\alpha)$ be such that $a_1\le a_2$ and let $b_1=a_1\theta$ and $b_2=a_2\theta$. Take $x_1\in a_1\alpha^{-1}=b_1\beta^{-1}$ and 
$x_2\in a_2\alpha^{-1}=b_2\beta^{-1}$. If $x_2\le x_1$ then 
$a_2=x_2\alpha\le x_1\alpha=a_1$, whence $a_1=a_2$ and so $a_1\theta=a_2\theta$. 
On the other hand, if $x_1\le x_2$ then 
$a_1\theta=b_1=x_1\beta\le x_2\beta=b_2=a_2\theta$. Thus $\theta$ also preserves the order. 
We call to $\theta:\im(\alpha)\longrightarrow \im(\beta)$ the \textit{canonical order-isomorphism} 
associated to the pair $(\alpha,\beta)$. 
Notice that, given $x\in X$ and if $a=x\alpha$ and $b=x\beta$, 
we have $x\in a\alpha^{-1}\cap b\beta^{-1}$, from which it follows that $a\alpha^{-1}=b\beta^{-1}$ and so $x\alpha\theta=a\theta=b=x\beta$ and $x\beta\theta^{-1}=b\theta^{-1}=a=x\alpha$. Therefore, $\alpha=\beta\theta^{-1}$ and $\beta=\alpha\theta$. 

Observe that, if $\alpha=\beta$ then the canonical order-isomorphism 
$\theta:\im(\alpha)\longrightarrow \im(\beta)=\im(\alpha)$ is just the partial identity with domain $\im(\alpha)$. Notice also that any partial identity is always (bi)completable in $\O(X)$, but it may be not completable in $\O(X,Y)$. For instance, considering $\R$ equipped with the usual order, the partial identity with domain $\left]0,1\right[$ is, trivially, not completable in $\O(\R,\left]0,1\right[)$.

\begin{proposition}\label{ROXY}
Let $X$ be a chain and let $Y$ be a nonempty subset of $X$.  
Let $\alpha,\beta\in\O(X,Y)$. 
Then $\alpha\mathscr{R}\beta$ in $\O(X,Y)$ if and only if either $\alpha=\beta$ or $\ker(\alpha)=\ker(\beta)$ 
and the canonical order-isomorphism $\theta:\im(\alpha)\longrightarrow \im(\beta)$ is bicompletable in $\O(X,Y)$. 
Furthermore, if $\alpha$ and $\beta$ are regular elements of $\O(X,Y)$, then  
$\alpha\mathscr{R}\beta$ in $\O(X,Y)$ if and only if $\ker(\alpha)=\ker(\beta)$. 
\end{proposition}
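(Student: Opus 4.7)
The plan is to reduce each direction of the equivalence to the definition of $\mathscr{R}$ in $\O(X,Y)^1$, translating between right multipliers and complete extensions of the canonical order-isomorphism $\theta$.

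For the forward direction, suppose $\alpha\mathscr{R}\beta$ in $\O(X,Y)$ with $\alpha\ne\beta$. Since $\O(X,Y)$ is a subsemigroup of $\T(X)$, we obtain $\alpha\mathscr{R}\beta$ in $\T(X)$ and hence $\ker(\alpha)=\ker(\beta)$, so the canonical order-isomorphism $\theta:\im(\alpha)\longrightarrow\im(\beta)$ is well defined. Because $\alpha\ne\beta$, the $\mathscr{R}$-relation in $\O(X,Y)^1$ yields elements $\gamma,\delta\in\O(X,Y)$ with $\alpha=\beta\gamma$ and $\beta=\alpha\delta$. For any $b\in\im(\beta)$ write $b=x\beta$; then $b\gamma=x\beta\gamma=x\alpha=x\beta\theta^{-1}=b\theta^{-1}$, so $\gamma$ is a complete extension of $\theta^{-1}$ in $\O(X,Y)$. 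Symmetrically $\delta$ completes $\theta$, so $\theta$ is bicompletable.

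For the converse, if $\alpha=\beta$ the conclusion is trivial. Otherwise, let $\gamma,\delta\in\O(X,Y)$ be complete extensions of $\theta^{-1}$ and $\theta$ respectively. Then, for every $x\in X$, $x\beta\gamma=x\beta\theta^{-1}=x\alpha$ (since $x\beta\in\im(\beta)$), giving $\beta\gamma=\alpha$; analogously $\alpha\delta=\beta$. Hence $\alpha\mathscr{R}\beta$ in $\O(X,Y)$.

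For the "furthermore" part, assume $\alpha,\beta\in\reg(\O(X,Y))$ with $\ker(\alpha)=\ker(\beta)$; again we may suppose $\alpha\ne\beta$ (otherwise the claim is trivial). Pick $\alpha',\beta'\in\O(X,Y)$ with $\alpha\alpha'\alpha=\alpha$ and $\beta\beta'\beta=\beta$. Then for each $x\in X$ the element $x\alpha\alpha'$ lies in $x\alpha\alpha^{-1}=x\beta\beta^{-1}$ (using $\ker(\alpha)=\ker(\beta)$), so $x\alpha\alpha'\beta=x\beta$. Thus, for any $a=x\alpha\in\im(\alpha)$, we get $a(\alpha'\beta)=x\beta=a\theta$, showing that $\alpha'\beta\in\O(X,Y)$ is a complete extension of $\theta$ in $\O(X,Y)$; symmetrically $\beta'\alpha$ completes $\theta^{-1}$. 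Hence $\theta$ is bicompletable and the first part of the proposition gives $\alpha\mathscr{R}\beta$ in $\O(X,Y)$.

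No step looks genuinely deep; the main care is bookkeeping. The sole non-routine point is verifying that $\gamma|_{\im(\beta)}$ coincides with $\theta^{-1}$ (and not merely some arbitrary map from $\im(\beta)$ into $\im(\alpha)$): this hinges on the fact that $\theta$ is defined precisely by the identification $x\alpha\longleftrightarrow x\beta$ forced by $\ker(\alpha)=\ker(\beta)$, which is exactly the identification induced by any right multiplier sending $\beta$ to $\alpha$. Once that correspondence between right multipliers and complete extensions of $\theta^{\pm 1}$ is made explicit, the three parts of the proof are essentially symmetric applications of it.
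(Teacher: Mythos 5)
Your proof is correct, and the main equivalence is argued exactly as in the paper: extract $\gamma,\delta\in\O(X,Y)$ from the definition of $\mathscr{R}$ in $\O(X,Y)^1$ (using $\alpha\neq\beta$ to exclude the identity), check that their restrictions to $\im(\beta)$ and $\im(\alpha)$ are forced to be $\theta^{-1}$ and $\theta$, and conversely read off $\alpha=\beta\gamma$ and $\beta=\alpha\delta$ from any pair of complete extensions. The only divergence is in the ``furthermore'' clause: the paper disposes of it in one line by invoking the standard fact that Green's relation $\mathscr{R}$ restricted to regular elements of a subsemigroup of $\T(X)$ coincides with $\mathscr{R}$ in $\T(X)$, whereas you give a self-contained construction, taking $\alpha'$ with $\alpha\alpha'\alpha=\alpha$ and verifying that $\alpha'\beta$ is a complete extension of $\theta$ (and symmetrically $\beta'\alpha$ of $\theta^{-1}$), then feeding this back into the first part. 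Your route is slightly longer but avoids quoting the inheritance lemma for regular elements; the computation $x\alpha\alpha'\in(x\alpha)\alpha^{-1}=(x\beta)\beta^{-1}$, hence $x\alpha\alpha'\beta=x\beta$, is the correct key step and does establish that $\alpha'\beta$ restricts to $\theta$ on $\im(\alpha)$. Both arguments are sound.
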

\begin{proof}
First, suppose that $\alpha\mathscr{R}\beta$ in $\O(X,Y)$, with $\alpha\neq\beta$.  
Let $\gamma,\xi\in\O(X,Y)$ be such that 
$\alpha=\beta\gamma$ and $\beta=\alpha\xi$. As observed above, we have $\ker(\alpha)=\ker(\beta)$ and so we may consider 
the canonical order-isomorphism $\theta:\im(\alpha)\longrightarrow \im(\beta)$. 
Let $a\in\im(\alpha)$ and $b\in\im(\beta)$ be such that $a\alpha^{-1}=b\beta^{-1}$. 
Hence $a\theta=b$ (and $b\theta^{-1}=a$). Take $x\in a\alpha^{-1}=b\beta^{-1}$. 
Then $a\xi=x\alpha\xi=x\beta=b$ and $b\gamma=x\beta\gamma=x\alpha=a$. 
Therefore, we proved that $\xi$ and $\gamma$ are complete extensions in $\O(X,Y)$ of $\theta$ and $\theta^{-1}$, respectively. 

The converse is an immediate consequence of the equalities $\alpha=\beta\theta^{-1}$ and $\beta=\alpha\theta$. 

Finally, if $\alpha$ and $\beta$ are regular in $\O(X,Y)$, then $\alpha\mathscr{R}\beta$ in $\O(X,Y)$ if and only $\alpha\mathscr{R}\beta$ in $\T(X)$ and so the result follows, as required.  
\end{proof}

In view of the above observation, in particular, we have: 

\begin{corollary}\label{ROX}
Let $X$ be a chain and let $\alpha,\beta\in\O(X)$. 
Then $\alpha\mathscr{R}\beta$ in $\O(X)$ if and only if $\ker(\alpha)=\ker(\beta)$ 
and the canonical order-isomorphism $\theta:\im(\alpha)\longrightarrow \im(\beta)$ is bicompletable in $\O(X)$. 
\end{corollary}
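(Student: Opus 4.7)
The plan is to derive this corollary directly from Proposition \ref{ROXY} by specializing to $Y=X$, so that $\O(X,Y)=\O(X)$. Since Proposition \ref{ROXY} already gives the characterization in the form ``$\alpha=\beta$, or $\ker(\alpha)=\ker(\beta)$ together with bicompletability of the canonical order-isomorphism'', the only work left is to absorb the degenerate case $\alpha=\beta$ into the second clause when $Y=X$.

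First I would invoke Proposition \ref{ROXY} to obtain both implications in that ``or'' form. The converse implication in the corollary is then immediate: if $\ker(\alpha)=\ker(\beta)$ and the canonical order-isomorphism $\theta:\im(\alpha)\longrightarrow\im(\beta)$ is bicompletable in $\O(X)$, Proposition \ref{ROXY} yields $\alpha\mathscr{R}\beta$ in $\O(X)$ without caring whether $\alpha=\beta$.

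For the direct implication, suppose $\alpha\mathscr{R}\beta$ in $\O(X)$. By Proposition \ref{ROXY}, either we are in the nontrivial case (and there is nothing left to show), or $\alpha=\beta$. In the latter situation $\ker(\alpha)=\ker(\beta)$ trivially, and the canonical order-isomorphism $\theta:\im(\alpha)\longrightarrow\im(\alpha)$ is simply the partial identity on $\im(\alpha)$. As remarked right before the statement of Proposition \ref{ROXY}, any partial identity is (bi)completable in $\O(X)$, since $\id_X\in\O(X)$ serves as a complete extension of both the partial identity and its inverse. Hence the two required conditions hold in this case as well.

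The ``hard part'' is entirely contained in Proposition \ref{ROXY}; here the only subtlety is the observation that when $Y=X$ the identity map $\id_X$ is available as a universal complete extension of partial identities, which is precisely why the $\alpha=\beta$ clause of Proposition \ref{ROXY} collapses into the general condition and can be dropped from the statement.
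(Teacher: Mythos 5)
Your proposal is correct and follows exactly the paper's route: the corollary is obtained from Proposition \ref{ROXY} with $Y=X$, and the $\alpha=\beta$ clause is absorbed by the observation (made in the paper just before Proposition \ref{ROXY}) that any partial identity is bicompletable in $\O(X)$, with $\id_X$ serving as a complete extension. Nothing is missing.
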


We also have immediately: 

\begin{corollary}
Let $X$ be a chain, let $Y$ be a nonempty subset of $X$ and let $\alpha$ and $\beta$ be two regular elements of $\O(X,Y)$ such that $\ker(\alpha)=\ker(\beta)$. Then the canonical order-isomorphism $\theta:\im(\alpha)\longrightarrow \im(\beta)$ is bicompletable in $\O(X,Y)$. 
\end{corollary}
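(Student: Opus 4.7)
The plan is to deduce the corollary directly from Proposition \ref{ROXY}, with the one nontrivial subcase handled by a pseudo-inverse. Since $\alpha$ and $\beta$ are regular in $\O(X,Y)$ and $\ker(\alpha)=\ker(\beta)$, the ``furthermore'' clause of Proposition \ref{ROXY} gives $\alpha\mathscr{R}\beta$ in $\O(X,Y)$, and the main equivalence of that proposition then forces either $\alpha=\beta$ or the canonical order-isomorphism $\theta$ to be bicompletable in $\O(X,Y)$. Only the first alternative needs further argument.

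Assume $\alpha=\beta$, so that $\theta$ is the partial identity on $\im(\alpha)$ and $\theta^{-1}=\theta$. Choose $\alpha'\in\O(X,Y)$ with $\alpha\alpha'\alpha=\alpha$, which exists by regularity of $\alpha$. Then $e=\alpha'\alpha\in\O(X,Y)$ is an idempotent whose restriction to $\im(\alpha)$ is the identity map, and therefore a complete extension of $\theta$ in $\O(X,Y)$; bicompletability follows since $\theta^{-1}=\theta$.

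A more uniform proof avoiding the case split is available: pick pseudo-inverses $\alpha',\beta'\in\O(X,Y)$ of $\alpha,\beta$ respectively. Then $\alpha'\beta,\beta'\alpha\in\O(X,Y)$. For each $a\in\im(\alpha)$, the identity $\alpha\alpha'\alpha=\alpha$ forces $a\alpha'\in a\alpha^{-1}$, and since $\ker(\alpha)=\ker(\beta)$ entails $a\alpha^{-1}=(a\theta)\beta^{-1}$, it follows that $a(\alpha'\beta)=a\theta$; symmetrically $\beta'\alpha$ completes $\theta^{-1}$. The main point to watch is that bicompletability of a partial identity is not automatic in $\O(X,Y)$ (as the identity on $\left]0,1\right[$ inside $\O(\R,\left]0,1\right[)$ illustrates), so regularity of $\alpha$ and $\beta$ is essential — but given it, the argument is immediate and there is no serious obstacle, all the real work having already been absorbed into Theorem \ref{regOXY} and Proposition \ref{ROXY}.
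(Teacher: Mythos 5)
Your proposal is correct and follows essentially the same route as the paper, which states this corollary as an immediate consequence of Proposition \ref{ROXY} (regularity plus $\ker(\alpha)=\ker(\beta)$ gives $\alpha\mathscr{R}\beta$ via the ``furthermore'' clause, and the main equivalence then yields bicompletability). You are in fact slightly more careful than the paper: the case $\alpha=\beta$ genuinely needs the pseudo-inverse argument you give, since a partial identity need not be completable in $\O(X,Y)$ in general, and your uniform argument via $\alpha'\beta$ and $\beta'\alpha$ is a clean way to absorb that case.
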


Observe that, given $\alpha,\beta\in\T(X,Y)$ with $\ker(\alpha)=\ker(\beta)$, 
it is clear (see \cite[Lemma 3.4]{Sanwong&Sommanee:2008}) that $\alpha\in\F(X,Y)$ if and only if $\beta\in\F(X,Y)$. This is, in fact, a trivial statement as, under this conditions, we have two $\mathscr{R}$-related elements and $\F(X,Y)=\reg(\T(X,Y))$. 

If $\alpha,\beta\in\O(X,Y)$ are such that $\ker(\alpha)=\ker(\beta)$, then from the previous property it follows immediately that also $\alpha\in\FO(X,Y)$ if and only if $\beta\in\FO(X,Y)$. However, notice that, in this case, we may not have  $\alpha\mathscr{R}\beta$ in $\O(X,Y)$ nor $\FO(X,Y)=\reg(\O(X,Y))$. 
Moreover, in fact, in $\O(X)$ is not true in general that the equality $\ker(\alpha)=\ker(\beta)$ suffices to imply that $\alpha\in\reg(O(X))$ if and only if $\beta\in\reg(O(X))$ (and, 
consequently, to imply that $\alpha$ and $\beta$ are $\mathscr{R}$-related). 

For instance, with $X=\R$ equipped with the usual order, being $\alpha\in\O(X)$ 
the exponential function and $\beta\in\O(X)$ the identity function, then $\alpha$ is non-regular (since $\im(\alpha)=\left]0,+\infty\right[$ has lower bounds in $\R$ but no minimum) and, contrariwise, $\beta$ is regular. However, we have $\ker(\alpha)=\ker(\beta)$. Obviously, $\alpha$ and $\beta$ are not $\mathscr{R}$-related, since one of them is regular and the other is not. 
Apart from this, notice that the canonical order-isomorphism 
$\theta:\im(\alpha)\longrightarrow \im(\beta)$ is the logarithm function, which 
is not completable in $\O(\R)$, as already observed.   

Observe also that two elements of $\O(X,Y)$ may be $\mathscr{R}$-related in $\O(X)$ but not 
$\mathscr{R}$-related in $\O(X,Y)$. For example, considering again $X=\R$ equipped with the usual order, let $\alpha,\beta\in\O(\R)$ be defined by $x\alpha=\arctan(x)$ and $x\beta=\arctan(x)+\frac{\pi}{2}$, for all $x\in\R$. 
Then, clearly, $\ker(\alpha)=\ker(\beta)$ (both are injective functions) and 
the canonical order-isomorphism 
$\theta:\im(\alpha)=\left]-\pi/2,\pi/2\right[\longrightarrow \im(\beta)=\left]0,\pi\right[$ (which is defined by $a\theta=a+\frac{\pi}{2}$, for 
$a\in \left]-\pi/2,\pi/2\right[)$ is bicompletable in $\O(\R)$. 
Thus $\alpha\mathscr{R}\beta$ in $\O(\R)$. On the other hand, let $Y=\left]-\pi/2,+\infty\right[$. Then, we also have $\alpha,\beta\in\O(\R,Y)$. 
However, despite $\theta$ is still completable in $\O(\R,Y)$, 
by the contrary, its inverse 
$\theta^{-1}:\im(\beta)=\left]0,\pi\right[\longrightarrow \im(\alpha)=\left]-\pi/2,\pi/2\right[$ is clearly not. 
Therefore $\alpha$ and $\beta$ are not $\mathscr{R}$-related in $\O(\R,Y)$. 

\bigskip 

As an immediate consequence of Propositions \ref{LOXY} and \ref{ROXY}, we have: 

\begin{corollary}\label{HOXY}
Let $X$ be a chain and let $Y$ be a nonempty subset of $X$.  
Let $\alpha,\beta\in\O(X,Y)$. 
Then $\alpha\mathscr{H}\beta$ in $\O(X,Y)$ if and only if either $\alpha=\beta$ or 
$\alpha,\beta\in\FO(X,Y)$, $\im(\alpha)=\im(\beta)$, 
$\ker(\alpha)=\ker(\beta)$ 
and the canonical order-isomorphism $\theta:\im(\alpha)\longrightarrow \im(\beta)$ is bicompletable in $\O(X,Y)$. 
Furthermore, if $\alpha$ and $\beta$ are regular elements of $\O(X,Y)$, then  
$\alpha\mathscr{H}\beta$ in $\O(X,Y)$ if and only if $\im(\alpha)=\im(\beta)$ and $\ker(\alpha)=\ker(\beta)$. 
\end{corollary}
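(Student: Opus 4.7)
The plan is to obtain Corollary \ref{HOXY} as a direct consequence of the standard identity $\mathscr{H}=\mathscr{L}\cap\mathscr{R}$, applied to the descriptions already established in Propositions \ref{LOXY} and \ref{ROXY}. There is essentially no new work to do; the argument is a careful bookkeeping of the conjunction of the two characterizations.

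For the forward direction, suppose $\alpha\mathscr{H}\beta$ in $\O(X,Y)$. If $\alpha=\beta$ there is nothing to prove, so assume $\alpha\neq\beta$. Then $\alpha\mathscr{L}\beta$ combined with $\alpha\neq\beta$ yields, by Proposition \ref{LOXY}, that $\alpha,\beta\in\FO(X,Y)$ and $\im(\alpha)=\im(\beta)$. Independently, $\alpha\mathscr{R}\beta$ combined with $\alpha\neq\beta$ yields, by Proposition \ref{ROXY}, that $\ker(\alpha)=\ker(\beta)$ and that the canonical order-isomorphism $\theta:\im(\alpha)\longrightarrow\im(\beta)$ is bicompletable in $\O(X,Y)$. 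Conjoining these gives exactly the four stated conditions. Conversely, if $\alpha,\beta\in\FO(X,Y)$ with $\im(\alpha)=\im(\beta)$, $\ker(\alpha)=\ker(\beta)$, and the canonical isomorphism bicompletable in $\O(X,Y)$, then Proposition \ref{LOXY} gives $\alpha\mathscr{L}\beta$ and Proposition \ref{ROXY} gives $\alpha\mathscr{R}\beta$, so $\alpha\mathscr{H}\beta$. The case $\alpha=\beta$ is of course trivial.

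For the ``Furthermore'' clause, suppose $\alpha$ and $\beta$ are regular elements of $\O(X,Y)$. Then $\alpha,\beta\in\reg(\O(X,Y))\subseteq\FO(X,Y)$, so membership in $\FO(X,Y)$ is automatic. Moreover, by the last assertion of Proposition \ref{ROXY}, $\alpha\mathscr{R}\beta$ in $\O(X,Y)$ is equivalent to $\ker(\alpha)=\ker(\beta)$ alone (the bicompletability clause being forced by regularity, as recorded in the unnumbered corollary following Proposition \ref{ROXY}). Combining with Proposition \ref{LOXY}, which for regular elements reduces to $\im(\alpha)=\im(\beta)$, the $\mathscr{H}$-relation becomes the conjunction $\im(\alpha)=\im(\beta)$ and $\ker(\alpha)=\ker(\beta)$, as claimed; the trivial case $\alpha=\beta$ is absorbed since it forces both equalities.

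The only step requiring any thought, rather than rote conjunction, is the observation that the trivial case $\alpha=\beta$ must be treated separately in the non-regular setting (because Propositions \ref{LOXY} and \ref{ROXY} themselves carry that disjunction), whereas in the regular setting it collapses into the general statement; this is not an obstacle so much as a formatting choice in writing out the cases.
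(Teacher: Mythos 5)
Your proof is correct and follows exactly the paper's route: the paper states this corollary as an immediate consequence of Propositions \ref{LOXY} and \ref{ROXY} via $\mathscr{H}=\mathscr{L}\cap\mathscr{R}$, and your write-up simply makes that conjunction explicit, including the correct use of $\reg(\O(X,Y))\subseteq\FO(X,Y)$ for the regular case. Nothing is missing.
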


If $Y = X$, like for the relations $\mathscr{L}$ and $\mathscr{R}$, a simpler statement can be presented:

\begin{corollary}
Let $X$ be a chain and let $\alpha,\beta\in\O(X)$. 
Then $\alpha\mathscr{H}\beta$ in $\O(X)$ if and only if $\im(\alpha)=\im(\beta)$,  $\ker(\alpha)=\ker(\beta)$ and the canonical order-isomorphism $\theta:\im(\alpha)\longrightarrow \im(\beta)$ is bicompletable in $\O(X)$. 
\end{corollary}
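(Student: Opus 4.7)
The plan is to deduce this as the specialization of Corollary \ref{HOXY} to the case $Y = X$, simultaneously using that relation $\mathscr{H}$ equals $\mathscr{L}\cap\mathscr{R}$ and invoking the already-established descriptions in Corollaries \ref{LOX} and \ref{ROX} directly.

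The shortest route is the second one. Since $\mathscr{H} = \mathscr{L}\cap\mathscr{R}$, I would simply observe that $\alpha\mathscr{H}\beta$ in $\O(X)$ if and only if $\alpha\mathscr{L}\beta$ and $\alpha\mathscr{R}\beta$ in $\O(X)$; then Corollary \ref{LOX} yields the condition $\im(\alpha)=\im(\beta)$, while Corollary \ref{ROX} yields $\ker(\alpha)=\ker(\beta)$ together with the bicompletability in $\O(X)$ of the canonical order-isomorphism $\theta:\im(\alpha)\longrightarrow\im(\beta)$. Conjoining the two equivalences gives both implications of the corollary.

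If instead one prefers to present it as the specialization of Corollary \ref{HOXY}, the only point that needs a brief remark is the absorption of the ``$\alpha=\beta$'' clause. For $Y=X$ every element $\alpha\in\O(X)$ satisfies $\im(\alpha)=X\alpha$, so $\FO(X,X)=\O(X)$ and the condition ``$\alpha,\beta\in\FO(X,Y)$'' becomes vacuous. Moreover, when $\alpha=\beta$ the canonical order-isomorphism is the partial identity on $\im(\alpha)$, which (as observed in the paragraph preceding Proposition \ref{ROXY}) is always bicompletable in $\O(X)$, so the ``$\alpha=\beta$'' branch collapses into the general branch and the three stated conditions are automatically satisfied.

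I do not expect any real obstacle here: the content is packaged entirely in Corollaries \ref{LOX} and \ref{ROX} (or equivalently in Corollary \ref{HOXY}), so the only task is the routine intersection of the two characterizations and the small bookkeeping remark above about the trivial case $\alpha=\beta$.
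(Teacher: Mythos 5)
Your proposal is correct and follows essentially the same route as the paper: the paper obtains Corollary \ref{HOXY} by intersecting the $\mathscr{L}$- and $\mathscr{R}$-descriptions (Propositions \ref{LOXY} and \ref{ROXY}) and then specializes to $Y=X$, which is exactly what you do via Corollaries \ref{LOX} and \ref{ROX}. Your bookkeeping remark that the ``$\alpha=\beta$'' branch is absorbed because $\FO(X,X)=\O(X)$ and the partial identity is always bicompletable in $\O(X)$ is precisely the observation the paper relies on.
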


\medskip 

Now, recall that Sanwong and Sommanee showed in \cite[Theorem 3.7]{Sanwong&Sommanee:2008} that, 
given $\alpha,\beta\in\T(X,Y)$, we have $\alpha\mathscr{D}\beta$ in $\T(X,Y)$ if and only if 
either $\alpha,\beta\in\F(X,Y)$ and $|\im(\alpha)|=|\im(\beta)|$ or $\alpha,\beta\in\T(X,Y)\setminus\F(X,Y)$ and 
$\ker(\alpha)=\ker(\beta)$. For $\alpha,\beta\in\O(X,Y)$ such that $\alpha\mathscr{D}\beta$ in $\O(X,Y)$, 
since $\alpha\mathscr{D}\beta$ also in $\T(X,Y)$, we immediately deduce that 
either $\alpha,\beta\in\FO(X,Y)$ or $\alpha,\beta\in\O(X,Y)\setminus\FO(X,Y)$. 
Furthermore, in $\O(X,Y)$, we have: 

\begin{proposition}\label{DOXY}
Let $X$ be a chain, let $Y$ be a nonempty subset of $X$ and let $\alpha,\beta\in\O(X,Y)$. 
Then $\alpha\mathscr{D}\beta$ in $\O(X,Y)$ if and only if one of the following three statements holds: 
\begin{enumerate}
\item $\alpha\mathscr{L}\beta$ in $\O(X,Y)$; 
\item $\alpha,\beta\in\FO(X,Y)$
and there exists a bicompletable in $\O(X,Y)$ 
order-isomorphism $\theta:\im(\alpha)\rightarrow \im(\beta)$;
\item $\alpha,\beta\in\O(X,Y)\setminus\FO(X,Y)$
and $\alpha\mathscr{R}\beta$ in $\O(X,Y)$.
\end{enumerate}
Furthermore, if $\alpha$ and $\beta$ are regular elements of $\O(X,Y)$, then  
$\alpha\mathscr{D}\beta$ in $\O(X,Y)$ if and only if $\im(\alpha)$ and $\im(\beta)$ are order-isomorphic. 
\end{proposition}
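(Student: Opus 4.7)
The plan is to exploit $\mathscr{D} = \mathscr{L} \circ \mathscr{R}$ together with Propositions \ref{LOXY} and \ref{ROXY}, which already describe $\mathscr{L}$ and $\mathscr{R}$ on $\O(X,Y)$, plus the observation (already recorded in the paper) that $\ker(\alpha)=\ker(\beta)$ in $\O(X,Y)$ forces $\alpha\in\FO(X,Y)$ if and only if $\beta\in\FO(X,Y)$.

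For the forward direction, I would pick $\gamma\in\O(X,Y)$ realising $\alpha\mathscr{L}\gamma\mathscr{R}\beta$ and case-analyse on whether $\gamma$ coincides with $\alpha$ or with $\beta$. The cases in which one of the two relations collapses to equality yield (1), (2), or (3) depending on the $\FO$/non-$\FO$ dichotomy. In the main case $\gamma\notin\{\alpha,\beta\}$, Proposition \ref{LOXY} gives $\alpha,\gamma\in\FO(X,Y)$ with $\im(\alpha)=\im(\gamma)$, and Proposition \ref{ROXY} gives $\ker(\gamma)=\ker(\beta)$ together with a bicompletable canonical order-isomorphism $\im(\gamma)\to\im(\beta)$; then $\beta\in\FO(X,Y)$ by the above observation, and transporting the canonical isomorphism along $\im(\alpha)=\im(\gamma)$ produces case (2).

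For the converse, (1) and (3) imply $\alpha\mathscr{D}\beta$ at once. For (2), given a bicompletable order-isomorphism $\theta:\im(\alpha)\to\im(\beta)$, I would let $\delta\in\O(X,Y)$ be a complete extension of $\theta^{-1}$ and define $\gamma:=\beta\delta$. A short verification shows $\im(\gamma)=\im(\alpha)$, $\ker(\gamma)=\ker(\beta)$ (since $\theta^{-1}$ is injective on $\im(\beta)$), and $\gamma\in\FO(X,Y)$ (using that, for any $y\in\im(\alpha)$, the element $y\theta\in\im(\beta)$ admits a preimage in $Y$ under $\beta$ by $\beta\in\FO(X,Y)$, which is then sent to $y$ by $\gamma$); moreover, by construction, the canonical isomorphism $\im(\gamma)\to\im(\beta)$ is precisely $\theta$. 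Hence $\alpha\mathscr{L}\gamma\mathscr{R}\beta$ by Propositions \ref{LOXY} and \ref{ROXY}.

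For the furthermore statement, one implication is immediate from (1) and (2), case (3) being vacuous for regular elements (which lie in $\FO(X,Y)$ by Theorem \ref{regOXY}). For the other, given regular $\alpha,\beta$ and an order-isomorphism $\theta:\im(\alpha)\to\im(\beta)$, I would prove $\theta$ is bicompletable in $\O(X,Y)$ using Proposition \ref{compl}. For each order ideal $I$ of $\im(\alpha)$ whose associated gap set $\{x\in X\mid a<x<b,\text{ for all }a\in I, b\in\im(\alpha)\setminus I\}$ is nonempty, selecting such an $x$ and observing that it cannot lie in $\im(\alpha)$, the regularity of $\alpha$ together with Theorem \ref{regsOX} forces $\max(I)$ or $\min(\im(\alpha)\setminus I)$ to exist (the boundary cases $I=\emptyset$ and $I=\im(\alpha)$ being handled by conditions (1) and (2) of Theorem \ref{regsOX}); its $\theta$-image then supplies the required witness in $\im(\beta)\subseteq Y$. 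Swapping the roles of $\alpha$ and $\beta$ handles $\theta^{-1}$. I expect this last step --- transferring the regularity criterion from $\im(\alpha)$ to $\im(\beta)\subseteq Y$ via $\theta$ --- to be the main technical point.
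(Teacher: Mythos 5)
Your proof is correct, and its core — the forward case analysis on an intermediate element $\gamma$ realising $\mathscr{D}=\mathscr{L}\circ\mathscr{R}$, and the converse for case (2) via an element with the kernel of one of $\alpha,\beta$ and the image of the other whose associated canonical order-isomorphism is exactly $\theta$ — is essentially the paper's argument (the paper factors $\mathscr{D}$ as $\mathscr{R}\circ\mathscr{L}$ and defines the intermediate element by $x\gamma=(x\alpha)\theta$ rather than as $\beta\delta$, but these are mirror images of the same idea). Where you genuinely diverge is the ``furthermore'' clause: the paper never shows that an arbitrary order-isomorphism $\theta:\im(\alpha)\rightarrow\im(\beta)$ between images of regular elements is bicompletable; instead it observes that its constructed $\gamma$ is $\mathscr{L}$-related to $\beta$, hence regular, and then invokes the regular case of Proposition \ref{ROXY} (equal kernels suffice for $\mathscr{R}$), bypassing completability entirely. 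You instead verify bicompletability directly from Proposition \ref{compl}, using Theorem \ref{regsOX} to produce $\max(I)$ or $\min(\im(\alpha)\setminus I)$ for each order ideal $I$ with nonempty gap set and pushing it through $\theta$ into $\im(\beta)\subseteq Y$; this works (the boundary cases $I=\emptyset$ and $I=\im(\alpha)$ are covered by conditions 1 and 2 of Theorem \ref{regsOX}, the interior case by condition 3, since the gap set being nonempty forces its elements outside $\im(\alpha)$ and neither bounds of it). Your route is slightly more technical but buys a stronger byproduct — every order-isomorphism between images of regular elements of $\O(X,Y)$ is automatically bicompletable, a mild generalisation of the paper's corollary on canonical order-isomorphisms — whereas the paper's route is shorter because it delegates all the regularity bookkeeping to the already-proved regular case of Proposition \ref{ROXY}.
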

\begin{proof}
First, suppose that $\alpha\mathscr{D}\beta$ and let $\gamma\in\O(X,Y)$ be such that 
$\alpha\mathscr{R}\gamma$ and $\gamma\mathscr{L}\beta$. Then, by Proposition \ref{ROXY}, 
either $\alpha=\gamma$ or 
$\ker(\alpha)=\ker(\gamma)$ and the canonical order-isomorphism 
$\theta:\im(\alpha)\longrightarrow \im(\gamma)$ is bicompletable in $\O(X,Y)$. 
On the other hand, by Proposition \ref{LOXY}, we have 
either $\gamma=\beta$ or $\gamma,\beta\in\FO(X,Y)$ and 
$\im(\gamma)=\im(\beta)$.

If $\alpha=\gamma$ then, trivially, $\alpha\mathscr{L}\beta$. Hence, 
let us suppose that $\ker(\alpha)=\ker(\gamma)$ and the canonical order-isomorphism 
$\theta:\im(\alpha)\longrightarrow \im(\gamma)$ is bicompletable in $\O(X,Y)$. 

As observed above, from the equality $\ker(\alpha)=\ker(\gamma)$ we obtain $\alpha\in\FO(X,Y)$ if and only if $\gamma\in\FO(X,Y)$. 

If $\gamma=\beta$ then either $\alpha,\beta\in\FO(X,Y)$ or $\alpha,\beta\in\O(X,Y)\setminus\FO(X,Y)$ and thus  
it follows immediately that statement 2 or 3 holds. 
On the other hand, suppose that $\gamma,\beta\in\FO(X,Y)$ and $\im(\gamma)=\im(\beta)$. Then, we have an  
order-isomorphism $\theta:\im(\alpha)\longrightarrow \im(\gamma)=\im(\beta)$ which is bicompletable in $\O(X,Y)$ 
and so statement 2 holds. 

Observe that, in any case, $\im(\alpha)$ and $\im(\beta)$ are order-isomorphic. 

In order to prove the converse implication, observe that
if either statement 1 or 3 holds then, trivially, we have $\alpha\mathscr{D}\beta$. 

Therefore, suppose that $\alpha,\beta\in\FO(X,Y)$ and there exists an order-isomorphism
$\theta:\im(\alpha)\longrightarrow \im(\beta)$. 

Define a transformation $\gamma$ of $X$ by $x\gamma=(x\alpha)\theta$, for all $x\in X$. 
Clearly, $\gamma\in\FO(X,Y)$ and $\im(\gamma)=\im(\beta)$ and so, 
by Proposition \ref{LOXY}, it follows that $\gamma\mathscr{L}\beta$. 
On the other hand, given $x,y\in X$, since $\theta$ is a bijection, we have $x\alpha=y\alpha$ if and only if $x\gamma=(x\alpha)\theta=(y\alpha)\theta=y\gamma$. 
Thus $\ker(\alpha)=\ker(\gamma)$. 

Now, in particular, if $\alpha$ and $\beta$ are regular elements of $\O(X,Y)$ then $\gamma$ is also a regular element 
of $\O(X,Y)$ (since $\gamma\mathscr{L}\beta$), whence from $\ker(\alpha)=\ker(\gamma)$ we obtain $\alpha\mathscr{R}\gamma$, by Proposition \ref{ROXY}. 

Next, take $a\in\im(\alpha)$ and $b\in\im(\gamma)$.  
If $a\alpha^{-1}=b\gamma^{-1}$ then, being $x\in a\alpha^{-1}=b\gamma^{-1}$, 
we have $a\theta=(x\alpha)\theta=x\gamma=b$. 
If $a\theta=b$ then, being $x\in a\alpha^{-1}$, we have $b=(x\alpha)\theta=x\gamma$, 
whence $x\in b\gamma^{-1}$ and so $x\in a\alpha^{-1}\cap b\gamma^{-1}$, 
from which it follows that $a\alpha^{-1}=b\gamma^{-1}$. 
Thus, the order-isomorphism $\theta:\im(\alpha)\longrightarrow \im(\beta)=\im(\gamma)$ is, 
in fact, the canonical order-isomorphism associated to $(\alpha,\gamma)$.  
Hence, in addition, if $\theta$ is bicompletable in $\O(X,Y)$ then, by Proposition \ref{ROXY}, we also have $\alpha\mathscr{R}\gamma$. 

Therefore, we proved that if either $\alpha$ and $\beta$ are regular elements of $\O(X,Y)$ with 
order-isomorphic images or there exists a bicompletable in $\O(X,Y)$ order-isomorphism
$\theta:\im(\alpha)\longrightarrow \im(\beta)$, then $\alpha\mathscr{D}\beta$, as required. 
\end{proof}

Notice that we may have elements $\alpha,\beta\in\FO(X,Y)$ such that $\im(\alpha)=\im(\beta)$, and so $\alpha\mathscr{L}\beta$ in $\O(X,Y)$, that do not verify the condition 2 above. 
Next, we provide such an example. 

Let $X=\R\setminus\{0\}$ equipped with the usual order and take $Y=X\setminus\{2\}=\R\setminus\{0,2\}$. 
Define $\alpha,\beta\in\T(X)$ by 
$$
x\alpha=\left\{
\begin{array}{ll}
1 & \mbox{if $x\le-1$}\\
x+2 & \mbox{if $-1<x<0$ or $0<x<1$}\\
3 & \mbox{if $1\le x\le2$}\\
x+1& \mbox{if $x>2$}
\end{array}
\right. 
\quad\text{and}\quad
x\beta=\left\{
\begin{array}{ll}
1 & \mbox{if $x\le-1$}\\
x+2 & \mbox{if $-1<x<0$ or $0<x<1$}\\
3 & \mbox{if $1\le x\le2$}\\
x^2-1& \mbox{if $x>2$.}
\end{array}
\right. 
$$
Clearly, $\alpha,\beta\in\O(X)$. Moreover, 
$\im(\alpha)=Y\alpha=\left[1,2\right[\cup\left]2,+\infty\right[\subseteq Y$ and 
$\im(\beta)=Y\beta=\left[1,2\right[\cup\left]2,+\infty\right[\subseteq Y$, 
whence $\alpha,\beta\in\FO(X,Y)$ and $\im(\alpha)=\im(\beta)$. 

On the other hand, suppose there exists an 
order-isomorphism $\theta:\left[1,2\right[\cup\left]2,+\infty\right[\longrightarrow \left[1,2\right[\cup\left]2,+\infty\right[$ which is completable in $\O(X,Y)$.  
Since $I=\left[1,2\right[$ is an order ideal of $A=\left[1,2\right[\cup\left]2,+\infty\right[$ 
and 
$\{x\in X\mid a<x<b, ~\mbox{for all $a\in I$ and $b\in A\setminus I$}\}=\{2\}\neq\emptyset$, 
by Proposition \ref{compl}, there exists an element $y\in Y$ such that 
$a\theta\le y\le b\theta$, for all $a\in I$ and $b\in A\setminus I$. 
Now, taking $a\in I$, we have $a\theta\ge1$, whence $y\ge1$ and so 
$y\in\left[1,2\right[\cup\left]2,+\infty\right[$. 
Therefore $y=c\theta$, for some $c\in\left[1,2\right[\cup\left]2,+\infty\right[$. 
From $a\theta\le c\theta\le b\theta$, for all $a\in I$ and $b\in A\setminus I$, it follows 
$a\le c\le b$, for all $a\in I$ and $b\in A\setminus I$, and so $c=2$, which is a contradiction. 
Therefore, there is no bicompletable in $\O(X,Y)$ 
order-isomorphism $\theta: \im(\alpha)\longrightarrow\im(\beta)$, as required. 

\medskip 

By taking into account that any partial identity is (bi)completable in $\O(X)$,  
we derive from Proposition \ref{DOXY} the following simpler statement for $Y = X$:

\begin{corollary}\label{DOX}
Let $X$ be a chain and let $\alpha,\beta\in\O(X)$. 
Then $\alpha\mathscr{D}\beta$ in $\O(X)$ if and only if there exists a bicompletable in $\O(X)$ 
order-isomorphism $\theta:\im(\alpha)\longrightarrow \im(\beta)$. 
\end{corollary}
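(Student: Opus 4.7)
The plan is to derive the corollary directly from Proposition \ref{DOXY} by specializing to $Y=X$. The crucial observation is that for every $\alpha\in\O(X)$ the tautological equality $X\alpha=\im(\alpha)$ holds, so $\FO(X,X)=\O(X)$. This collapses two of the three alternatives in Proposition \ref{DOXY}: the third alternative has the empty hypothesis $\alpha,\beta\in\O(X)\setminus\FO(X,X)$, while in the second alternative the membership condition $\alpha,\beta\in\FO(X,Y)$ becomes automatic.

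To complete the forward direction, I would absorb alternative (1) of Proposition \ref{DOXY} into the order-isomorphism condition stated in the corollary. If $\alpha\mathscr{L}\beta$ in $\O(X)$, then Corollary \ref{LOX} gives $\im(\alpha)=\im(\beta)$, so the partial identity on $\im(\alpha)$ furnishes an order-isomorphism $\theta:\im(\alpha)\longrightarrow\im(\beta)$; as already remarked in the paragraph preceding Proposition \ref{ROXY}, any partial identity is bicompletable in $\O(X)$, and so this $\theta$ meets the required condition. Hence every $\mathscr{D}$-related pair in $\O(X)$ exhibits a bicompletable order-isomorphism between the images.

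The converse direction is immediate: given a bicompletable in $\O(X)$ order-isomorphism $\theta:\im(\alpha)\longrightarrow\im(\beta)$, alternative (2) of Proposition \ref{DOXY} is satisfied (with the $\FO$-membership being free), whence $\alpha\mathscr{D}\beta$ in $\O(X)$. There is no genuine obstacle here; the substantive work has already been done inside Proposition \ref{DOXY}, and the only point that deserves emphasis is the interplay between the two simplifications, namely $\FO(X,X)=\O(X)$ and the bicompletability of partial identities in $\O(X)$, which together allow the three-case statement to be compressed into a single clean equivalence.
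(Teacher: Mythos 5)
Your proposal is correct and follows essentially the same route as the paper, which likewise derives the corollary from Proposition \ref{DOXY} by specializing to $Y=X$, using $\FO(X,X)=\O(X)$ and the fact that partial identities are bicompletable in $\O(X)$ to collapse the three cases. The only elaboration you add is the explicit absorption of the $\mathscr{L}$ case via Corollary \ref{LOX}, which the paper leaves implicit.
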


\medskip 

Finally, we focus our attention in Green's relation $\mathscr{J}$. 
Regarding $\T(X,Y)$, Sanwong and Sommanee proved in \cite[Theorem 3.9]{Sanwong&Sommanee:2008} that, 
given $\alpha,\beta\in\T(X,Y)$, we have $\alpha\mathscr{J}\beta$ in $\T(X,Y)$ if and only if 
$|\im(\alpha)|=|Y\alpha|=|Y\beta|=|\im(\beta)|$ or $\ker(\alpha)=\ker(\beta)$. 

\smallskip

Observe that any injective order-preserving function $\theta:A\longrightarrow B$,  
with $A$ and $B$ two subchains of $X$, induces an order-isomorphism 
$\bar\theta:A\longrightarrow A\theta$. By convenience, to the inverse order-isomorphism 
${\bar\theta}^{-1}:A\theta\longrightarrow A$ we also call \textit{inverse} of $\theta$ 
and we simply denote ${\bar\theta}^{-1}$ by $\theta^{-1}$. 

\begin{proposition}\label{JOXY}
Let $X$ be a chain, let $Y$ be a nonempty subset of $X$ and let $\alpha,\beta\in\O(X,Y)$. 
Then $\alpha\mathscr{J}\beta$ in $\O(X,Y)$ if and only if one of the following three statements holds: 
\begin{enumerate}
\item $\alpha\mathscr{L}\beta$ in $\O(X,Y)$; 
\item $\alpha\mathscr{R}\beta$ in $\O(X,Y)$; 
\item There exist injective order-preserving functions $\theta:\im(\alpha)\longrightarrow Y\beta$ 
and $\tau:\im(\beta)\longrightarrow Y\alpha$ admitting completable inverses in $\O(X,Y)$.
\end{enumerate}
\end{proposition}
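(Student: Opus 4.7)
The plan is to prove both directions separately. For the reverse implication, Conditions~1 and 2 trivially yield $\alpha\mathscr{J}\beta$ since $\mathscr{L},\mathscr{R}\subseteq\mathscr{J}$; for Condition~3, fix complete extensions $\gamma,\delta\in\O(X,Y)$ of $\theta^{-1}$ and $\tau^{-1}$ respectively, and set $v_1=\gamma$, $v_2=\delta$. To build $u_1$, for each $b\in Y\beta$ select an element $y_b\in Y$ with $y_b\beta=b$; since $\beta$ is order-preserving, the fibers $b\beta^{-1}\cap Y$ for $b\in Y\beta$ form a chain of convex subsets of $Y$ (so $b<b'$ implies every element of $b\beta^{-1}\cap Y$ lies strictly below every element of $b'\beta^{-1}\cap Y$), and hence the selection $b\mapsto y_b$ is automatically order-preserving. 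Defining $xu_1=y_{(x\alpha)\theta}$ then yields $u_1\in\O(X,Y)$, and $(xu_1)\beta v_1=((x\alpha)\theta)\gamma=x\alpha$ shows $\alpha=u_1\beta v_1$. Symmetrically one builds $u_2$ with $\beta=u_2\alpha v_2$, establishing $\alpha\mathscr{J}\beta$.

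For the forward direction, suppose $\alpha\mathscr{J}\beta$, so $\alpha=u_1\beta v_1$ and $\beta=u_2\alpha v_2$ for some $u_i,v_i\in\O(X,Y)^1$. If $u_1=u_2=1$, then $\alpha=\beta v_1$ and $\beta=\alpha v_2$, giving $\alpha\mathscr{R}\beta$ in $\O(X,Y)$, i.e.\ Condition~2; dually $v_1=v_2=1$ gives Condition~1. Outside these sub-cases, the plan is to substitute the equations into each other to obtain $\alpha=(u_1u_2u_1)\beta(v_1v_2v_1)$ and $\beta=(u_2u_1u_2)\alpha(v_2v_1v_2)$. All four outer factors lie in $\O(X,Y)$: the product $u_1u_2u_1$ equals the adjoined identity of $\O(X,Y)^1$ only when $u_1=u_2=1$, which is excluded, and similarly for the other three products. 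Writing the new representations as $\alpha=u\beta v$ and $\beta=p\alpha q$ with $u,v,p,q\in\O(X,Y)$, define $\theta\colon\im(\alpha)\to Y\beta$ by choosing $x_a\in a\alpha^{-1}$ for each $a\in\im(\alpha)$ and setting $\theta(a)=(x_au)\beta$; define $\tau$ analogously from $\beta=p\alpha q$. Since $x_au\in Y$, indeed $\theta(a)\in Y\beta$; since $\theta(a)v=x_a\alpha=a$, the map $\theta$ is injective and $v$ is a complete extension in $\O(X,Y)$ of $\theta^{-1}$. Order-preservation of $\theta$ follows because the fibers of $\alpha$ are convex subsets of $X$ linearly ordered by the value of $\alpha$, so $a<a'$ forces $x_a<x_{a'}$, and then order-preservation of $u$ and of $\beta$ finishes the job. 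This establishes Condition~3.

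The main obstacle is managing the adjoined identity in the forward direction: a single representation $\alpha=u_1\beta v_1$ with $u_1=1$ does not immediately yield a $\theta$ whose image lies in $Y\beta$ rather than merely in $\im(\beta)$, because $x_a u_1=x_a$ might lie outside $Y$. The substitution trick routes the identity through $u_2$ and, outside of the $\mathscr{L}$ and $\mathscr{R}$ cases, necessarily produces genuine $\O(X,Y)$-elements. A secondary issue is that the construction of $\theta$ depends on the choice of fiber representatives $x_a$, but the linear ordering of the fibers inherited from $\alpha$ being order-preserving makes this choice immaterial for both order-preservation and injectivity of $\theta$.
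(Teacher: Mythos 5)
Your proposal is correct and follows essentially the same route as the paper's proof: the same substitution trick $\alpha=(u_1u_2u_1)\beta(v_1v_2v_1)$ to force all four outer factors into $\O(X,Y)$ outside the $\mathscr{L}$/$\mathscr{R}$ cases, the same construction of $\theta$ via fiber representatives (your $a\mapsto (x_au)\beta$ is the paper's choice of $w_a\in\im(u\beta)\cap av^{-1}$), and the same choice of preimages $y_b\in b\beta^{-1}\cap Y$ to build $u_1$ in the converse. The verifications of order-preservation, injectivity, and completability of the inverses all match the paper's.
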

\begin{proof}
First, suppose that $\alpha\mathscr{J}\beta$. 
Let $\lambda,\gamma,\delta,\xi\in\O(X,Y)^1$ be such that $\alpha=\lambda\beta\gamma$ and 
$\beta=\delta\alpha\xi$. Since $\alpha=(\lambda\delta\lambda)\beta(\gamma\xi\gamma)$ and 
$\beta=(\delta\lambda\delta)\alpha(\xi\gamma\xi)$, we may assume, with no loss of generality, 
that $\lambda\in\O(X,Y)$ if and only if $\delta\in\O(X,Y)$ and also  
$\gamma\in\O(X,Y)$ if and only if $\xi\in\O(X,Y)$. Therefore, we have three cases to consider. 

If $\gamma=1$ then $\alpha=\lambda\beta$ and 
$\beta=\delta\alpha$ and so $\alpha\mathscr{L}\beta$.

Similarly, if $\lambda=1$ then $\alpha=\beta\gamma$ and 
$\beta=\alpha\xi$, whence $\alpha\mathscr{R}\beta$. 

Thus, it remains to consider the case $\lambda,\gamma,\delta,\xi\in\O(X,Y)$. 

For each $a\in\im(\alpha)$, we choose an element $w_a\in\im(\lambda\beta)\cap a\gamma^{-1}\subseteq Y\beta$. 
Then, define a map 
$\theta:\im(\alpha)\longrightarrow Y\beta$ by $a\theta=w_a$, 
for all $a\in\im(\alpha)$. Let $a,b\in\im(\alpha)$ be such that $a<b$. 
If $w_b\le w_a$, then $b=w_b\gamma\le w_a\gamma=a$, which is a contradiction. 
Thus $a\theta=w_a<w_b=b\theta$ and so $\theta$ is an injective order-preserving function. 
Now, since $w_a\theta^{-1}=a=w_a\gamma$, for all $a\in\im(\alpha)$, it follows that $\gamma$ is a complete extension of $\theta^{-1}:\im(\theta)\longrightarrow\im(\alpha)$ in $\O(X,Y)$.  
Similarly, we define an injective order-preserving function $\tau:\im(\beta)\longrightarrow Y\alpha$ admitting a completable inverse in $\O(X,Y)$.

Conversely, if either $\alpha\mathscr{L}\beta$ or $\alpha\mathscr{R}\beta$ then, trivially, $\alpha\mathscr{J}\beta$. 
So, assume that statement 3 holds. 

Let $\theta:\im(\alpha)\longrightarrow Y\beta$ be an injective order-preserving function such that its inverse $\theta^{-1}:\im(\theta)\longrightarrow \im(\alpha)$ has a complete extension $\gamma\in\O(X,Y)$. 
For each $b\in\im(\theta)\subseteq Y\beta$ choose an element $z_b\in b\beta^{-1}\cap Y$. 
Let $\lambda$ be the transformation of $X$ defined by $x\lambda=z_{x\alpha\theta}$, for all $x\in X$.  
Hence $\lambda\in\O(X,Y)$. In fact, take $x,y\in X$ such that $x\le y$. 
Then $x\alpha\le y\alpha$ and so $x\alpha\theta\le y\alpha\theta$. If 
$z_{x\alpha\theta}\ge z_{y\alpha\theta}$ then 
$x\alpha\theta=z_{x\alpha\theta}\beta\ge  z_{y\alpha\theta}\beta=y\alpha\theta$, 
whence $x\alpha\theta=y\alpha\theta$ and so  
$z_{x\alpha\theta}=z_{y\alpha\theta}$. Thus $z_{x\alpha\theta}\le z_{y\alpha\theta}$, i.e. 
$x\lambda\le y\lambda$. Moreover, for each $x\in X$, we have 
$
x\lambda\beta\gamma=z_{x\alpha\theta}\beta\gamma=x\alpha\theta\gamma=x\alpha\theta\theta^{-1}=x\alpha.
$
Therefore $\alpha=\lambda\beta\gamma$. 

Similarly, by supposing the existence of an injective order-preserving function 
$\tau:\im(\beta)\longrightarrow Y\alpha$ with completable inverse in $\O(X,Y)$, we may find elements  
$\delta,\xi\in\O(X,Y)$ such that $\beta=\delta\alpha\xi$. Thus, we proved that $\alpha\mathscr{J}\beta$, as required.  
\end{proof}

In view of Corollaries \ref{LOX} and \ref{ROX} and by taking into account once again that any partial identity is (bi)completable in $\O(X)$, for $Y = X$, we may state simply:

\begin{corollary}\label{JOX}
Let $X$ be a chain and let $\alpha,\beta\in\O(X)$. 
Then $\alpha\mathscr{J}\beta$ in $\O(X)$ if and only if there exist
injective order-preserving functions $\theta:\im(\alpha)\longrightarrow \im(\beta)$ 
and $\tau:\im(\beta)\longrightarrow \im(\alpha)$ admitting completable inverses in $\O(X)$. 
\end{corollary}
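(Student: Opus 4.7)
The plan is to derive this corollary as a direct specialisation of Proposition \ref{JOXY} to the case $Y=X$, where of course $Y\alpha=\im(\alpha)$ and $Y\beta=\im(\beta)$, so that statement 3 of the proposition becomes precisely the condition stated in this corollary. Thus the nontrivial work is only to check that, when $Y=X$, conditions 1 and 2 of Proposition \ref{JOXY} are already absorbed into condition 3, so that condition 3 alone characterises $\alpha\mathscr{J}\beta$.

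First I would handle the forward direction. Assume $\alpha\mathscr{J}\beta$ in $\O(X)$ and invoke Proposition \ref{JOXY}, which leaves three cases. If condition 3 holds, there is nothing to prove. If $\alpha\mathscr{L}\beta$ in $\O(X)$, then by Corollary \ref{LOX} we have $\im(\alpha)=\im(\beta)$, so take for both $\theta$ and $\tau$ the identity map on this common set; their inverses are again partial identities, and the paper has already observed that every partial identity is (bi)completable in $\O(X)$. If $\alpha\mathscr{R}\beta$ in $\O(X)$, then by Corollary \ref{ROX} the canonical order-isomorphism $\theta:\im(\alpha)\longrightarrow\im(\beta)$ is bicompletable in $\O(X)$; set $\tau=\theta^{-1}$, so that both $\theta^{-1}$ and $\tau^{-1}=\theta$ are completable in $\O(X)$. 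In every case we obtain the required pair of injective order-preserving maps with completable inverses.

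For the converse, suppose $\theta:\im(\alpha)\longrightarrow\im(\beta)$ and $\tau:\im(\beta)\longrightarrow\im(\alpha)$ are injective order-preserving functions whose inverses are completable in $\O(X)$. Since $Y\alpha=\im(\alpha)$ and $Y\beta=\im(\beta)$ when $Y=X$, this is exactly condition 3 of Proposition \ref{JOXY}, and so $\alpha\mathscr{J}\beta$ in $\O(X)$, completing the proof.

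The only delicate point is the step that swallows conditions 1 and 2 of Proposition \ref{JOXY} into condition 3; but this dissolves once one recalls the remark in the paper that partial identities are always bicompletable in $\O(X)$, together with the fact that Corollary \ref{ROX} already supplies a bicompletable canonical order-isomorphism in the $\mathscr{R}$-case. No further technicalities are required.
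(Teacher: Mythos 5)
Your proposal is correct and follows exactly the route the paper intends: specialise Proposition \ref{JOXY} to $Y=X$, absorb the $\mathscr{L}$- and $\mathscr{R}$-cases into condition 3 via Corollary \ref{LOX} (identity maps, which are bicompletable partial identities) and Corollary \ref{ROX} (the bicompletable canonical order-isomorphism), and note that the converse is literally condition 3 with $Y\alpha=\im(\alpha)$ and $Y\beta=\im(\beta)$. The paper leaves all of this implicit in a one-line remark, so your write-up is simply a fuller version of the same argument.
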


We finish this section by showing that we may have $\mathscr{D}\subsetneq\mathscr{J}$ in $\O(X)$, or even in $\O(X,Y)$, with $Y$ a proper nonempty subset of $X$. 

Let $X=\R$ equipped with the usual order. 
Let $\alpha,\beta\in\O(X)$ be defined by $x\alpha=\arctan(x)$, for $x\in\R$, and 
$$
x\beta=\left\{
\begin{array}{cl}
-1 & \mbox{if $x<-1$}\\
x & \mbox{if $-1\le x\le1$}\\
1& \mbox{if $x>1$.}
\end{array}
\right. 
$$
Then $\im(\alpha)=\left]-\pi/2,\pi/2\right[$ and $\im(\beta)=\left[-1,1\right]$, which are not order-isomorphic intervals of $\R$ and so $(\alpha,\beta)\not\in\mathscr{D}$ in $\O(\R)$. 
On the other hand, it is easy to show that $\alpha\mathscr{J}\beta$ in $\O(\R)$. In fact, given any two bounded intervals $I$ and $J$ of $\R$ we may find an injective order-preserving function $I\longrightarrow J$ admitting a completable inverse in $\O(\R)$.  
By taking, for instance, $Y=\left[-\pi/2,\pi/2\right]$, we still have  
$\alpha,\beta\in\O(X,Y)$ and it is also easy to show that $(\alpha,\beta)\not\in\mathscr{D}$ and $\alpha\mathscr{J}\beta$ in $\O(X,Y)$.

\section{An isomorphism theorem} \label{iso}

For finite chains $X$ and $X'$, it is easy to show that
the monoids $\O(X)$ and $\O(X')$ are isomorphic if and only if $|X|=|X'|$. 
In general, given finite or infinite chains $X$ and $X'$, 
it is well known, and in fact not difficult to prove (for completeness sake, an argument will be presented below), that 
the monoids $\O(X)$ and $\O(X')$ are isomorphic if and only if $X$ and $X'$ are order-isomorphic or order-anti-isomorphic. 
Notice that, if $X$ and $X'$ are finite chains, then $X$ and $X'$ are order-isomorphic if and only if $X$ and $X'$ are order-anti-isomorphic if and only if $|X|=|X'|$.  

From now on, we are mainly interested in the case where $X$ is a finite chain. However, since some arguments are valid in general, we only make such restriction whenever necessary. 

Being $X$ a set and $x\in X$, we denote by $\X_x$ the constant transformation of $\T(X)$ with image $\{x\}$. Observe that, given $x\in X$ and $\alpha\in\T(X)$, we have 
$\X_x\alpha=\X_{x\alpha}$ and $\alpha\X_x=\X_x$. These immediate equalities allow us to easily deduce the following properties.   

\begin{lemma}\label{basic}
Let $X$ and $X'$ be two chains and let $Y$ and $Y'$ be nonempty subsets of $X$ and $X'$, respectively. Let $\Theta:\O(X,Y)\longrightarrow\O(X',Y')$ be an isomorphism. Then:
\begin{enumerate}
\item For all $x\in Y$ there exists (a unique) $x'\in Y'$ such that 
$\X_x\Theta=\X'_{x'}$; 
\item $\Theta$ induces a bijection $\theta:Y\longrightarrow Y'$ defined by $\X_x\Theta=\X'_{x\theta}$, 
for all $x\in Y$; 
\item $(x\theta)(\alpha\Theta)=(x\alpha)\theta$, for all $x\in Y$ and $\alpha\in\O(X,Y)$; 
\item $\fix(\alpha\Theta)=(\fix(\alpha))\theta$, for all $\alpha\in\O(X,Y)$; 
in particular,  $\im(\alpha\Theta)=(\im(\alpha))\theta$, 
for any idempotent $\alpha\in\O(X,Y)$;
\item $\im(\alpha\Theta)=(\im(\alpha))\theta$, 
for all $\alpha\in\O(X,Y)$ such that $|\im(\alpha)|=2$; 
\item The bijection $\theta:Y\longrightarrow Y'$ is either an order-isomorphism or an order-anti-isomorphism. 
\end{enumerate}
\end{lemma}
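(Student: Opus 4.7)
For parts (1)--(3), my plan is to observe that the constants $\X_x$ with $x \in Y$ are precisely the left zeros of $\O(X,Y)$: they certainly satisfy $\alpha\X_x = \X_x$ for every $\alpha$, and conversely any map $z$ with $\alpha z = z$ for all $\alpha$ must, on testing against every constant, take the same value on every point of $X$, and so be itself a constant with image a singleton of $Y$. Since $\Theta$ carries left zeros to left zeros bijectively, (1) and (2) follow at once. For (3) I apply $\Theta$ to the identity $\X_x\alpha = \X_{x\alpha}$ to get $\X'_{x\theta}\cdot(\alpha\Theta) = \X'_{(x\alpha)\theta}$, and evaluate both sides of this equality of constant transformations at an arbitrary point of $X'$.

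For (4) I note that $\fix(\alpha) \subseteq \im(\alpha) \subseteq Y$, so $(\fix(\alpha))\theta$ is a well-defined subset of $Y'$. The inclusion $(\fix(\alpha))\theta \subseteq \fix(\alpha\Theta)$ is immediate from (3). Conversely, any $z \in \fix(\alpha\Theta)$ lies in $\im(\alpha\Theta) \subseteq Y'$, hence equals $x\theta$ for a unique $x \in Y$; the equation $(x\alpha)\theta = x\theta$ coming from (3) together with injectivity of $\theta$ then forces $x\alpha = x$. The ``in particular'' assertion is just $\im(\alpha) = \fix(\alpha)$ for idempotent $\alpha$, applied to both $\alpha$ and the idempotent $\alpha\Theta$.

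Part (5) is the crucial technical step. Given $\alpha$ with $\im(\alpha) = \{a,b\}$ and $a < b$ (necessarily in $Y$), I construct the idempotent $\varepsilon \in \O(X,Y)$ defined by $\varepsilon(x) = a$ for $x < b$ and $\varepsilon(x) = b$ for $x \ge b$. Since $\varepsilon$ fixes every element of $\im(\alpha)$, we have $\alpha\varepsilon = \alpha$; applying $\Theta$ gives $\alpha\Theta = (\alpha\Theta)(\varepsilon\Theta)$, so $\im(\alpha\Theta) \subseteq \fix(\varepsilon\Theta)$. Because $\varepsilon$ is idempotent, (4) yields $\fix(\varepsilon\Theta) = \im(\varepsilon)\theta = \{a\theta, b\theta\}$. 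On the other hand, $|\im(\alpha\Theta)| \ge 2$ because (1) guarantees that $\Theta$ sends non-constants to non-constants. These two facts together force $\im(\alpha\Theta) = \{a\theta, b\theta\} = (\im(\alpha))\theta$.

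Part (6) will be the main obstacle. The cases $|Y| \le 2$ are trivial, so I focus on $|Y| \ge 3$ and first show $\theta$ is monotone or anti-monotone on each three-element subset $\{a,b,c\}$ of $Y$ with $a < b < c$. To analyze such a triple I introduce \emph{two} idempotents $\eta, \eta' \in \O(X,Y)$, both with image $\{a,c\}$: $\eta(x) = a$ for $x < c$ and $\eta(x) = c$ for $x \ge c$ (so $\eta(b) = a$), and $\eta'(x) = a$ for $x \le a$ and $\eta'(x) = c$ for $x > a$ (so $\eta'(b) = c$). By (5), $\eta\Theta$ and $\eta'\Theta$ are order-preserving idempotents of $\O(X',Y')$ with image $\{a\theta, c\theta\}$, and by (3) they satisfy $(b\theta)(\eta\Theta) = a\theta$ and $(b\theta)(\eta'\Theta) = c\theta$. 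An order-preserving idempotent with two-element image partitions $X'$ into an initial and a final segment around a threshold; splitting into the sub-cases $a\theta < c\theta$ and $a\theta > c\theta$ and locating $b\theta$ with respect to the two thresholds coming from $\eta\Theta$ and $\eta'\Theta$ then pins down $a\theta < b\theta < c\theta$ in the first sub-case and $a\theta > b\theta > c\theta$ in the second. A short combinatorial lemma then upgrades triple-monotonicity to global (anti-)monotonicity: if a preserving pair and a reversing pair of $\theta$ both existed, one could enclose them in a common triple whose forced uniform orientation would contradict the orientations of its constituent pairs. The delicate point is the necessity of using \emph{two} distinct idempotents: a single idempotent of image $\{a,c\}$ cannot distinguish the two possible positions of $b\theta$ relative to $a\theta$ and $c\theta$.
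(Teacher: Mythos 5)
Your parts (1)--(5) follow the paper's proof essentially verbatim: the paper identifies the constants $\X_x$, $x\in Y$, exactly as you do, via the identity $\alpha\X_x=\X_x$ and surjectivity of $\Theta$ (note only that with the paper's right-action convention these are \emph{right} zeros rather than left zeros, and that the paper's idempotent in (5) uses the threshold $x\le a$ instead of $x<b$ --- both work). Part (6) is where you genuinely diverge. The paper's argument is a direct two-pair transport: for $a<b$ and $c<d$ in $Y$ it takes the single order-preserving step map $x\alpha=c$ for $x\le a$ and $x\alpha=d$ otherwise, so that by (3) $\alpha\Theta$ sends $a\theta\mapsto c\theta$ and $b\theta\mapsto d\theta$; order-preservation of $\alpha\Theta$ and injectivity of $\theta$ then give $a\theta<b\theta$ if and only if $c\theta<d\theta$, i.e.\ every pair carries the same orientation, and the conclusion follows with no case analysis and no auxiliary combinatorial lemma. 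Your route --- locating $b\theta$ between $a\theta$ and $c\theta$ by means of the two idempotents $\eta,\eta'$ with image $\{a,c\}$ and the initial/final-segment structure of order-preserving idempotents of rank two --- is correct but costlier: it requires the sub-case split on the relative order of $a\theta$ and $c\theta$, and it only yields monotonicity on triples, so you still owe the patching step. On that step your phrasing is slightly off: a preserving pair and a reversing pair span up to four points and so cannot in general be ``enclosed in a common triple''; one must chain overlapping triples (order the four points $p_1<p_2<p_3<p_4$ and compare $\{p_1,p_2,p_3\}$ with $\{p_2,p_3,p_4\}$ through their shared pair). This is routine and true, so it is not a genuine gap, but the paper's pair-based argument shows the whole triple analysis can be bypassed. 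What your approach buys is a more structural view of how the rank-two idempotents sit inside $\O(X',Y')$, at the price of length and an extra lemma.
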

\begin{proof}
Let $x\in Y$. Then, for all $\alpha\in\O(X,Y)$, we have $\alpha\X_x=\X_x$ and so $(\alpha\Theta)(\X_x\Theta)=\X_x\Theta$. Since $\Theta$ is surjective, it follows that $\beta(\X_x\Theta)=\X_x\Theta$, for $\beta\in\O(X',Y')$. Take any element $z\in Y'$. 
Then $\X'_{(z)(\X_x\Theta)}=\X'_z(\X_x\Theta)=\X_x\Theta$ and thus $\X_x\Theta=\X'_{x'}$, for some (unique, since $\Theta$ is a function) $x'\in Y'$. 

Therefore, we have a well defined function $\theta:Y\longrightarrow Y'$ satisfying the equality $\X_x\Theta=\X'_{x\theta}$, for all $x\in Y$. 
A similar reasoning applied to the inverse isomorphism 
$\Theta^{-1}:\O(X',Y')\longrightarrow\O(X,Y)$ allows us to show the existence of a function $\theta':Y'\longrightarrow Y$ satisfying the equality $\X'_{x'}\Theta^{-1}=\X_{x'\theta'}$, for all $x'\in Y'$. Now, we have $\X_x=\X_x\Theta\Theta^{-1}= \X'_{x\theta}\Theta^{-1}=\X_{x\theta\theta'}$, for all $x\in Y$, 
and similarly $\X'_{x'}=\X'_{x'\theta'\theta}$, for all $x'\in Y'$. Thus $\theta$ and $\theta'$ are mutually inverse bijections. 

Next, we prove property 3. Let $x\in Y$ and $\alpha\in\O(X,Y)$. 
Then $\X'_{(x\theta)(\alpha\Theta)}=\X'_{x\theta}(\alpha\Theta)=
(\X_x\Theta)(\alpha\Theta)=(\X_x\alpha)\Theta=\X_{x\alpha}\Theta=\X'_{(x\alpha)\theta}$ and so $(x\theta)(\alpha\Theta)=(x\alpha)\theta$. 

In order to prove 4, let us take $\alpha\in\O(X,Y)$. 
If $x'\in \fix(\alpha\Theta)$ then $x'=x\theta$, for some $x\in Y$, and 
$(x\alpha)\theta=(x\theta)(\alpha\Theta) = x'(\alpha\Theta)= x' = x\theta$, 
whence $x\alpha=x$, since $\theta$ is injective, and so $x'=x\theta\in(\fix(\alpha))\theta$.   
Conversely, if $x\in\fix(\alpha)$ then $(x\theta)(\alpha\Theta)=(x\alpha)\theta=x\theta$, 
i.e. $x\theta\in\fix(\alpha\Theta)$. Thus $\fix(\alpha\Theta)=(\fix(\alpha))\theta$. 

The second statement of 4 follows immediately from the fact that $\im(\alpha)=\fix(\alpha)$, for any idempotent $\alpha\in\T(X)$. 

Regarding 5, 
let $\alpha\in\O(X,Y)$ be such that $\im(\alpha)=\{a<b\}$ and 
define $\varepsilon\in\T(X)$ by $x\varepsilon=a$, if $x\le a$, and $x\varepsilon=b$, otherwise. Clearly, $\im(\varepsilon)=\im(\alpha)$, $\varepsilon^2=\varepsilon$ and $\varepsilon\in\O(X,Y)$. 
Moreover, $\alpha\varepsilon=\alpha$, whence 
$(\alpha\Theta)(\varepsilon\Theta)=\alpha\Theta$ and so 
$\im(\alpha\Theta)\subseteq\im(\varepsilon\Theta)=
(\im(\varepsilon))\theta=(\im(\alpha))\theta$. 
Now, since $\alpha$ is non-constant, then $\alpha\Theta$ is non-constant. 
Hence $|\im(\alpha\Theta)|\ge2=|(\im(\alpha))\theta|$ and thus $\im(\alpha\Theta)=(\im(\alpha))\theta$. 

Finally, we prove 6. We may suppose that $|Y|\ge1$ 
(in fact, we may even suppose that $|Y|\ge2$). 
Let $a,b,c,d\in Y$ be such that $a<b$ and $c<d$. Define $\alpha\in\T(X)$ by $x\alpha=c$, if $x\le a$, and $x\alpha=d$, otherwise. Clearly, $\alpha\in\O(X,Y)$. Moreover, 
$(a\theta)(\alpha\Theta)=(a\alpha)\theta=c\theta$ and    
$(b\theta)(\alpha\Theta)=(b\alpha)\theta=d\theta$, whence $a\theta<b\theta$ if and only if $c\theta<d\theta$, since $\alpha\Theta$ is order-preserving and $\theta$ is injective. Therefore, $\theta$ is either an order-isomorphism or an order-anti-isomorphism, as required. 
\end{proof}

Now, notice that, if $X$ and $X'$ are two chains, $\varphi:X\longrightarrow X'$ 
either an order-isomorphism or an order-anti-isomorphism
and $Y$ and $Y'$ nonempty subsets of $X$ and $X'$, respectively, such that 
$Y\varphi=Y'$, then it is a routine matter to show that the map $\Theta:\O(X,Y)\longrightarrow\O(X',Y')$ defined by $\alpha\Theta=\varphi^{-1}\alpha\varphi$, 
for all $\alpha\in\O(X,Y)$, is an isomorphism. By combining this fact together with property 5 of Lemma \ref{basic}, we immediately get the following well known result, already recalled: 

\begin{corollary}
Let $X$ and $X'$ be two chains. Then the monoids $\O(X)$ and $\O(X')$ are isomorphic if and only if $X$ and $X'$ are either order-isomorphic or order-anti-isomorphic. 
\end{corollary}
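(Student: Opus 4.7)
The corollary is an equivalence, so I would treat the two directions separately, and in fact both follow very quickly from machinery already assembled in the excerpt.

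For the ``if'' direction, suppose $\varphi:X\longrightarrow X'$ is either an order-isomorphism or an order-anti-isomorphism. Take $Y=X$ and $Y'=X'$, so trivially $Y\varphi=Y'$. The remark immediately preceding the corollary states that under these hypotheses the map $\Theta:\O(X,Y)\longrightarrow\O(X',Y')$ defined by $\alpha\Theta=\varphi^{-1}\alpha\varphi$ is an isomorphism. Specializing $Y=X$ and $Y'=X'$ yields an isomorphism $\O(X)\longrightarrow\O(X')$, as required.

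For the ``only if'' direction, suppose $\Theta:\O(X)\longrightarrow\O(X')$ is an isomorphism. The plan is to invoke Lemma \ref{basic} with the choices $Y=X$ and $Y'=X'$, which is legitimate since $\O(X,X)=\O(X)$ and $\O(X',X')=\O(X')$. By parts 1 and 2 of Lemma \ref{basic}, $\Theta$ induces a bijection $\theta:X\longrightarrow X'$ determined by the action of $\Theta$ on the constant transformations. By part 6 of the same lemma, this bijection $\theta:X\longrightarrow X'$ is either an order-isomorphism or an order-anti-isomorphism of $X$ onto $X'$. This immediately gives that $X$ and $X'$ are order-isomorphic or order-anti-isomorphic, concluding the proof.

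Since the corollary is essentially a direct extraction of information already packaged into Lemma \ref{basic} (specifically property 6) and the conjugation remark, there is no substantive obstacle; the only thing to verify is that the hypotheses of the lemma and the remark apply with $Y=X$ and $Y'=X'$, which is automatic. Thus the proof should be at most a few lines of LaTeX consolidating these two citations.
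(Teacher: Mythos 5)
Your proof is correct and follows essentially the same route as the paper: the ``if'' direction via the conjugation map $\alpha\mapsto\varphi^{-1}\alpha\varphi$ from the remark preceding the corollary, and the ``only if'' direction by specializing Lemma \ref{basic} to $Y=X$, $Y'=X'$. The paper's text actually cites property 5 of that lemma, but property 6 (the induced bijection $\theta$ is an order-isomorphism or order-anti-isomorphism) is the one that does the work, exactly as you use it.
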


On the other hand, we show next that, for finite chains $X$ and $X'$ the converse of the 
aforementioned property also is valid for non-trivial subchains $Y$ and $Y'$ of $X$ and $X'$, respectively. 
Notice that, if $|Y|=|Y'|=1$ then the semigroups $\O(X,Y)$ and $\O(X',Y')$ are always trivial (even with $X$ or $X'$ infinite) and so isomorphic.  

Observe also that, if $X$ and $X'$ are two finite chains with the same size, then there exists a unique order-isomorphism $\iota:X\longrightarrow X'$ and a unique order-anti-isomorphism $\sigma:X\longrightarrow X'$. Furthermore, if $X=\{x_1<x_2<\cdots<x_n\}$ and $X'=\{x'_1<x'_2<\cdots<x'_n\}$, for some $n\in\N$, then $x_i\iota=x'_i$ and 
$x_i\sigma=x'_{n-i+1}$, for all $i\in\{1,2,\ldots,n\}$. 

\begin{theorem}\label{finiteiso}
Let $X$ and $X'$ be two finite chains and let $Y$ and $Y'$ be nonempty subsets of $X$ and $X'$, respectively. Then the semigroups $\O(X,Y)$ and $\O(X',Y')$ are isomorphic if and only if one of the following conditions holds: 
\begin{enumerate}
\item $|Y|=|Y'|=1$; 
\item $|X|=|X'|$ and $Y\iota=Y'$, where $\iota:X\longrightarrow X'$ is the (unique) order-isomorphism; 
\item $|X|=|X'|$ and $Y\sigma=Y'$, where $\sigma:X\longrightarrow X'$ is the (unique) order-anti-isomorphism.  
\end{enumerate}
\end{theorem}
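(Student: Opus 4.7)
The plan is to handle the easy direction first and then tackle the hard converse. The implication ``(1) or (2) or (3) $\Rightarrow$ $\O(X,Y)\cong\O(X',Y')$'' is immediate: in case~(1) both monoids are trivial, while in cases~(2) and~(3) the conjugation $\alpha\mapsto\iota^{-1}\alpha\iota$, respectively $\alpha\mapsto\sigma^{-1}\alpha\sigma$, is an isomorphism by the construction recorded just above the preceding corollary.

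For the converse, let $\Theta:\O(X,Y)\to\O(X',Y')$ be an isomorphism. By Lemma~\ref{basic}, $\Theta$ induces a bijection $\theta:Y\to Y'$ that is either an order-isomorphism or an order-anti-isomorphism. If $|Y|=1$ then $|Y'|=1$ and case~(1) holds, so assume $|Y|=k\ge 2$ and write $Y=\{y_1<\cdots<y_k\}$ and $Y'=\{y'_1<\cdots<y'_k\}$; then $y_i\theta=y'_i$ in the order-preserving case and $y_i\theta=y'_{k-i+1}$ in the order-anti-isomorphism case. The strategy is to recover $|X|$ and the ``gap sizes''
$$
n_0=|\{x\in X\mid x<y_1\}|,\quad n_i=|\{x\in X\mid y_i<x<y_{i+1}\}|\ (1\le i\le k-1),\quad n_k=|\{x\in X\mid x>y_k\}|,
$$
together with the analogous $n'_0,\ldots,n'_k$ for $(X',Y')$, from counts of elements of $\O(X,Y)$ that are invariant under $\Theta$ by parts~3, 4 and 5 of Lemma~\ref{basic}.

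For any $a<b$ in $Y$, the two-image maps $\alpha\in\O(X,Y)$ with $\im(\alpha)=\{a,b\}$ are parametrized by their ``cut point'' $\max\alpha^{-1}(a)\in X\setminus\{\max X\}$, so there are exactly $|X|-1$ of them; by part~5 of Lemma~\ref{basic}, $\Theta$ sends this family bijectively onto the corresponding one for $\{a\theta,b\theta\}$ in $\O(X',Y')$, whence $|X|=|X'|$. Next, for $1\le i\le k-1$ the idempotents of $\O(X,Y)$ with image exactly $\{y_i,y_{i+1}\}$ are those whose cut lies in $[y_i,y_{i+1})\cap X$, a set of size $1+n_i$; transporting this count through $\Theta$ forces $n_i=n'_i$ in the order-preserving case and $n_i=n'_{k-i}$ in the anti-isomorphism case, so every interior gap matches. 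For the edge gaps, I would count the $\alpha\in\O(X,Y)$ with $\im(\alpha)=\{y_1,y_2\}$, $y_2\alpha=y_2$ and $y_1\alpha=y_2$ (equivalently, cut $x_0<y_1$): there are precisely $n_0$ such maps, and a symmetric characterization using $\{y_{k-1},y_k\}$ with $y_{k-1}\alpha=y_{k-1}$ and $y_k\alpha=y_{k-1}$ yields $n_k$; part~3 of Lemma~\ref{basic} transports the fixing conditions across $\Theta$, so these edge gaps match as well.

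Once every gap equals its appropriately indexed primed counterpart and $|X|=|X'|$, the position of each $y_i$ inside the finite chain $X$ must coincide with that of $y_i\theta$ inside $X'$, which is exactly $Y\iota=Y'$ in the order-preserving case (case~(2)) and $Y\sigma=Y'$ in the order-anti-isomorphism case (case~(3)). The main obstacle is that $\Theta$ only accesses $Y$ directly through $\theta$, so every piece of information about $X\setminus Y$ has to be reconstructed through elements of $\O(X,Y)$ whose images lie in $Y$; the interior gaps come out cleanly from idempotent counts, but the edge gaps $n_0$ and $n_k$ are invisible to any count restricted to idempotents, and the delicate step is to identify them as the non-idempotent two-image counts above, whose correctness rests on the transfer of fixing behaviour provided by Lemma~\ref{basic}.
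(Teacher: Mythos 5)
Your proposal is correct and follows essentially the same strategy as the paper: both arguments transport counts of rank-two transformations (distinguished by their image and their fixed points, via parts 3--5 of Lemma \ref{basic}) through the isomorphism $\Theta$ to recover the positions of the elements of $Y$ inside $X$. The only differences are bookkeeping ones --- you count all two-image maps to get $|X|=|X'|$ at the outset and use consecutive pairs $\{y_i,y_{i+1}\}$ to read off the gap sizes, whereas the paper anchors its three families at $\{x_{i_1},x_{i_t}\}$ and $\{x_{i_1},x_{i_k}\}$ and deduces $|X|=|X'|$ at the end --- and both versions of the edge-gap and interior-gap counts check out.
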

\begin{proof}
Based on the above, if either condition 1, 2 or 3 is satisfied then the semigroups $\O(X,Y)$ and $\O(X',Y')$ are isomorphic. Therefore, conversely assume there exists an isomorphism $\Theta:\O(X,Y)\longrightarrow\O(X',Y')$ and let $\theta:Y\longrightarrow Y'$ be the order-isomorphism or order-anti-isomorphism induced by $\Theta$ as given by Lemma \ref{basic}. 

Take $X=\{x_1<x_2<\cdots<x_m\}$ and $X'=\{x'_1<x'_2<\cdots<x'_n\}$, for some $m,n\in\N$,
and $Y=\{x_{i_1}<x_{i_2}<\cdots<x_{i_k}\}$ and $Y'=\{x'_{j_1}<x'_{j_2}<\cdots<x'_{j_k}\}$, for some $1\le k\le\min\{m,n\}$. If $k=1$ then condition 1 holds. Hence, from now on, we suppose that $k\ge2$. 

Next, notice that $x_{i_t}\theta=x'_{j_t}$, for all $t\in\{1,2,\ldots,k\}$, if $\theta$ is an order-isomorphism, and $x_{i_t}\theta=x'_{j_{k-t+1}}$, for all $t\in\{1,2,\ldots,k\}$, otherwise. 

Let $2\le t\le k$. Let 
$$
A_1=\{\alpha\in\O(X,Y)\mid \alpha=\alpha^2\;\text{and}\;\im(\alpha)=\{x_{i_1}<x_{i_t}\}\}
$$
and
$$ 
B_1=\{\beta\in\O(X',Y')\mid \beta=\beta^2\;\text{and}\;\im(\beta)=\{x_{i_1}\theta, x_{i_t}\theta\}\}.
$$
Then, by Lemma \ref{basic}, $A_1\Theta=B_1$. Moreover, we have 
\begin{equation}\label{ab1}
|A_1|=i_t-i_1
\quad\text{and}\quad 
|B_1|=\left\{\begin{array}{ll}
     j_t-j_1 & \mbox{if $\theta$ is an order-isomorphism}\\
     j_k-j_{k-t+1} & \mbox{otherwise}. 
    \end{array}\right.
\end{equation}
Next, let 
$$
A_2=\{\alpha\in\O(X,Y)\mid \alpha\neq\alpha^2,\; 
\im(\alpha)=\{x_{i_1}<x_{i_k}\}
\;\text{and}\;
x_{i_k}\in\fix(\alpha)\}
$$
and
$$ 
B_2=\{\beta\in\O(X',Y')\mid \beta\neq\beta^2,\;
\im(\beta)=\{x_{i_1}\theta, x_{i_k}\theta\}
\;\text{and}\;
x_{i_k}\theta\in\fix(\beta)\}.
$$
Again by Lemma \ref{basic}, we get $A_2\Theta=B_2$. Regarding the sizes, we have 
\begin{equation}\label{ab2}
|A_2|=i_1-1
\quad\text{and}\quad 
|B_2|=\left\{\begin{array}{ll}
     j_1-1 & \mbox{if $\theta$ is an order-isomorphism}\\
     n-j_k & \mbox{otherwise}. 
    \end{array}\right.
\end{equation}
Finally, let 
$$
A_3=\{\alpha\in\O(X,Y)\mid \alpha\neq\alpha^2,\; 
\im(\alpha)=\{x_{i_1}<x_{i_k}\}
\;\text{and}\;
x_{i_1}\in\fix(\alpha)\}
$$
and
$$ 
B_3=\{\beta\in\O(X',Y')\mid \beta\neq\beta^2,\;
\im(\beta)=\{x_{i_1}\theta, x_{i_k}\theta\}
\;\text{and}\;
x_{i_1}\theta\in\fix(\beta)\}.
$$
Once again by Lemma \ref{basic}, we obtain $A_3\Theta=B_3$. In this case, we have  
\begin{equation}\label{ab3}
|A_3|=m-i_k
\quad\text{and}\quad 
|B_3|=\left\{\begin{array}{ll}
     n-j_k & \mbox{if $\theta$ is an order-isomorphism}\\
     j_1-1 & \mbox{otherwise}. 
    \end{array}\right.
\end{equation}

Now, we analyze the equalities $|A_1|=|B_1|$, $|A_2|=|B_2|$ and $|A_3|=|B_3|$,
by considering two cases. 

First, suppose that $\theta$ is an order-isomorphism. Then, by (\ref{ab2}), we have 
$i_1=j_1$. By (\ref{ab1}) follows that $i_t=j_t$,  for $2\le t\le k$, and so  
by using (\ref{ab3}) we also deduce that $m=n$. Thus, in this case, condition 2 holds. 

Finally, we suppose that $\theta$ is an order-anti-isomorphism. 
Therefore, by (\ref{ab2}), we have $j_k=n-i_1+1$. Next, by (\ref{ab1}) and (\ref{ab2}), 
it follows that $j_{k-t+1}=j_k-i_t+i_1=(n-i_1+1)-i_t+i_1=n-i_t+1$, for $2\le t\le k$. 
In particular, $j_1=n-i_k+1$, from which follows, by using (\ref{ab3}), that 
$n=(j_1-1)+i_k=(m-i_k)+i_k=m$. Thus, we have $m=n$ and 
$j_{k-t+1}=n-i_t+1$, for $1\le t\le k$, and so, in this case, condition 3 holds, as required. 
\end{proof}

\smallskip 

Observe that, if $X$ is a finite chain, it is clear that the number of order-preserving mappings from $X$ into $Y$ coincides with   
the number of combinations of $|Y|$ objects
taken $|X|$ at a time, repetitions being permitted, 
i.e. 
$$
|\O(X,Y)|=\binom{|X|+|Y|-1}{|Y|-1}
$$ 
(see \cite{Hall:1967}). 

\smallskip 

From the above results, in order to study the semigroups with restricted range $\O(X,Y)$, with $X$ a finite chain, it suffices to consider the semigroups $\O_n(Y)$, with $Y$ a subchain of $\{1<2<\cdots<n\}$ and $n\in\N$. 
Let us denote by $\sigma$ the permutation that reflects $\{1<2<\cdots<n\}$ (the unique order-anti-automorphism of $\{1<2<\cdots<n\}$), i.e. 
$$
\sigma =
     \left(
       \begin{array}{ccccc}
                1 & 2 & \cdots & n-1 & n\\
               n & n-1 & \cdots & 2 & 1\\
       \end{array}
     \right).
$$ 
Following along this line, we may rewrite Theorem \ref{finiteiso}. 

\begin{corollary}
Let $n\in\N$ and let $Y$ and $Z$ be nonempty subsets of $\{1,2,\ldots,n\}$.  
Then the subsemigroups $\O_n(Y)$ and $\O_n(Z)$ of $\O_n$ are isomorphic if and only if 
$|Y|=|Z|=1$ or $Y=Z$ or $Y\sigma=Z$.  
\end{corollary}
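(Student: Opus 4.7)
The plan is to deduce this corollary as an immediate specialization of Theorem \ref{finiteiso} to the case $X = X' = \{1 < 2 < \cdots < n\}$. In this setting the unique order-isomorphism $\iota: X \to X'$ is simply the identity map, so the condition $Y\iota = Z$ collapses to $Y = Z$; and the unique order-anti-isomorphism of $X$ onto itself is precisely the permutation $\sigma$ displayed just before the corollary, so the condition $Y\sigma = Z$ there is the same as the one appearing here. Thus the three conditions of Theorem \ref{finiteiso}, when applied with both chains equal to $\{1,\ldots,n\}$, translate exactly to the three conditions in the statement of the corollary.

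For the sufficiency direction I would simply exhibit the isomorphism in each case: if $|Y| = |Z| = 1$, both semigroups consist of a single constant and are therefore trivially isomorphic; if $Y = Z$, the two semigroups are literally equal; and if $Y\sigma = Z$, the map $\alpha \mapsto \sigma^{-1}\alpha\sigma$ from $\O_n(Y)$ to $\O_n(Z)$ is an isomorphism, as already observed in the paragraph preceding Theorem \ref{finiteiso} (taking $\varphi = \sigma$, which is an order-anti-isomorphism of $\{1,\ldots,n\}$ onto itself sending $Y$ to $Z$). For the necessity, any isomorphism $\O_n(Y) \cong \O_n(Z)$ is in particular an isomorphism $\O(X,Y) \cong \O(X',Z)$ for $X = X' = \{1,\ldots,n\}$, so Theorem \ref{finiteiso} forces one of its three conditions to hold, and via the translation above this yields exactly one of $|Y|=|Z|=1$, $Y = Z$, or $Y\sigma = Z$.

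No real obstacle arises: the substantive content is already contained in Theorem \ref{finiteiso}, and this corollary amounts to rephrasing it in the standard notation $\O_n(Y)$ where a single ambient chain $\{1<2<\cdots<n\}$ is fixed throughout.
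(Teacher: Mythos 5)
Your proposal is correct and matches the paper's approach exactly: the paper presents this corollary as an immediate rewriting of Theorem \ref{finiteiso} for the single ambient chain $\{1<2<\cdots<n\}$, where the unique order-isomorphism is the identity and the unique order-anti-isomorphism is $\sigma$. Nothing further is needed.
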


For infinite chains $X$ and $X'$, a result similar to Theorem \ref{finiteiso} is not true in general, i.e. we may have isomorphic semigroups $\O(X,Y)$ and $\O(X',Y')$, with $Y$ and $Y'$ non-trivial subchains of $X$ and $X'$, respectively, without $X$ and $X'$ being either order-isomorphic or order-anti-isomorphic (although $Y$ and $Y'$ must be either order-isomorphic or order-anti-isomorphic, by Lemma \ref{basic}). An example with $|Y|=|Y'|=2$ is presented below. 

We notice that Jitjankarn claims in the paper (preprint) \cite{Jitjankarn:2012} that, for $|Y|\ge5$, the semigroups $\O(X,Y)$ and $\O(X',Y')$ are isomorphic if and only if there exists either an order-isomorphism or an order-anti-isomorphism 
$\varphi:X\longrightarrow X'$ such that $Y\varphi=Y'$. This result was obtained independently and is \textit{almost} a more general result than Theorem \ref{finiteiso}. Its proof is also, naturally, longer and much more elaborate than ours. 
Despite that, the cases $|Y|=|Y'|=3$ and $|Y|=|Y'|=4$ (for $X$ and $X'$ infinite) seem to remain an open problem. 

\begin{example}\em 
Consider the chain with $2$ elements 
$
\U=\{\overline1<\overline2\} 
$
and the chains 
$
\Nn=\N_0\oplus\U
$
(where $\N_0$ is equipped with the usual order and $m < \overline 1$, for all $m\in\N_0$) and 
$
\Zn=\Z\oplus\U
$
(where $\Z$ is equipped with the usual order and $m < \overline 1$, for all $m\in\Z$). 
Notice that a typical element of $\O(\Nn,\U)$ is of the form
\begin{equation}\label{typicN}
\alpha_n=\left(
\begin{array}{c|c|cc}
0\,\cdots\, n & n+1\,\cdots\, {+\infty} & \overline1 & \overline2 \\
\overline1 & \overline2 & \overline2 & \overline2 
\end{array}
\right) 
\quad\text{or}\quad 
\alpha_{i,j,k}=\left(
\begin{array}{c|cc}
0\,\cdots\, {+\infty} & \overline1 & \overline2 \\
\overline{i}  & \overline{j} & \overline{k}
\end{array}
\right), 
\end{equation}
with $n\in\N_0$ and $1\le i\le j\le k\le 2$, and 
a typical element of $\O(\Zn,\U)$ is of the form
\begin{equation}\label{typicZ}
\beta_m=\left(
\begin{array}{c|c|cc}
{-\infty}\,\cdots\, m & m+1\,\cdots\, {+\infty} & \overline1 & \overline2 \\
\overline1 & \overline2 & \overline2 & \overline2 
\end{array}
\right) 
\quad\text{or}\quad 
\beta_{i,j,k}=\left(
\begin{array}{c|cc}
{-\infty}\,\cdots\, {+\infty} & \overline1 & \overline2 \\
\overline{i}  & \overline{j} & \overline{k}
\end{array}
\right), 
\end{equation} 
with $m\in\Z$ and $1\le i\le j\le k\le 2$. 

Consider the map $f:\N_0\longrightarrow\Z$ defined by 
$$
f(n)=\left\{\begin{array}{ll}
             \frac{n+1}{2} & \mbox{if $n$ is odd}\\
             -\frac{n}{2} & \mbox{otherwise,}
            \end{array}\right.
$$
which is, clearly, a bijection. 

Next, let $\Theta:\O(\Nn,\U)\longrightarrow\O(\Zn,\U)$ be the map 
defined by 
$$
\alpha_n\Theta=\beta_{f(n)} 
\quad\text{and}\quad
\alpha_{i,j,k}\Theta=\beta_{i,j,k}~, 
$$
for all $n\in\N_0$ and $1\le i\le j\le k\le 2$.
Clearly, $\Theta$ is bijective. Moreover, it is also a routine matter to show that $\Theta$ is a homomorphism. 

Therefore, $\O(\Nn,\U)$ and $\O(\Zn,\U)$ are isomorphic semigroups, despite $\Nn$ and $\Zn$ are neither order-isomorphic nor order-anti-isomorphic (since $\Nn$ has both maximum and minimum elements, while $\Zn$ only has maximum element). 
\end{example}

\section{On the semigroups $\O_n(Y)$}\label{rank}

The main objective of this section is to determine the ranks of the semigroups $\O_n(Y)$,  for all nonempty subset $Y$ of $\{1,2,\ldots,n\}$. 
Recall that the rank of a finite semigroup  
is the cardinality of a least-size generating set.  

We begin by presenting some basic structural properties. 

It is a well known fact that $\O_n$ is a regular semigroup \cite{Gomes&Howie:1992}. Therefore, as a particular instance of Mora and Kemprasit's results \cite[Theorems 3.1 and 3.6]{Mora&Kemprasit:2010}, we immediately have:  

\begin{theorem}
Let $Y$ be a nonempty subset of $\{1,2,\ldots,n\}$. 
Then 
$
\reg(\O_n(Y))=\{\alpha\in\O_n(Y)\mid \im(\alpha)=Y\alpha\}.
$
Moreover, $\O_n(Y)$ is a regular semigroup if and only if $Y=\{1,2,\ldots,n\}$ or $|Y|=1$ or $Y=\{1,n\}$. 
\end{theorem}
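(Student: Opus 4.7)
The first equality is immediate from Theorem \ref{regOXY} applied to $X=\{1<2<\cdots<n\}$: since $\O_n$ is classically known to be regular, $\reg(\O(X))=\O_n$ and the intersection in Theorem \ref{regOXY} collapses, yielding $\reg(\O_n(Y))=\{\alpha\in\O_n(Y)\mid\im(\alpha)=Y\alpha\}$. Consequently, $\O_n(Y)$ is regular exactly when the condition $\im(\alpha)=Y\alpha$ holds for every $\alpha\in\O_n(Y)$, and the second assertion reduces to identifying those $Y$ for which this condition is automatic.

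Sufficiency I would dispatch quickly. If $Y=\{1,\ldots,n\}$ the equality $\im(\alpha)=Y\alpha$ is trivial; if $|Y|=1$ the semigroup $\O_n(Y)$ is itself trivial; and if $Y=\{1,n\}$ the only non-constant situation is $\im(\alpha)=\{1,n\}$, in which order-preservation of $\alpha$ forces $1\alpha=1$ and $n\alpha=n$, so $Y\alpha=\{1,n\}=\im(\alpha)$.

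For necessity, writing $Y=\{y_1<\cdots<y_k\}$ with $k\geq 2$, $Y\neq\{1,n\}$ and $Y\neq\{1,\ldots,n\}$, I would exhibit an explicit $\alpha\in\O_n(Y)$ with $\im(\alpha)\neq Y\alpha$ in three exhaustive cases. (a) If $1\notin Y$, take $1\alpha=y_1$ and $x\alpha=y_2$ for $x\geq 2$; then $\im(\alpha)=\{y_1,y_2\}$ but $Y\alpha=\{y_2\}$, since every $y_i\geq y_1\geq 2$. (b) If $n\notin Y$, use the mirror construction $x\alpha=y_{k-1}$ for $x\leq n-1$ and $n\alpha=y_k$. (c) Otherwise $\{1,n\}\subseteq Y$; then $Y\neq\{1,n\}$ forces $k\geq 3$, while $Y\neq\{1,\ldots,n\}$ provides an index $j$ with $y_{j+1}>y_j+1$. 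If $j\leq k-2$, I would use the three-level step map sending $[1,y_j]$ to $y_j$, the non-empty integer interval $(y_j,y_{j+1})$ to $y_{j+1}$, and $[y_{j+1},n]$ to $y_{j+2}$; a direct check then yields $y_{j+1}\in\im(\alpha)\setminus Y\alpha$. If the only available gap sits at $j=k-1$, the symmetric three-level construction with levels $y_{k-2},y_{k-1},y_k$ does the job.

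The main obstacle is case (c): one must arrange three consecutive levels of $Y$ to be simultaneously available around the chosen gap, and this is exactly what $k\geq 3$ together with the dichotomy $j\leq k-2$ versus $j=k-1$ makes possible. Once the correct $j$ has been selected, the remaining verifications (order-preservation of $\alpha$, and the comparison of $\im(\alpha)$ with $Y\alpha$) are routine.
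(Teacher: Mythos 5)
Your argument is correct, and it is worth noting how it relates to the paper: for the first assertion you follow exactly the paper's route (since $\O_n$ is regular, Theorem \ref{regOXY} immediately collapses to $\reg(\O_n(Y))=\{\alpha\in\O_n(Y)\mid\im(\alpha)=Y\alpha\}$), but for the second assertion the paper supplies no argument at all --- it is presented as ``a particular instance of Mora and Kemprasit's results'' and delegated to their Theorem 3.6. Your contribution is therefore to make that half self-contained, and the details check out. The sufficiency cases are fine; in particular, for $Y=\{1,n\}$ order-preservation does force $1\alpha=1$ and $n\alpha=n$ whenever $\im(\alpha)=\{1,n\}$, so $Y\alpha=\im(\alpha)$. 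The three witness constructions for necessity also verify: in (a) every element of $Y$ is at least $2$, so $Y\alpha=\{y_2\}$ misses $y_1\in\im(\alpha)$; (b) is the mirror image; and in (c) the middle level $y_{j+1}$ of the three-step map is attained only on the integer interval strictly between $y_j$ and $y_{j+1}$, which is nonempty because of the gap but disjoint from $Y$ because $y_j$ and $y_{j+1}$ are consecutive in $Y$, whence $y_{j+1}\in\im(\alpha)\setminus Y\alpha$; the dichotomy $j\le k-2$ versus $j=k-1$, together with $k\ge3$, guarantees that a third level of $Y$ is available on the appropriate side. What your approach buys is independence from the external reference; what the paper's citation buys is brevity.
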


Next, notice that, as $\O_n$ is $\mathscr{H}$-trivial \cite{Gomes&Howie:1992}, then  $\O_n(Y)$ is also $\mathscr{H}$-trivial. 
Regarding the remaining Green's relations on $\O_n(Y)$, 
it is easy to show that Propositions \ref{LOXY}, \ref{ROXY} and \ref{DOXY} may be rephrased as follows: 

\begin{theorem}
Let $Y$ be a nonempty subset of $\{1,2,\ldots,n\}$. 
Let $\alpha,\beta\in\O_n(Y)$. Then: 
\begin{enumerate}
\item $\alpha\mathscr{L}\beta$ in  $\O_n(Y)$ if and only if either $\alpha=\beta$ or $\alpha,\beta\in \reg(\O_n(Y))$ and $\im(\alpha)=\im(\beta)$; 
\item $\alpha\mathscr{R}\beta$ in  $\O_n(Y)$ if and only if $\ker(\alpha)=\ker(\beta)$; 
\item $\alpha\mathscr{D}\beta$ in  $\O_n(Y)$ if and only if either (i) $\alpha,\beta\in \reg(\O_n(Y))$ and $|\im(\alpha)|=|\im(\beta)|$ 
or (ii) $\alpha,\beta\in \O_n(Y)\setminus\reg(\O_n(Y))$ and $\ker(\alpha)=\ker(\beta)$.
\end{enumerate}
\end{theorem}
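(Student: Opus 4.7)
The plan is to deduce each of the three statements directly from Propositions \ref{LOXY}, \ref{ROXY}, and \ref{DOXY} (applied with $X=\{1<2<\cdots<n\}$, so that $\O_n(Y)=\O(X,Y)$), after recording two simplifications that hold in the finite setting.

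The first simplification is that, since $\O_n$ is a regular semigroup, Theorem \ref{regOXY} collapses to $\reg(\O_n(Y))=\{\alpha\in\O_n(Y)\mid\im(\alpha)=Y\alpha\}=\FO(X,Y)$, so every reference to $\FO(X,Y)$ in the three propositions becomes a reference to $\reg(\O_n(Y))$. The second simplification---the only point that genuinely requires an argument---is that every order-preserving function $\theta:A\longrightarrow Y$ from a subchain $A$ of $X$ into $Y$ is completable in $\O_n(Y)$. I would verify this through Proposition \ref{compl}: for each order ideal $I$ of $A$, an element of $\{y\in Y\mid a\theta\le y\le b\theta$ for all $a\in I$ and $b\in A\setminus I\}$ is supplied by $(\max I)\theta$ when $I\neq\emptyset$ (using that $\max I\le b$ for every $b\in A\setminus I$, since $I$ is an order ideal, whence $(\max I)\theta\le b\theta$) and by $(\min A)\theta$ when $I=\emptyset$ and $A\neq\emptyset$; the case $A=\emptyset$ is immediate. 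As a consequence, every order-isomorphism between subchains of $Y$ is automatically bicompletable in $\O_n(Y)$.

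With these two facts, part 1 is a direct rewriting of Proposition \ref{LOXY}. For part 2, Proposition \ref{ROXY} asserts that $\alpha\mathscr{R}\beta$ in $\O_n(Y)$ iff $\alpha=\beta$ or $\ker(\alpha)=\ker(\beta)$ together with the bicompletability of the canonical order-isomorphism; the bicompletability is now free, and the clause $\alpha=\beta$ is absorbed into $\ker(\alpha)=\ker(\beta)$, leaving exactly the stated condition. For part 3, I would analyse the three disjuncts of Proposition \ref{DOXY}: sub-case (1) falls into (i) when $\alpha\neq\beta$ are regular with common image, and when $\alpha=\beta$ it lands in (i) or (ii) according to the regularity of $\alpha$; sub-case (2) reduces to (i) because any two finite subchains of $Y$ are order-isomorphic iff they have the same cardinality, with bicompletability automatic; sub-case (3) becomes (ii) by part 2. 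The main obstacle is the verification of automatic completability, but it is short and routine given Proposition \ref{compl}.
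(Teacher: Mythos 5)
Your proposal is correct and follows exactly the route the paper intends: the paper gives no explicit proof here (it merely asserts the theorem as a rephrasing of Propositions \ref{LOXY}, \ref{ROXY} and \ref{DOXY}), and your two simplifications --- identifying $\FO(X,Y)$ with $\reg(\O_n(Y))$ via the regularity of $\O_n$, and the automatic completability of order-preserving maps into $Y$ in the finite case, checked through Proposition \ref{compl} with the witnesses $(\max I)\theta$ and $(\min A)\theta$ --- are precisely the details being left to the reader. The case analysis for part 3, in particular the use of the fact that finite subchains of $Y$ of equal cardinality are order-isomorphic, is sound.
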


Observe that, trivially, in $\O_n(Y)$ we have $\mathscr{D}=\mathscr{J}$ (since it is finite).   

\medskip 

Let $Y$ be a nonempty subset of $\{1,2,\ldots,n\}$. If $|Y|=1$ then $|\O_n(Y)|=1$ and so its rank is, trivially, equal to $1$. In the antipodes, if $|Y|=n$ then $\O_n(Y)=\O_n$, which rank (as a monoid) and size are well known to be respectively $n$ and $\binom{2n-1}{n-1}$ \cite{Gomes&Howie:1992}. 
Therefore, from now on, we suppose that $1<|Y|<n$ and take $r=|Y|$. 
Recall that we have $|\O_n(Y)|=\binom{n+r-1}{r-1}$. 

\medskip 

We say that an element $y\in Y$ is \textit{captive} if either $y\in\{1,n\}$ or $1<y<n$ and $y-1,y+1\in Y$. Denote by $Y^\sharp$ the subset of captive elements of $Y$.  

For instance, with $n=7$, we have $\{1,3,4,5\}^\sharp=\{1,4\}$, $\{2,3,4,5\}^\sharp=\{3,4\}$, $\{2,4,5,7\}^\sharp=\{7\}$, $\{1,7\}^\sharp=\{1,7\}$, 
$\{2,4,6\}^\sharp= \emptyset=\{2,3,5,6\}^\sharp$. 

This notion allows us to state our main result of this section. 

\begin{theorem}\label{main}
Let $1<r<n$ and let $Y$ be a subset of $\{1,2,\ldots,n\}$ with $r$ elements.     
Then  $\rank(\O_n(Y)) = \binom{n-1}{r-1} + |Y^\sharp|$.
\end{theorem}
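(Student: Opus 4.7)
My plan is to prove the rank formula by establishing two matching inequalities, exploiting the structural results from earlier sections.

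For the lower bound, I would first isolate the top $\mathscr{D}$-class $J=\{\alpha\in\O_n(Y)\mid\im(\alpha)=Y\}$, whose elements are exactly the order-preserving surjections $\{1,\ldots,n\}\to Y$; a standard stars-and-bars argument counts these as $\binom{n-1}{r-1}$. Every element of $J$ must lie in any generating set. Indeed, suppose $\alpha\in J$ and $\alpha=\beta_1\cdots\beta_k$ in $\O_n(Y)$. Since $|\im(\alpha\beta)|\le\min(|\im(\alpha)|,|\im(\beta)|)$, the maximality of $r$ forces each $\beta_i\in J$. A similar analysis on kernels (using that $\ker(\beta_1)\subseteq\ker(\alpha)$ and both have exactly $r$ classes) gives $\ker(\beta_1)=\ker(\alpha)$; combined with the single-$\mathscr{L}$-class description of $J$ (Proposition \ref{LOXY}) and $\mathscr{H}$-triviality of $\O_n(Y)$, this forces $\beta_1=\alpha$. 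Hence no element of $J$ is a non-trivial product of other elements of $\O_n(Y)$.

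The second half of the lower bound is the delicate part: for each $y\in Y^\sharp$, I would single out a specific element $\eta_y\in\O_n(Y)$ (for instance the constant $\X_y$ or a carefully chosen rank-$(r-1)$ idempotent of the form ``identity on $Y\setminus\{y\}$, sending $y$ to its $Y$-neighbour'') and show by a kernel/image chase that $\eta_y$ cannot be written as a non-trivial product of elements of $\O_n(Y)$ other than $\eta_y$ itself together with $J$ and the other $\eta_{y'}$. The rigidity is that captivity of $y$ rules out the existence of a ``buffer'' element of $\{1,\ldots,n\}\setminus Y$ adjacent to $y$ that could absorb values in a factorisation; every candidate factorisation ends up having to reproduce $\eta_y$ itself in a trailing factor.

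For the upper bound, I would take $J\cup\{\eta_y\mid y\in Y^\sharp\}$ as a generating set and argue by downward induction on $|\im(\alpha)|$ that every $\alpha\in\O_n(Y)$ lies in the generated subsemigroup. The base case $|\im(\alpha)|=r$ is immediate. For the induction step, given $\alpha$ with $|\im(\alpha)|=s<r$, I would choose $\beta\in J$ whose kernel refines $\ker(\alpha)$ (always possible by splitting kernel blocks of $\alpha$ into $r-s$ extra convex pieces) and then define $\gamma\in\O_n(Y)$ on $Y$ by $y\gamma=x\alpha$ for any $x\in y\beta^{-1}$, so that $\alpha=\beta\gamma$. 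The image of $\gamma$ has size at most $s<r$, so by induction $\gamma$ is in the generated subsemigroup provided one can make the extension of $\gamma$ from $Y$ to all of $\{1,\ldots,n\}$ order-preservingly, using (for non-captive $y$) the flexibility afforded by adjacent gaps of $\{1,\ldots,n\}\setminus Y$, and (for captive $y$) the generators $\eta_y$ to handle the rigid configurations.

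The main obstacle I expect is a careful bookkeeping in the induction step: one must show that the decompositions of rank-$(r-1)$ elements which involve pulling values through a non-captive $y\in Y\setminus Y^\sharp$ can always be realised inside the subsemigroup generated by $J$ alone, so that only captive $y$ genuinely demand an extra generator $\eta_y$. Matching this ``sufficiency at non-captive $y$'' with the ``necessity at captive $y$'' in the lower bound is what pins down the term $|Y^\sharp|$ exactly rather than some larger quantity like $|Y|$ or $r$.
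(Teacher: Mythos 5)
Your overall architecture --- a generating set consisting of the rank-$r$ elements $A=\{\alpha\in\O_n(Y)\mid\im(\alpha)=Y\}$ plus one rank-$(r-1)$ element per captive point, matched by a lower bound --- is the same as the paper's, and your treatment of the term $\binom{n-1}{r-1}$ (every element of $A$ is forced into any generating set, by the kernel/image argument and $\mathscr{H}$-triviality) is correct and essentially identical to Proposition \ref{ger1}. There are, however, two genuine gaps. The first concerns the lower bound for the term $|Y^\sharp|$: it cannot be obtained by showing that a \emph{specific} element $\eta_y$ lies in every generating set, because no such element exists. Even for a captive interior point $y_j$ (so $y_{j-1}=y_j-1$ and $y_{j+1}=y_j+1$), any individual element with image $Y\setminus\{y_j\}$ still factors as $\beta\gamma$ with $\beta\in A$ and $\gamma$ a \emph{different} element with image $Y\setminus\{y_j\}$ --- this is exactly the flexibility that Lemma \ref{dec} exploits. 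The correct statement, and the one the paper proves, is that every generating set $D$ must meet the set $C_j=\{\alpha\mid\im(\alpha)=Y\setminus\{y_j\}\}$ for each captive $y_j$; since these sets are pairwise disjoint and disjoint from $A$, the bound follows. Establishing $D\cap C_j\neq\emptyset$ is the delicate part: the paper picks a witness $\alpha\in C_j$ injective on $\{1,\dots,r-1\}$ and analyses the last factor of any factorisation of $\alpha$ over $D$, using the equalities $y_{j\pm1}=y_j\pm1$ to rule out that factor lying in $A$. Your ``kernel/image chase,'' as stated, would at best prove irredundancy of one particular generating set, which does not bound the rank of the semigroup from below.

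The second gap is that your induction step for the upper bound is circular. Writing $\alpha=\beta\gamma$ with $\beta\in A$ and $\gamma$ defined on $Y$ by $y\gamma=x\alpha$ gives $|\im(\gamma)|=|\im(\alpha)|=s$, so the inductive hypothesis (which covers ranks greater than $s$) does not apply to $\gamma$. The paper avoids this with Lemma \ref{max2}, which decomposes any $\alpha$ of rank $k<r-1$ as a product of two elements of rank $k+1$ --- a strictly rank-increasing decomposition --- and with Lemma \ref{max1}, which handles rank $r-1$ by producing one factor in $A$ and one \emph{regular} factor of rank $r-1$. The remaining work (Lemmas \ref{dec} through \ref{lemb}) then compresses the regular rank-$(r-1)$ generators down to one per captive point, absorbing those attached to non-captive points into $\langle A\rangle$ via the gaps of $\{1,\dots,n\}\setminus Y$. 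Your instinct that non-captive points are handled by such gaps is right, but the decomposition driving the induction must increase the rank of both factors at each step for the argument to close.
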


The rest of this section (and paper) is dedicated to proving Theorem \ref{main}. 

In what follows, it will be convenient to fix two particular complete extensions in $\O_n$ of an order-preserving function between to subchains of $\{1<2<\cdots<n\}$.  
Take a partial (order-preserving) transformation 
$
\theta =
            \left(
              \begin{array}{ccc}
                a_{1} & \cdots & a_{k}\\
                b_{1} & \cdots & b_{k}\\
              \end{array}
            \right), 
$
with $1\le a_{1} < \cdots < a_{k}\le n$, 
$1\le b_{1} \le \cdots \le b_{k}\le n$ 
and $1 \le k \leq n$. 
We define the \textit{canonical complete extensions}
$\widehat{\theta}, \widetilde{\theta} \in \mathcal{O}_{n}$ of $\theta$ by
$$
x\widehat{\theta} =\left\{
            \begin{array}{ll}
                b_{1} &\mbox{if $1 \le x < a_{2}$}\\
                b_{j} &\mbox{if $a_{j} \le x < a_{j+1}$, for $2 \le j \le k-1$}\\
                b_{k} &\mbox{if $a_{k} \le x \le n$} 
            \end{array}\right. 
$$
and 
$$ 
x\widetilde{\theta} =\left\{
            \begin{array}{ll}
                b_{1} &\mbox{if $1 \le x \le a_{1}$}\\
                b_{j} &\mbox{if $a_{j-1} < x \le a_{j}$, for  $2 \le j \le k-1$}\\
                b_{k} &\mbox{if $a_{k-1} < x \le n$}~.
            \end{array}\right. 
$$
Observe that $\im(\widehat{\theta})=\im(\widetilde{\theta})=\im(\theta)$ and so, in particular, if $\im(\theta)\subseteq Y$ then $\widehat{\theta}, \widetilde{\theta} \in \mathcal{O}_{n}(Y)$. 

\smallskip 

For instance, let  
 $\theta=
 \left(\begin{array}{cccc}
 2&5&6&8\\
 1&3&5&7
 \end{array}\right). 
$
Then, in $\O_9$ we have 
$$
\widehat{\theta}=
 \left(\begin{array}{ccccccccc}
 1&\mathit2&3&4&\mathit5&\mathit6&7&\mathit8&9\\
 1&\mathit1&1&1&\mathit3&\mathit5&5&\mathit7&7
 \end{array}\right) 
\quad\textrm{and}\quad 
\widetilde{\theta}=
 \left(\begin{array}{ccccccccc}
1&\mathit2&3&4&\mathit5&\mathit6&7&\mathit8&9\\
1&\mathit1&3&3&\mathit3&\mathit5&7&\mathit7&7
 \end{array}\right). 
$$

\smallskip 

Let us consider $Y = \{y_{1} < \cdots < y_{r}\}$. 
The next two lemmas will provide us a set of generators of $\O_n(Y)$ containing only transformations of rank no less than $r-1$. 

\begin{lemma}\label{max1}
        Let $\alpha \in \mathcal{O}_n(Y)$ be such that $|\im(\alpha)| = r-1$.
        Then $\alpha = \beta\gamma$, for some $\beta, \gamma \in \mathcal{O}_n(Y)$ such that  $|\im(\beta)| = r$,
        $|\im(\gamma)| = r-1$ and $\gamma$ is regular.
\end{lemma}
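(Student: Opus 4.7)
Since $\im(\alpha)\subseteq Y$ and $|\im(\alpha)|=r-1$, the image of $\alpha$ omits a unique element $y_s$ of $Y$. Set $X_j=\alpha^{-1}(y_j)$ for $j\in\{1,\ldots,r\}\setminus\{s\}$; order-preservation of $\alpha$ forces these to be $r-1$ nonempty intervals covering $\{1,\ldots,n\}$. Since $n>r-1$, pigeonhole yields an index $j_0\ne s$ with $|X_{j_0}|\ge 2$. The whole construction will be built around such a $j_0$.

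The plan is to build $\gamma$ so that its restriction to $Y$ is the unique order-preserving surjection $Y\to Y\setminus\{y_s\}$ whose duplicated image-value is $y_{j_0}$. Concretely, if $j_0<s$ one defines the partial map $\theta_\gamma$ on $Y$ by fixing $y_i$ for $i\le j_0$ and for $i\ge s+1$, collapsing $y_{j_0+1}$ onto $y_{j_0}$, and shifting $y_i\mapsto y_{i-1}$ for $j_0+2\le i\le s$; the case $j_0>s$ is mirror-symmetric. Taking $\gamma=\widehat{\theta_\gamma}$ one obtains $\gamma\in\O_n(Y)$ with $\im(\gamma)=Y\gamma=Y\setminus\{y_s\}$, so $\gamma$ is regular of image-size $r-1$.

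To produce $\beta$: for each $j^*\in\{1,\ldots,r\}\setminus\{s,j_0\}$, the fibre $\gamma|_Y^{-1}(y_{j^*})$ is a singleton $\{y_{m(j^*)}\}$, and the requirement $\beta\gamma=\alpha$ forces $x\beta=y_{m(j^*)}$ for every $x\in X_{j^*}$. For the exceptional class, $\gamma|_Y^{-1}(y_{j_0})=\{y_a<y_b\}$ contains exactly two elements, and I assign $y_b$ to $\max X_{j_0}$ and $y_a$ to the remaining elements of $X_{j_0}$, which is possible precisely because $|X_{j_0}|\ge 2$. The forced values sweep out $Y\setminus\{y_a,y_b\}$, while the split on $X_{j_0}$ contributes both $y_a$ and $y_b$, so $\im(\beta)=Y$ and $|\im(\beta)|=r$.

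The final verifications --- that $\beta$ is order-preserving, lies in $\O_n(Y)$, and satisfies $\beta\gamma=\alpha$ class by class --- are straightforward substitutions. The main obstacle I anticipate is purely clerical: handling the mirror cases $j_0<s$ and $j_0>s$ (with their sub-cases where $j_0$ is or is not adjacent to $s$) and confirming that at each transition between consecutive $\alpha$-classes the value taken by $\beta$ is non-decreasing. In every configuration this reduces to an inequality of the form $y_i\le y_{i+1}$.
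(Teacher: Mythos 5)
Your proposal is correct and follows essentially the same route as the paper: locate a fibre of $\alpha$ with at least two elements, split it to obtain a rank-$r$ map $\beta$ with $\im(\beta)=Y$, and compose with a regular rank-$(r-1)$ element $\gamma$ arising as a canonical complete extension of an order-preserving map on $Y$ whose image is $Y\setminus\{y_s\}$. The only differences are cosmetic (you derive $\beta$ from the equation $\beta\gamma=\alpha$ after fixing $\gamma$, and you split off $\max X_{j_0}$ where the paper splits off the minimum), and the case analysis on the position of $j_0$ relative to $s$ goes through exactly as you anticipate.
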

\begin{proof}
Let  $j \in \{1, \ldots, r\}$ be such that $\im(\alpha) = Y \setminus \{y_{j}\}$. Then, put 
$$
\alpha =
        \left(
       \begin{array}{ccccccc}
                A_{1} & \cdots & A_{j-1} & A_{j} & A_{j+1} & \cdots  & A_{r-1}\\
                y_{1} & \cdots & y_{j-1}  & y_{j+1} & y_{j+2} & \cdots  & y_{r}\\
       \end{array}
     \right)
$$
and take  $k \in \{1, \ldots, r-1\}$ such that   $|A_{k}| \geq 2$. 
     If $j < k$, then define
     $$
     \beta =\left(
          \begin{array}{ccccccccccc}
               A_{1} & \cdots & A_{j} & A_{j+1} & \cdots & A_{k-1} & \min(A_k) & A_k\setminus\{\min(A_k)\} & A_{k+1} & \cdots & A_{r-1}\\
               y_{1} & \cdots & y_{j} & y_{j+1} & \cdots & y_{k-1} & y_{k} & y_{k+1} & y_{k+2}  & \cdots & y_{r}\\
            \end{array}
        \right)
        $$
     and
      $$
      \theta =\left(
          \begin{array}{cccccccccccc}
               y_{1} & \cdots & y_{j-1} & y_{j} & y_{j+1} & \cdots & y_{k-1} & y_{k} & y_{k+2} & \cdots & y_{r}\\
               y_{1} & \cdots & y_{j-1} & y_{j+1} & y_{j+2} & \cdots & y_{k} & y_{k+1} & y_{k+2}  & \cdots & y_{r}\\
            \end{array}
        \right)~.
      $$ 
    If $k< j$, then define
$$
    \beta =\left(
          \begin{array}{ccccccccccc}
               A_{1} & \cdots & A_{k-1} & \min(A_k) & A_k\setminus\{\min(A_k)\} & A_{k+1} & \cdots & A_{j-1} & A_{j} & \cdots & A_{r-1}\\
               y_{1} & \cdots & y_{k-1} & y_{k} & y_{k+1} & y_{k+2} & \cdots & y_{j} & y_{j+1} & \cdots & y_{r}\\
            \end{array}
        \right)
$$
     and
$$
      \theta =\left(
          \begin{array}{ccccccccccc}
               y_{1} & \cdots & y_{k-1} & y_{k} & y_{k+2} & \cdots & y_{j} & y_{j+1}  & \cdots & y_{r}\\
               y_{1} & \cdots & y_{k-1} & y_{k} & y_{k+1} & \cdots & y_{j-1} & y_{j+1} & \cdots & y_{r}\\
            \end{array}
        \right)~.
$$
Finally, if $j = k$, then define
$$\beta =
        \left(
          \begin{array}{cccccccc}
            A_{1} & \cdots & A_{j-1} & \min(A_j) & A_j\setminus\{\min(A_j)\}& A_{j+1} & \cdots & A_{r-1}\\
            y_{1} & \cdots & y_{j-1} & y_{j} & y_{j+1} & y_{j+2} & \cdots & y_{r}\\
          \end{array}
        \right)
$$ 
and
$$
\theta =\left(
          \begin{array}{ccccccccccc}
               y_{1} & \cdots & y_{j-1} & y_{j} & y_{j+2}  & \cdots & y_{r}\\
               y_{1} & \cdots & y_{j-1} & y_{j+1} & y_{j+2} & \cdots & y_{r}\\
            \end{array}
        \right)~.
$$
In all cases, we have $\beta, \widehat{\theta} \in \O_n(Y)$, with $|\im(\beta)| = r$ and $|\im(\widehat{\theta})| = r-1$.
Moreover, $\im(\widehat{\theta})=Y\widehat{\theta}$, whence $\widehat{\theta}$ is regular, and  it is a routine matter to verify that  
$\alpha = \beta\widehat{\theta}$, as required.
\end{proof}

\begin{lemma}\label{max2}
        Let $\alpha \in \O_n(Y)$ be such that $|\im(\alpha)| = k < r-1$.
        Then $\alpha = \beta\gamma$, for some $\beta, \gamma \in \O_n(Y)$ such that 
        $|\im(\beta)| = |\im(\gamma)|= k+1$.
\end{lemma}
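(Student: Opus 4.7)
Write $\alpha = \binom{A_1 \mid \cdots \mid A_k}{y_{i_1} \mid \cdots \mid y_{i_k}}$ with $i_1 < \cdots < i_k$ in $\{1,\ldots,r\}$. The hypothesis $k < r-1$ yields two slack inequalities: $n - k \geq 2$ (so some block $A_p$ has $|A_p| \geq 2$) and $r - k \geq 2$ (so $|Y \setminus \im(\alpha)| \geq 2$). The plan is to fix such a $p$, split $A_p$ into non-empty consecutive pieces $A_p^- \sqcup A_p^+$, and build $\beta$ on the refined partition $A_1 \mid \cdots \mid A_{p-1} \mid A_p^- \mid A_p^+ \mid A_{p+1} \mid \cdots \mid A_k$ together with a compatible $\gamma$.

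Any $(k+1)$-subset $\{z_1 < \cdots < z_{k+1}\}$ of $Y$ makes $\beta$ (sending the refined blocks to $z_1,\ldots,z_{k+1}$ in order) a valid element of $\O_n(Y)$ with $|\im(\beta)| = k+1$. The equation $\beta\gamma = \alpha$ then forces $\gamma$ on $\im(\beta)$ to take the weakly increasing sequence $y_{i_1},\ldots,y_{i_{p-1}}, y_{i_p}, y_{i_p}, y_{i_{p+1}},\ldots,y_{i_k}$, whose image is $\im(\alpha)$ of size $k$. Thus the task reduces to choosing the $z_j$'s and extending $\gamma$ to $\{1,\ldots,n\}$ so that it takes exactly one additional $Y$-value $y_u \notin \im(\alpha)$.

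I would argue by cases. If $i_1 > 1$, take $\{z_j\} \subseteq Y \setminus \{y_1\}$ and set $\gamma(y_1) = y_1$: this is order-compatible since $y_1 < z_1$ and $y_1 < y_{i_1} = \gamma(z_1)$, and it adds $y_1 \notin \im(\alpha)$ to $\im(\gamma)$. Dually, if $i_1 = 1$ but $i_k < r$, set $\gamma(y_r) = y_r$. If $i_1 = 1$ and $i_k = r$ (which forces $k \geq 2$ and confines the $r - k \geq 2$ missing indices to the interior), pick missing $y_s, y_u$ such that taking $\{z_1,\ldots,z_{k+1}\} = \im(\alpha) \cup \{y_s\}$ leaves $y_u$ as an integer strictly between some $z_t$ and $z_{t+1}$ whose $\gamma$-values are $y_{i_{t'}} < y_{i_{t'+1}}$ straddling $y_u$; then $\gamma(y_u) = y_u$ is order-compatible and contributes the extra value. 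In every case, the canonical extension to $\{1,\ldots,n\}$ of the partial $\gamma$, in the style of the $\widehat{\,\cdot\,}$ introduced above, lies in $\O_n(Y)$ and has image of size exactly $k+1$. The main obstacle is the third configuration: $(y_s, y_u)$ must be chosen so that $y_u$ falls inside an integer gap of $\im(\beta)$ straddling an interior gap of $\{i_1,\ldots,i_k\}$; this is always possible because the slack $r - k \geq 2$ distributes two missing elements either inside a single interior gap or across two distinct ones, in both cases enabling the pairing.
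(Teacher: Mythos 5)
Your overall strategy is viable and is organized differently from the paper's: the paper fixes two spare elements $u<v$ of $Y\setminus\im(\alpha)$ at the outset (sitting in positions $\ell$ and $m$ of the image sequence) and then splits into three cases according to whether the big block $A_j$ satisfies $j\le\ell$, $\ell<j\le m$ or $j>m$, in each case inserting the spare element ``nearest'' to $j$ into $\im(\beta)$ and making the other one a fixed point of $\gamma$. Your boundary cases ($i_1>1$, or $i_1=1$ and $i_k<r$) are correct and complete: there $Y\setminus\{y_1\}$ (resp. $Y\setminus\{y_r\}$) still has $r-1\ge k+1$ elements, and appending the fixed point $y_1$ (resp. $y_r$) to the forced partial map is order-compatible.

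The third case, however, contains a genuine gap. The existence of a suitable pair $(y_s,y_u)$ is asserted, and the reason offered --- that the two missing elements lie either in one interior gap or in two --- does not by itself establish it. The difficulty is that the forced $\gamma$-values on $z_1<\cdots<z_{k+1}$ are $y_{i_1},\ldots,y_{i_p},y_{i_p},\ldots,y_{i_k}$, so the consecutive pair of $\gamma$-values carried by the gap $(z_t,z_{t+1})$ depends on whether $t<p$, $t=p$ or $t>p$; consequently, whether $y_u$ lands in the gap whose $\gamma$-values straddle it depends on the position of the split block $p$ relative to the interior gap of $\{i_1,\ldots,i_k\}$ containing $u$. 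For instance, if both missing elements lie in the same gap $(i_{t'},i_{t'+1})$, then choosing $y_s$ to be the larger of the two works only when $p\ge t'+1$, while choosing $y_s$ to be the smaller works only when $p\le t'$; a similar role-swap governed by $p$ is needed when the two missing elements lie in distinct gaps $(i_{t_1},i_{t_1+1})$ and $(i_{t_2},i_{t_2+1})$ with $t_1<t_2$ (one assignment requires $p\ge t_1+1$, the other $p\le t_2$, and since $t_1+1\le t_2$ one of these always holds). So the claim you need is in fact true, but proving it requires exactly this coordination between $p$ and the roles of the two spare elements --- which is what the paper's three-case analysis supplies and what your argument omits. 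As written, the crux of the hardest case is unproven.
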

\begin{proof} Take 
        $\alpha =
        \left(
       \begin{array}{ccccccc}
                A_{1} & A_{2} & \cdots  & A_{k}\\
                a_{1} & a_{2} & \cdots  & a_{k}\\
       \end{array}
     \right),$
     where $\{a_{1} < \cdots < a_{k}\} \subseteq Y$.
Since $k \leq r-2$, then there exist $u, v \in Y \setminus \{a_{1}, \ldots, a_{k}\}$ such that 
$a_1<\cdots<a_\ell  < u  < a_{\ell+1}<\cdots<a_{m}  < v < a_{m+1}<\cdots<a_k$, for some $0\le\ell\le m\le k$. 
On the other hand, since $k<n$, then there exists $j \in \{1, \ldots, k\}$ such that $|A_{j}| \geq 2$. 
Next, we consider three cases. 

\smallskip 

\noindent{\sc case 1.} First, suppose that $1\le j\le\ell$. 
Define a full transformation $\beta$ by 
$$
\beta = \left\{ \begin{array}{l}
\left(
\begin{array}{cccccccc}
A_1 &\cdots & A_{\ell-1} & \min(A_\ell) & A_j\setminus\{\min(A_\ell)\} & A_{\ell+1} & \cdots & A_{k}\\
a_1& \cdots & a_{\ell-1}& a_{\ell} & u & a_{\ell+1} &\cdots & a_{k}
            \end{array}
\right)  \\ \qquad\mbox{if $j=\ell$}
\\ \\
\left(
    \begin{array}{cccccccccccc}
A_1 &\cdots & A_{j-1} & \min(A_j) & A_j\setminus\{\min(A_j)\} & A_{j+1} & \cdots & A_{\ell-1} & A_{\ell} & A_{\ell+1} & \cdots & A_{k}\\
a_1& \cdots & a_{j-1}& a_{j} & a_{j+1} & a_{j+2} & \cdots & a_{\ell} & u & a_{\ell+1} & \cdots & a_{k}
            \end{array}
\right) \\ \qquad\mbox{otherwise} 
\end{array} \right.
$$
and a partial transformation $\theta$ by 
$$
\theta = \left\{ \begin{array}{l}
\left(
\begin{array}{ccccccc}
a_1 & \cdots & a_{m}   & v & a_{m+1} & \cdots & a_k\\
a_1& \cdots  & a_{m}   & v & a_{m+1} & \cdots & a_k\   
         \end{array}
\right)  \\ \qquad \mbox{if $j=\ell$}
\\ \\
\left(
    \begin{array}{cccccccccccccc}
a_1 &\cdots & a_j & a_{j+2} & \cdots & a_{\ell}    & u            & a_{\ell+1} & \cdots & a_{m}   & v & a_{m+1} & \cdots & a_k\\
a_1& \cdots & a_j & a_{j+1} & \cdots & a_{\ell-1} & a_{\ell} & a_{\ell+1} & \cdots & a_{m}   & v & a_{m+1} & \cdots & a_k             
\end{array}
\right) \\ \qquad \mbox{otherwise.} 
\end{array} \right.
$$

\noindent{\sc case 2.} Next, suppose that $\ell+1\le j\le m$. Now, define a full transformation $\beta$ by 
$$
\beta = \left\{ \begin{array}{l}
\left(
\begin{array}{cccccccc}
A_1 &\cdots & A_{\ell} & \min(A_{\ell+1}) & A_{\ell+1}\setminus\{\min(A_{\ell+1})\} & A_{\ell+2} & \cdots & A_{k}\\
a_1& \cdots & a_{\ell}& u & a_{\ell+1} & a_{\ell+2} &\cdots & a_{k}
            \end{array}
\right)  \\ \qquad \mbox{if $j=\ell+1$}
\\ \\
\left(
    \begin{array}{cccccccccccc}
A_1 &\cdots & A_{\ell}  & A_{\ell+1}  &  A_{\ell+2} & \cdots & A_{j-1}   & \min(A_j) & A_j\setminus\{\min(A_j)\} & A_{j+1} & \cdots & A_{k}\\
a_1& \cdots & a_{\ell}  & u  & a_{\ell+1}  & \cdots & a_{j-2}   & a_{j-1} & a_{j} & a_{j+1} & \cdots & a_{k}
            \end{array}
\right) \\ \qquad \mbox{otherwise} 
\end{array} \right.
$$
and a partial transformation $\theta$ by 
$$
\theta = \left\{ \begin{array}{l}
\left(
\begin{array}{ccccccccccc}
a_1 & \cdots & a_\ell &  u & a_{\ell+2} & \cdots &  a_{m}   & v & a_{m+1} & \cdots & a_k\\
a_1& \cdots  & a_\ell &  a_{\ell+1} & a_{\ell+2} & \cdots &  a_{m}   & v & a_{m+1} & \cdots & a_k\   
         \end{array}
\right)  \\ \qquad \mbox{if $j=\ell+1$}
\\ \\
\left(
    \begin{array}{cccccccccccccc}
a_1 &\cdots &  a_\ell & u             & a_{\ell+1} & \cdots &  a_{j-1}  & a_{j+1} & \cdots & a_{m}   & v & a_{m+1} & \cdots & a_k\\
a_1 & \cdots & a_\ell & a_{\ell+1} & a_{\ell+2} & \cdots &  a_{j}     & a_{j+1} & \cdots & a_{m}   & v & a_{m+1} & \cdots & a_k             
\end{array}
\right) \\ \qquad \mbox{otherwise.} 
\end{array} \right.
$$
       
\noindent{\sc case 3.} Finally, suppose that $m+1\le j\le k$. In this last case, we define a full transformation $\beta$ by 
$$
\beta=\left\{ \begin{array}{l}
\left(
          \begin{array}{cccccccccccc}
 A_1 & \cdots & A_m &  \min(A_{m+1}) & A_{m+1}\setminus\{\min(A_{m+1})\} & A_{m+2} & \cdots &  A_{k}\\
 a_1 & \cdots & a_m & v & a_{m+1} & a_{m+2} & \cdots &  a_{k}\\
            \end{array}
\right) \\ \qquad \mbox{if $j=m+1$}
\\ \\
\left(
          \begin{array}{cccccccccccc}
 A_1 & \cdots & A_m & A_{m+1} & A_{m+2} & \cdots & A_{j-1} & \min(A_j) & A_j\setminus\{\min(A_j)\} & A_{j+1} & \cdots &  A_{k}\\
 a_1 & \cdots & a_m & v & a_{m+1} & \cdots & a_{j-2} & a_{j-1} & a_{j} & a_{j+1} & \cdots &  a_{k}\\
            \end{array}
\right) \\ \qquad \mbox{otherwise} 
\end{array} \right.
$$
and a partial transformation $\theta$ by 
$$
\theta=\left\{ \begin{array}{l}
\left(
          \begin{array}{ccccccccccc}
a_1 & \cdots & a_\ell & u & a_{\ell+1} & \cdots & a_m& v & a_{m+2} & \cdots &  a_{k}\\
a_1 & \cdots & a_\ell & u & a_{\ell+1} & \cdots & a_m& a_{m+1} & a_{m+2} & \cdots &  a_{k}\\
            \end{array}
\right) \\ \qquad \mbox{if $j=m+1$}
\\ \\
\left(
          \begin{array}{ccccccccccccccc}
 a_1 & \cdots & a_\ell & u & a_{\ell+1} & \cdots & a_m & v & a_{m+1} & \cdots & a_{j-1}  & a_{j+1} & \cdots &  a_{k}\\
 a_1 & \cdots & a_\ell & u & a_{\ell+1} & \cdots & a_m &  a_{m+1} & a_{m+2} & \cdots & a_{j}  & a_{j+1} & \cdots &  a_{k}\\
            \end{array}
\right) \\ \qquad \mbox{otherwise.} 
\end{array} \right.
$$
       
In each case, it is a routine matter to verify that $\beta,\widehat{\theta}\in\O_n(Y)$,    $|\im(\beta)| = |\im(\widehat{\theta})|= k+1$ and 
$\alpha=\beta\widehat{\theta}$, as required.        
\end{proof}

Now, let 
$$
A = \{\alpha \in \O_n(Y) \mid |\im(\alpha)| = r\}=\{\alpha \in \O_n(Y) \mid \im(\alpha)=Y\}
$$ 
and 
$$
B = \{\alpha \in \O_n(Y) \mid \mbox{$\alpha$ is regular and $|\im(\alpha)| = r-1$}\}.
$$ 

By Lemma \ref{max2} and a simple induction process, we may conclude that each element of $\O_n(Y)$ with rank less than or equal to $r-1$ is a product of elements of rank $r-1$, which in turn, by Lemma \ref{max1}, are products of elements of $A\cup B$. 
On the other hand, take $\alpha\in A$ and $\beta,\gamma\in\O_n(Y)$ such that $\alpha=\beta\gamma$. Then $\ker(\beta)\subseteq\ker(\alpha)$ and, since the ranks of $\beta$ and $\gamma$ cannot be smaller than the rank of $\alpha$, 
we must also have $\beta,\gamma\in A$. It follows that $\alpha$ and $\beta$ have the same image and kernel, whence $\alpha=\beta$ 
(since $\O_n$ is $\mathscr{H}$-trivial).  Thus, we immediately have: 

\begin{proposition}\label{ger1}
The semigroup $\O_n(Y)$ is generated by $A\cup B$. Moreover, any generating set of $\O_n(Y)$ must contain $A$. 
\end{proposition}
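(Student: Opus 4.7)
The plan is to combine the preparatory Lemmas \ref{max1} and \ref{max2} with the $\mathscr{H}$-triviality of $\O_n$, in the spirit of the sketch offered just before the statement.

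For the generation claim I would argue by downward induction on $k=|\im(\alpha)|$, or equivalently by upward induction on $r-k$. The base cases are immediate: when $k=r$ one has $\alpha\in A$; when $k=r-1$, Lemma \ref{max1} supplies a factorization $\alpha=\beta\gamma$ with $\beta\in A$ and $\gamma\in B$. For $k<r-1$, Lemma \ref{max2} yields a factorization $\alpha=\beta\gamma$ with $|\im(\beta)|=|\im(\gamma)|=k+1$, and the inductive hypothesis applied to each factor expresses $\beta$ and $\gamma$ as products of elements of $A\cup B$, whence so does $\alpha$.

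For the necessity of $A$, I would first establish the following two-factor statement: if $\alpha\in A$ and $\alpha=\beta\gamma$ with $\beta,\gamma\in\O_n(Y)$, then $\alpha=\beta$. Since $|\im(\beta\gamma)|\le\min\{|\im(\beta)|,|\im(\gamma)|\}$ and $|\im(\alpha)|=r$, both $\beta$ and $\gamma$ must lie in $A$. Then
$$
\gamma(Y)=\gamma(\im(\beta))=\im(\beta\gamma)=\im(\alpha)=Y,
$$
so the order-preserving self-map $\gamma|_Y\colon Y\longrightarrow Y$ of the finite chain $Y$ is a surjection, hence a bijection, and therefore $\gamma|_Y=\id_Y$. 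In particular $\gamma$ is injective on $\im(\beta)=Y$, which promotes the always-valid inclusion $\ker(\beta)\subseteq\ker(\beta\gamma)$ into the equality $\ker(\alpha)=\ker(\beta)$; combined with $\im(\alpha)=\im(\beta)=Y$ and the $\mathscr{H}$-triviality of $\O_n$ (which restricts to $\O_n(Y)$), this forces $\alpha=\beta$. Applied to an arbitrary expression $\alpha=g_1\cdots g_k$ over a generating set $G$ (taking $\beta=g_1$ and $\gamma=g_2\cdots g_k$ when $k\ge2$, and reading off $\alpha=g_1$ directly when $k=1$), this yields $\alpha=g_1\in G$, so $A\subseteq G$.

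No essential obstacle is expected: the two lemmas do all the combinatorial work for the first half, and the only delicate point in the second half is the observation that $\gamma|_Y$ must be bijective, which is precisely what upgrades the general kernel inclusion to the equality required to invoke $\mathscr{H}$-triviality.
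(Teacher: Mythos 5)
Your proposal is correct and follows essentially the same route as the paper: generation via downward induction using Lemmas \ref{max1} and \ref{max2}, and necessity of $A$ via the rank argument forcing $\beta,\gamma\in A$ followed by $\mathscr{H}$-triviality of $\O_n$. The only (immaterial) difference is that you upgrade $\ker(\beta)\subseteq\ker(\alpha)$ to equality by observing $\gamma|_Y=\id_Y$, whereas the paper simply notes that an inclusion of kernels of two rank-$r$ maps is automatically an equality.
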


\smallskip 

Observe that, since the elements of $A$ have all the same image and $\O_n$ is $\mathscr{H}$-trivial, then 
$A$ has as much elements as the number of distinct kernels, i.e.  $|A|=\binom{n-1}{r-1}$, 
the number of \textit{convex} equivalences of weight $r$ on $\{1,\ldots,n\}$ (see \cite{Gomes&Howie:1992}). 

\medskip 

For each $i \in \{1, \ldots, r\}$, define  
$$
B_{i} = \{\alpha \in B \mid \im(\alpha) = Y\setminus\{y_{i}\}\}=\{\alpha \in B \mid Y\alpha = Y\setminus\{y_{i}\}\}.
$$
and 
$$
\varepsilon_{i} =
            \left(
              \begin{array}{cccccc}
                y_{1} & \cdots & y_{i-1} & y_{i+1} & \cdots & y_{r}\\
                y_{1} & \cdots & y_{i-1} & y_{i+1} & \cdots & y_{r}\\
              \end{array}
            \right).
$$
Clearly, $\widehat{\varepsilon}_{i}, \widetilde{\varepsilon}_{i} \in B_{i}$, for $1\le i\le r$, and 
$B = B_{1}\,\dot{\cup}\,B_{2}\,\dot{\cup}\,\cdots\,\dot{\cup}\,B_{r}$.  
Moreover, we have the following useful decompositions: 

\begin{lemma}\label{dec}
        Let $i\in \{1,\ldots,r\}$ and let $\alpha \in B_{i}$. The following statements hold: 
        \begin{enumerate}
        \item If $i \le r-1$ then there exists 
        $\beta \in B_{i+1}$ such that $\alpha = \beta\widetilde{\varepsilon}_{i}$; 
        \item If $i \ge 2$ then there exists $\beta \in B_{i-1}$ such that $\alpha = \beta\widehat{\varepsilon}_{i}$.
        \end{enumerate}
\end{lemma}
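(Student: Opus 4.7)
The plan is to build $\beta$ explicitly from $\alpha$ by leaving the kernel partition of $\alpha$ untouched and only relabeling the image values at one coordinate, then exploit the very specific action of $\widetilde{\varepsilon}_i$ and $\widehat{\varepsilon}_i$ on the points of $Y$. The starting point is to write $\alpha\in B_i$ in the canonical form
$$
\alpha=\left(\begin{array}{ccccccc} A_1 & \cdots & A_{i-1} & A_i & A_{i+1} & \cdots & A_{r-1}\\ y_1 & \cdots & y_{i-1} & y_{i+1} & y_{i+2} & \cdots & y_r\end{array}\right),
$$
and to record the key consequence of regularity ($Y\alpha=\im(\alpha)$): each block $A_k$ must meet $Y$. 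This is the only property of $\alpha$ that needs to transfer to $\beta$ in order to ensure regularity of $\beta$.

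Next, I would pin down the action of the two canonical extensions on $Y$ itself. From the piecewise definitions (with the $a$-sequence of $\varepsilon_i$ equal to $y_1,\ldots,y_{i-1},y_{i+1},\ldots,y_r$), one reads off that $\widetilde{\varepsilon}_i$ fixes every $y_j$ with $j\neq i$ and sends $y_i\mapsto y_{i+1}$ (since $y_{i-1}<y_i\le y_{i+1}$ in the relevant interval), while $\widehat{\varepsilon}_i$ also fixes every $y_j$ with $j\neq i$ but sends $y_i\mapsto y_{i-1}$ (since $y_{i-1}\le y_i<y_{i+1}$). This is the observation that makes the whole argument work.

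For part $1$, I would set
$$
\beta=\left(\begin{array}{ccccccc} A_1 & \cdots & A_{i-1} & A_i & A_{i+1} & \cdots & A_{r-1}\\ y_1 & \cdots & y_{i-1} & y_i & y_{i+2} & \cdots & y_r\end{array}\right),
$$
and verify (i) $\beta$ is order-preserving because the bottom row is still strictly increasing, (ii) $\im(\beta)=Y\setminus\{y_{i+1}\}$ by inspection, (iii) $Y\beta=\im(\beta)$ because each $A_k$ contains a point of $Y$ (inherited from $\alpha$), so $\beta\in B_{i+1}$, and (iv) $\beta\widetilde{\varepsilon}_i=\alpha$: on $A_k$ with $k\neq i$ the image $y_k$ (or $y_{k+1}$) is fixed by $\widetilde{\varepsilon}_i$, and on $A_i$ the value $y_i$ is pushed to $y_{i+1}$, matching $\alpha$. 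Part $2$ is dual: take
$$
\beta=\left(\begin{array}{cccccccc} A_1 & \cdots & A_{i-2} & A_{i-1} & A_i & A_{i+1} & \cdots & A_{r-1}\\ y_1 & \cdots & y_{i-2} & y_i & y_{i+1} & y_{i+2} & \cdots & y_r\end{array}\right),
$$
so that $\im(\beta)=Y\setminus\{y_{i-1}\}$ and, using $y_i\widehat{\varepsilon}_i=y_{i-1}$ together with the fact that $\widehat{\varepsilon}_i$ fixes the remaining $y_j$, one gets $\beta\widehat{\varepsilon}_i=\alpha$.

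There is no real obstacle here beyond bookkeeping: the substance of the lemma is entirely contained in the two identities $y_i\widetilde{\varepsilon}_i=y_{i+1}$ and $y_i\widehat{\varepsilon}_i=y_{i-1}$. The only thing that requires attention is boundary indexing, namely checking that the constructions are valid also when $i=r-1$ in part $1$ or $i=2$ in part $2$ (where one of the two ``tails'' of the displayed matrix is empty); in both edge cases the same formulas go through verbatim.
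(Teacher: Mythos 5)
Your proposal is correct and takes essentially the same route as the paper: both construct $\beta$ by keeping the kernel classes $A_1,\ldots,A_{r-1}$ of $\alpha$ and shifting the image labels at the single relevant position, then use that $\widetilde{\varepsilon}_i$ (resp.\ $\widehat{\varepsilon}_i$) fixes $y_j$ for $j\neq i$ and sends $y_i$ to $y_{i+1}$ (resp.\ $y_{i-1}$). Your write-up in fact makes explicit the two action identities and the regularity check ($A_k\cap Y\neq\emptyset$ for all $k$) that the paper leaves implicit.
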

\begin{proof}
     Since $\im(\alpha) = Y \setminus \{y_{i}\} = Y\alpha$, then we can write
$\alpha =\left(
              \begin{array}{cccccc}
                A_{1} & \cdots & A_{i-1} & A_{i}   & \cdots & A_{r-1}\\
               y_{1} & \cdots & y_{i-1} & y_{i+1}  & \cdots & y_{r} \\
              \end{array}
            \right),$
      where $A_{i} \cap Y \neq \emptyset$, for all $i \in\{ 1, \ldots, r-1\}$.
If $i\le r-1$ then, being 
      $$\beta =
            \left(
              \begin{array}{ccccccc}
                A_{1} & \cdots & A_{i-1} & A_{i} & A_{i+1} & \cdots & A_{r-1}\\
               y_{1} & \cdots & y_{i-1} & y_{i} & y_{i+2} & \cdots & y_{r} \\
              \end{array}
            \right)\in B_{i+1},$$ 
      we have $\alpha = \beta\widetilde{\varepsilon}_{i}$. 
On the other hand, if $i\ge2$ then, being 
      $$\beta =
            \left(
              \begin{array}{ccccccc}
                A_{1} & \cdots & A_{i-2} & A_{i-1} & A_{i} & \cdots & A_{r-1}\\
               y_{1} & \cdots & y_{i-2} & y_{i} & y_{i+1} & \cdots & y_{r} \\
              \end{array}
            \right)\in B_{i-1},$$ 
      we have $\alpha = \beta\widehat{\varepsilon}_{i}$, as required.
\end{proof}

Now, let $i\in\{1,\ldots,r\}$ and take $\alpha\in B_k$, for some $k\in\{1,\ldots,r\}\setminus\{i\}$. Then, by the previous lemma, we have 
$$
\alpha=\left\{
\begin{array}{ll}
\beta \widetilde{\varepsilon}_{i-1}\cdots \widetilde{\varepsilon}_{k} & 
\mbox{if $k<i$}\\
\beta \widehat{\varepsilon}_{i+1}\cdots \widehat{\varepsilon}_{k} & 
\mbox{if $k>i$},\\ 
\end{array}
\right.
$$
for some $\beta\in B_i$. Hence, as a consequence of Proposition \ref{ger1} and Lemma \ref{dec}, we immediately obtain: 

\begin{corollary}\label{ger2}
For any $i\in\{1,\ldots,r\}$, one has $\O_n(Y)=\langle A, \widetilde{\varepsilon}_{1}, 
\ldots, \widetilde{\varepsilon}_{i-1},B_i, 
\widehat{\varepsilon}_{i+1},\ldots, \widehat{\varepsilon}_{r}\rangle$. 
\end{corollary}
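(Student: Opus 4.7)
The plan is to reduce the statement to an iterated application of Lemma \ref{dec}. By Proposition \ref{ger1}, one has $\O_n(Y)=\langle A\cup B\rangle$, and since $B=B_1\,\dot\cup\,\cdots\,\dot\cup\,B_r$, it suffices, for each fixed $i\in\{1,\ldots,r\}$, to express every element of $B_k$ with $k\ne i$ as a product of elements of $B_i$ and of the idempotents $\widetilde{\varepsilon}_{1},\ldots,\widetilde{\varepsilon}_{i-1},\widehat{\varepsilon}_{i+1},\ldots,\widehat{\varepsilon}_{r}$. The proof is therefore split naturally according to whether $k<i$ or $k>i$.

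For the case $k<i$, I would start with $\alpha\in B_k$ and apply statement 1 of Lemma \ref{dec} to obtain $\beta_1\in B_{k+1}$ with $\alpha=\beta_1\widetilde{\varepsilon}_k$; then apply the same statement to $\beta_1$ to obtain $\beta_2\in B_{k+2}$ with $\beta_1=\beta_2\widetilde{\varepsilon}_{k+1}$; and so on. After $i-k$ iterations, this produces $\beta_{i-k}\in B_i$ such that
$$
\alpha=\beta_{i-k}\,\widetilde{\varepsilon}_{i-1}\widetilde{\varepsilon}_{i-2}\cdots\widetilde{\varepsilon}_{k},
$$
which lies in the proposed generating set. Dually, for $k>i$, iterating statement 2 of Lemma \ref{dec} yields $\beta_{k-i}\in B_i$ such that
$$
\alpha=\beta_{k-i}\,\widehat{\varepsilon}_{i+1}\widehat{\varepsilon}_{i+2}\cdots\widehat{\varepsilon}_{k},
$$
again lying in the proposed generating set. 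A formal induction on $|k-i|$ packages both iterations cleanly.

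Combining both cases with the fact that $A$ and $B_i$ are themselves in the generating set, every element of $A\cup B$ belongs to $\langle A, \widetilde{\varepsilon}_{1},\ldots,\widetilde{\varepsilon}_{i-1},B_i,\widehat{\varepsilon}_{i+1},\ldots,\widehat{\varepsilon}_{r}\rangle$, and invoking Proposition \ref{ger1} one last time closes the argument. I do not foresee any real obstacle: the only mild subtlety is to keep the composition order straight, noting that the $\widetilde{\varepsilon}$'s are listed with indices decreasing from $i-1$ down to $k$ while the $\widehat{\varepsilon}$'s are listed with indices increasing from $i+1$ up to $k$, which reflects the right-multiplication rule given by Lemma \ref{dec}.
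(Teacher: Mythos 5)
Your proposal is correct and follows essentially the same route as the paper: the text preceding Corollary \ref{ger2} performs exactly this iterated application of Lemma \ref{dec}, writing $\alpha=\beta\,\widetilde{\varepsilon}_{i-1}\cdots\widetilde{\varepsilon}_{k}$ for $k<i$ and $\alpha=\beta\,\widehat{\varepsilon}_{i+1}\cdots\widehat{\varepsilon}_{k}$ for $k>i$ with $\beta\in B_i$, and then invokes Proposition \ref{ger1}. You also get the composition order of the idempotents right, so there is nothing to add.
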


\medskip 

Next, let $i=\min\{k\in\{1,\dots,n\}\mid k\not\in Y\}$. 
Clearly, we must have $1\le i\le r+1$. Moreover: 
\begin{enumerate}
\item If $i=1$ then $y_1>1$; 
\item If $i=2$ then $y_1=1\in Y^\sharp$ and $2<y_2\not\in Y^\sharp$; 
\item If $3\le i \le r$ then we have $y_1=1,\ldots,y_{i-1}=i-1$ and $y_i>i$, with $y_1,\ldots,y_{i-2}\in Y^\sharp$ and $y_{i-1},y_i\not\in Y^\sharp$; and  
\item If $i=r+1$ then $Y=\{1,\ldots,r\}$ and so $Y^\sharp=\{1,\ldots,r-1\}$. 
\end{enumerate} 

For the case $i=1$, we have: 

\begin{lemma}\label{imin}
If $y_1>1$ then $B_1\subseteq \langle A\rangle$. Moreover, in this case, $\O_n(Y)=\langle A, 
\widehat{\varepsilon}_{2},\ldots, \widehat{\varepsilon}_{r}\rangle$. 
\end{lemma}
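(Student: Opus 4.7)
The plan is to reduce everything to one factorization: I will show that every $\alpha \in B_1$ can be written as $\alpha = \beta_1 \beta_2$ with $\beta_1, \beta_2 \in A$. Once this is established, $B_1 \subseteq \langle A \rangle$, and the "moreover" part is immediate from Corollary \ref{ger2} applied with $i = 1$, which gives $\O_n(Y) = \langle A, B_1, \widehat{\varepsilon}_2, \ldots, \widehat{\varepsilon}_r \rangle$.

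Take $\alpha \in B_1$ and write
$
\alpha = \left(\begin{array}{cccc} A_1 & A_2 & \cdots & A_{r-1} \\ y_2 & y_3 & \cdots & y_r \end{array}\right),
$
with each $A_i$ a subinterval of $\{1,\ldots,n\}$ meeting $Y$ (by regularity of $\alpha$). The crucial structural observation is that, because $\alpha$ is order-preserving with $\min\im(\alpha) = y_2$, the block $A_1$ contains $1$; and because $A_1 \cap Y \neq \emptyset$ while $\min Y = y_1 \ge 2$, the block $A_1$ also contains some $y_j \ge y_1$. Writing $A_1 = \{1,2,\ldots,m\}$, we therefore have $m \ge y_1 \ge 2$, so that $\{1,\ldots,y_1-1\}$ and $\{y_1,\ldots,m\}$ are both non-empty sub-intervals of $A_1$. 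This "slack" provided by the hypothesis $y_1 > 1$ is what makes the whole construction go through.

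I would then write down two explicit order-preserving transformations of $\{1,\ldots,n\}$:
$$
\beta_2 = \left(\begin{array}{cccccc} \{1,\ldots,y_1-1\} & \{y_1,\ldots,y_2\} & \{y_2+1,\ldots,y_3\} & \cdots & \{y_{r-1}+1,\ldots,n\} \\ y_1 & y_2 & y_3 & \cdots & y_r \end{array}\right)
$$
and
$$
\beta_1 = \left(\begin{array}{ccccc} \{1,\ldots,y_1-1\} & \{y_1,\ldots,m\} & A_2 & \cdots & A_{r-1} \\ y_1 & y_2 & y_3 & \cdots & y_r \end{array}\right).
$$
Both have image equal to $Y$, hence belong to $A$. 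A direct verification (only a few cases to check) shows $\alpha = \beta_1\beta_2$: on the first sub-block of $A_1$, $\beta_1$ hits $y_1$ and $\beta_2$ sends $y_1$ to $y_2$; on the second sub-block of $A_1$, $\beta_1$ hits $y_2$, fixed by $\beta_2$ up to mapping it to $y_2$; and on $A_j$ with $j \ge 2$, $\beta_1$ hits $y_{j+1}$, which $\beta_2$ fixes. Hence $\alpha \in \langle A \rangle$, as required.

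The only subtle point (and it is not really an obstacle, just the key idea) is choosing the split point of $A_1$ at $y_1$ so that $\beta_1$ can inject $y_1$ into its image without disturbing the kernel partition $A_1,\ldots,A_{r-1}$ of $\alpha$. Everything else is a bookkeeping check against the definitions, and no separate argument is needed for the final assertion.
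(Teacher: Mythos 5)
Your proof is correct and takes essentially the same approach as the paper's: factor each $\alpha\in B_1$ as a product of two elements of $A$ by splitting the initial kernel class of $\alpha$ (which contains both $1$ and $y_1$, by regularity together with $y_1>1$) so as to make room for $y_1$ in the image, and then invoke Corollary \ref{ger2} with $i=1$. The only cosmetic difference is where the first block is cut (the paper splits off the singleton $\{1\}$ and uses the canonical extension $\widehat{\theta}$ of the partial map sending $1\mapsto y_1$, $y_1\mapsto y_2$ and fixing $y_3,\ldots,y_r$ as the second factor, whereas you cut at $y_1$); both verifications go through.
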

\begin{proof} 
Take $\alpha =\left(
              \begin{array}{ccc}
                A_{2}  & \cdots & A_{r}\\
               y_{2}  & \cdots & y_{r} \\
              \end{array}
            \right)\in B_1$.
Since $y_1>1$, we must have $1,y_1\in A_2$. 
Define $$\beta =\left(
              \begin{array}{ccccc}
               1 & A_2\setminus\{1\} & A_{3}  & \cdots & A_{r}\\
              y_1 & y_{2}  & y_3 & \cdots & y_{r} \\
              \end{array}
            \right)
\quad\text{and}\quad  
\theta =\left(
              \begin{array}{ccccc}
               1 & y_1 & y_{3}  & \cdots & y_{r}\\
              y_1 & y_{2}  & y_3 & \cdots & y_{r} \\
              \end{array}
            \right).  
$$
Then, clearly $\beta,\widehat{\theta}\in A$ and $\alpha=\beta\widehat{\theta}$.
Hence $B_1\subseteq \langle A\rangle$ and thus, by Corollary \ref{ger2},  $\O_n(Y)=\langle A, \widehat{\varepsilon}_{2}, \ldots, \widehat{\varepsilon}_{r}\rangle$, as required. 
\end{proof}

Observe that $y_1\in Y^\sharp$ if and only if $y_1=1$ (if and only if $1\in Y$). 

Similarly to the previous lemma, we may prove:  

\begin{lemma}\label{imax}
If $y_r<n$ then $B_r\subseteq \langle A\rangle$. Moreover, in this case, $\O_n(Y)=\langle A, \widetilde{\varepsilon}_{1}, 
\ldots, \widetilde{\varepsilon}_{r-1}\rangle$. 
\end{lemma}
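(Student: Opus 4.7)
My plan is to mirror the argument used in Lemma \ref{imin}, exchanging the roles of $1$ and $y_1$ with those of $n$ and $y_r$. The first step is to observe that, for any $\alpha \in B_r$, both $n$ and $y_r$ necessarily belong to the same block of the kernel of $\alpha$. Indeed, since $\im(\alpha) = Y \setminus \{y_r\} = \{y_1, \ldots, y_{r-1}\}$ and $\alpha$ is order-preserving, the maximum $n$ of the domain must map to the maximum $y_{r-1}$ of the image; likewise $y_r$, being the maximum of $Y$, also maps to $y_{r-1}$. The hypothesis $y_r < n$ then ensures that $y_r$ and $n$ are two distinct elements sitting in the block $A_{r-1} = y_{r-1}\alpha^{-1}$.

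With these two distinct elements pinned down, the construction runs in parallel to Lemma \ref{imin}: writing $\alpha = \begin{pmatrix} A_1 & \cdots & A_{r-1} \\ y_1 & \cdots & y_{r-1} \end{pmatrix}$, I would define
$$
\beta = \begin{pmatrix} A_1 & \cdots & A_{r-2} & A_{r-1} \setminus \{n\} & n \\ y_1 & \cdots & y_{r-2} & y_{r-1} & y_r \end{pmatrix}
\quad \text{and} \quad
\theta = \begin{pmatrix} y_1 & \cdots & y_{r-1} & n \\ y_1 & \cdots & y_{r-1} & y_r \end{pmatrix}.
$$
Both $\beta$ and $\widehat{\theta}$ have image exactly $Y$, so they lie in $A$. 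A routine block-by-block check then shows $\alpha = \beta \widehat{\theta}$: the blocks $A_1, \ldots, A_{r-2}$ are transported identically; elements of $A_{r-1} \setminus \{n\}$ go to $y_{r-1}$ and stay there; and $n$ goes via $y_r$, which $\widehat{\theta}$ collapses back to $y_{r-1}$, because $y_r$ falls in the interval $[y_{r-1}, n)$. This yields $B_r \subseteq \langle A \rangle$.

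For the ``moreover'' part I would appeal to Corollary \ref{ger2} with $i = r$, which gives $\O_n(Y) = \langle A, \widetilde{\varepsilon}_1, \ldots, \widetilde{\varepsilon}_{r-1}, B_r\rangle$, since the trailing block $\widehat{\varepsilon}_{r+1}, \ldots, \widehat{\varepsilon}_r$ is empty. Substituting $B_r \subseteq \langle A \rangle$ removes $B_r$ from this generating set and produces the claimed presentation.

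The only subtle point is the choice between the two canonical extensions: one must use $\widehat{\theta}$ rather than $\widetilde{\theta}$, because the required identity $y_r \widehat{\theta} = y_{r-1}$ depends on $y_r$ landing in the half-open interval $[y_{r-1}, n)$; the dual interval $(y_{r-1}, n]$ appearing in the definition of $\widetilde{\theta}$ would instead force $y_r \widetilde{\theta} = y_r$, breaking the factorisation. Apart from this pairing, order-preservation of $\beta$ reduces to the single observation that $A_{r-1} \setminus \{n\}$ lies strictly to the right of $A_{r-2}$ and strictly to the left of $n$, and the rest of the verification is a direct mirror of Lemma \ref{imin}.
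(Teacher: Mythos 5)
Your proof is correct and is exactly the dual argument the paper intends: the paper omits the proof of this lemma, stating only that it is ``similar to the previous lemma,'' and your construction (splitting $\{n\}$ off the top block $A_{r-1}$, which contains both $y_r$ and $n$ by regularity and the hypothesis $y_r<n$, and factoring through $\widehat{\theta}$) is the correct mirror of the proof of Lemma~\ref{imin}, including the right choice of canonical extension and the application of Corollary~\ref{ger2} with $i=r$.
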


In particular, this lemma guarantees us that  
$\O_n(\{1,\ldots,r\})=\langle A, \widetilde{\varepsilon}_{1}, 
\ldots, \widetilde{\varepsilon}_{r-1}\rangle$. 

\begin{lemma}\label{imed}
If $y_1=1,\ldots,y_{i-1}=i-1$ and $y_i>i$, for some $2\le i \le r$, 
then $\O_n(Y)=\langle A, \widetilde{\varepsilon}_{1}, 
\ldots, \widetilde{\varepsilon}_{i-1},\widehat{\varepsilon}_{i},
\widehat{\varepsilon}_{i+1},\ldots, \widehat{\varepsilon}_{r}\rangle$.
\end{lemma}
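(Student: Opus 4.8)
The plan is to derive everything from Corollary \ref{ger2}. Taking its index to be the present $i$, we get $\O_n(Y)=\langle A,\widetilde\varepsilon_1,\ldots,\widetilde\varepsilon_{i-1},B_i,\widehat\varepsilon_{i+1},\ldots,\widehat\varepsilon_r\rangle$, so it suffices to prove the single inclusion $B_i\subseteq\langle A,\widehat\varepsilon_i\rangle$. Indeed, once this holds every generator appearing in Corollary \ref{ger2} lies in $\langle A,\widetilde\varepsilon_1,\ldots,\widetilde\varepsilon_{i-1},\widehat\varepsilon_i,\ldots,\widehat\varepsilon_r\rangle$, while the reverse inclusion is immediate since $\widehat\varepsilon_i\in B_i$. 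The whole task thus reduces to factoring an arbitrary $\alpha\in B_i$ into elements of $A$ and copies of $\widehat\varepsilon_i$. The one feature of $\widehat\varepsilon_i$ I shall use is that, on $Y$, it fixes every $y_j$ with $j\neq i$ and sends $y_i$ down to $y_{i-1}$ (because $y_{i-1}$ is the largest element of $Y\setminus\{y_i\}$ below $y_i$).

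Write $\alpha\in B_i$ with its kernel classes $A_1,\ldots,A_{r-1}$, so that $A_j\alpha=y_j$ for $j<i$, $A_j\alpha=y_{j+1}$ for $j\ge i$, and each $A_j$ meets $Y$. The main case is $|A_{i-1}|\ge2$. Here I define $\beta$ by keeping all forced values ($A_j\mapsto y_j$ for $j<i-1$ and $A_j\mapsto y_{j+1}$ for $j\ge i$) and splitting $A_{i-1}$ into its lower part, sent to $y_{i-1}$, and its single top point $\max A_{i-1}$, sent to $y_i$. Since $|A_{i-1}|\ge2$ both parts are nonempty, $\beta$ is order-preserving and $\im(\beta)=Y$, so $\beta\in A$. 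Applying $\widehat\varepsilon_i$ then pulls $y_i$ back to $y_{i-1}$ and fixes the remaining $y_j$, which gives exactly $\beta\widehat\varepsilon_i=\alpha$; hence $\alpha\in\langle A,\widehat\varepsilon_i\rangle$.

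The remaining case $|A_{i-1}|=1$, say $A_{i-1}=\{c\}$ with $c\in Y$, is the real difficulty, since $A_{i-1}$ can no longer be split. The plan is to factor $\alpha=\beta\gamma$ with $\beta\in A$ and $\gamma\in B_i$ whose $y_{i-1}$-class has at least two elements, and then apply the main case to $\gamma$, obtaining $\alpha=\beta(\beta'\widehat\varepsilon_i)\in\langle A,\widehat\varepsilon_i\rangle$. To build $\beta$ I would use a kernel class of $\alpha$ adjacent to $\{c\}$ that has two elements: if $A_i$ does, I redirect its least point to the missing value $y_i$ while keeping $c\mapsto y_{i-1}$; if instead $A_{i-2}$ does, I send $c\mapsto y_i$ and split the top of $A_{i-2}$ back onto $y_{i-1}$. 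Either way $\beta\in A$, and the residual $\gamma$ lands in $B_i$ with a two-element $y_{i-1}$-class, into which the gap point $i$ (satisfying $y_{i-1}=i-1<i<y_i$ and $i\notin Y$) can be routed.

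The crux, and the main obstacle, is to guarantee that one of $A_{i-2},A_i$ is not a singleton, i.e. that $A_{i-2},A_{i-1},A_i$ are not all singletons; this is where the hypothesis is essential. I plan to prove it by counting: if $A_{i-2}=\{c-1\}$, $A_{i-1}=\{c\}$, $A_i=\{c+1\}$ were all singletons, then $c-1,c,c+1\in Y$ and the hypothesis ($\{1,\ldots,i-1\}\subseteq Y$, $i\notin Y$) forces $c\ge i+2$, so $\{1,\ldots,i-1\}\cup\{c-1,c\}$ already supplies $i+1$ elements of $Y$ inside $\{1,\ldots,c\}$; meanwhile the $r-i$ top classes $A_i,\ldots,A_{r-1}$ must each meet $Y$ inside $\{c+1,\ldots,n\}$, supplying $r-i$ more, for a total of $r+1>|Y|$, a contradiction. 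The boundary indices are handled the same way: for $i=2$ the only neighbour $A_2$ would otherwise be the singleton $\{i\}$, and for $i=r$ the only neighbour $A_{r-2}$ would be $\{n-1\}$, neither of which lies in $Y$, so it cannot be a singleton. Once a two-element neighbour is available the redirection keeps both affected classes meeting $Y$, so $\gamma\in B_i$ is legitimate and the reduction closes.
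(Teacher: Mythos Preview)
Your argument is correct. Both proofs reduce, via Corollary \ref{ger2}, to showing $B_i\subseteq\langle A,\widehat\varepsilon_i\rangle$, but the case analyses differ.

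The paper argues directly from the location of the gap point $i$. Using that each kernel class meets $Y$, one sees (by an ordered pigeonhole) that $y_i$ lies in the class mapping to $y_{i-1}$ or in the class mapping to $y_{i+1}$; since $y_{i-1}=i-1<i<y_i$, the integer $i$ then lies in one of the three adjacent classes (those mapping to $y_{i-2}$, $y_{i-1}$, or $y_{i+1}$). Each of these three possibilities is dispatched with an explicit factorisation of $\alpha$ as a product of at most three elements of $A\cup\{\widehat\varepsilon_i\}$.

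Your route is more layered: you first isolate the pleasant case $|A_{i-1}|\ge 2$ and treat it directly (this coincides with the paper's ``$i\in A_{i-1}$'' case), and then, when $A_{i-1}$ is a singleton, you use a counting argument to force a two-element neighbour, factor $\alpha=\beta\gamma$ with $\beta\in A$ and $\gamma\in B_i$ having a two-element $y_{i-1}$-class, and finally feed $\gamma$ back into the pleasant case. This is sound, and your boundary checks for $i=2$ and $i=r$ are fine.

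The trade-off: the paper's argument is shorter and yields factorisations of bounded length in one stroke, while yours is more modular but needs an extra reduction step and a separate pigeonhole count. Note that your counting argument can be replaced by the one-line observation underlying the paper's proof: since $i\notin Y$ and each class meets $Y$, whichever of the three adjacent classes contains $i$ automatically has at least two elements. That would let you skip the $r+1>|Y|$ count entirely.
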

\begin{proof} 
Take $\alpha =\left(
              \begin{array}{cccccc}
                A_{1}  & \cdots & A_{i-1} & A_{i+1} & \cdots & A_{r}\\
               y_{1}  & \cdots & y_{i-1} &  y_{i+1}  & \cdots & y_{r}\\
              \end{array}
            \right)\in B_i$. 
Then $A_j\cap Y\ne\emptyset$, for all $j=1,\ldots,i-1,i+1,\ldots,r$, whence 
$y_i\in A_{i-1}\cup A_{i+1}$ and so $i\in A_{i-2}\cup  A_{i-1}\cup A_{i+1}$ 
(where $A_{i+1}=\emptyset$, if $i=r$, and $A_{i-2}=\emptyset$, if $i=2$).

First, suppose that $i\in A_{i+1}$ (in this case, we must have $i<r$). 
Then, since $A_{i+1}\cap Y\ne\emptyset$, we must have $|A_{i+1}|\ge2$.  
Define 
$$
\beta =\left(
              \begin{array}{cccccccc}
  A_1 & \cdots & A_{i-1}  & \min(A_{i+1}) & A_{i+1}\setminus\{\min(A_{i+1})\} & A_{i+2} & \cdots & A_r\\
  y_1 & \cdots & y_{i-1} & y_i & y_{i+1} & y_{i+2} & \cdots & y_r\\
              \end{array}
            \right)
$$
and
$$    
\theta =\left(
              \begin{array}{cccccccc}
              y_1 & \cdots & y_{i-1} & i   & y_i & y_{i+2} & \cdots &  y_r\\
              y_1 & \cdots & y_{i-1} & y_i & y_{i+1} &  y_{i+2} & \cdots &  y_r \\
              \end{array}
            \right).  
$$
Hence, clearly $\beta,\widehat{\theta}\in A$ and $\alpha=\beta\widehat{\theta}$.

Next, suppose that $i\in A_{i-1}$. Then, since $A_{i-1}\cap Y\ne\emptyset$, 
we must also have $|A_{i-1}|\ge2$. Hence, by defining  
$$
\beta =\left(
              \begin{array}{cccccccc}
  A_1 & \cdots & A_{i-2}  & \min(A_{i-1}) & A_{i-1}\setminus\{\min(A_{i-1})\} & A_{i+1} & \cdots & A_r\\
  y_1 & \cdots & y_{i-2} & y_{i-1} & y_{i} & y_{i+1} & \cdots & y_r\\
              \end{array}
            \right)\in A,  
$$
we obtain $\alpha=\beta\widehat{\varepsilon}_{i}$. 

Finally, suppose that $i\in A_{i-2}$ (in this case, we must have $i>2$). 
Once again, since $A_{i-2}\cap Y\ne\emptyset$, it follows that $|A_{i-1}|\ge2$. 
Then, define 
$$
\beta =\left(
              \begin{array}{ccccccccc}
  A_1 & \cdots & A_{i-3}  & \min(A_{i-2}) & A_{i-2}\setminus\{\min(A_{i-2})\} 
  & A_{i-1} & A_{i+1} & \cdots & A_r \\
  y_1 & \cdots & y_{i-3} & y_{i-2} & y_{i-1} & y_i & y_{i+1} & \cdots & y_r\\
              \end{array}
            \right)
$$
and
$$    
\theta =\left(
              \begin{array}{cccccccc}
              y_1 & \cdots & y_{i-2} & i       & y_i & y_{i+1} & \cdots &  y_r\\
              y_1 & \cdots & y_{i-2} & y_{i-1} & y_i & y_{i+1} & \cdots &  y_r \\
              \end{array}
            \right).  
$$
Clearly, we have $\beta,\widehat{\theta}\in A$ and 
$\alpha=\beta\widehat{\theta}\widehat{\varepsilon}_{i}$.            

Therefore, we proved that $B_i\subseteq\langle A, \widehat{\varepsilon}_{i}\rangle$ and so, by Corollary \ref{ger2}, the result follows.            
\end{proof}

Now, for $3\le i \le r$, define
$$    
\varepsilon_{1,i} = \left(
              \begin{array}{cccccc}
              y_1 & \cdots & y_{i-2} & y_{i} & \cdots &  y_r\\
              y_2 & \cdots & y_{i-1} & y_{i} & \cdots &  y_r\\
              \end{array}
            \right). 
$$
Clearly, $\widetilde{\varepsilon}_{1,i} \in B_1$. 

\begin{lemma}\label{lemcr}
If $y_1=1,\ldots,y_{i-1}=i-1$ and $y_i>i$, for some $3\le i \le r$, 
then $\widetilde{\varepsilon}_{1}, 
\widetilde{\varepsilon}_{i-1}\in
\langle A,\widetilde{\varepsilon}_{1,i}\rangle$. 
\end{lemma}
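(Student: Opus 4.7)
The plan is to produce two explicit elements $\alpha,\beta\in A$ realising $\widetilde{\varepsilon}_1=\alpha\widetilde{\varepsilon}_{1,i}$ and $\widetilde{\varepsilon}_{i-1}=\widetilde{\varepsilon}_{1,i}\beta$. The starting point is to unpack the action of $\widetilde{\varepsilon}_{1,i}$ under the hypothesis $y_k=k$ for $1\le k\le i-1$: on $Y$ it sends $1\mapsto 2,\ 2\mapsto 3,\ \ldots,\ i-2\mapsto i-1,\ i-1\mapsto y_i$ and fixes every $y_j$ with $j\ge i$. Consequently, within $Y$, the unique preimages of $2,\ldots,i-1$ are $1,\ldots,i-2$, the element $y_i$ has the two preimages $i-1$ and $y_i$, and each $y_j$ with $j\ge i+1$ is its own unique preimage.

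Reading the equation $\alpha\widetilde{\varepsilon}_{1,i}=\widetilde{\varepsilon}_1$ pointwise forces $\alpha(1)=\alpha(2)=1$, $\alpha(x)=x-1$ for $3\le x\le i-1$, $\alpha(x)\in\{i-1,y_i\}$ for $i\le x\le y_i$, and $\alpha(x)=\widetilde{\varepsilon}_1(x)$ for $x>y_i$; symmetrically, $\widetilde{\varepsilon}_{1,i}\beta=\widetilde{\varepsilon}_{i-1}$ forces $\beta(k)=k-1$ for $2\le k\le i-1$ and $\beta(y_j)=y_j$ for $j\ge i$, while $\beta(1)$ and the values of $\beta$ off $Y$ remain free. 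Either forced part already produces an order-preserving function, but its image misses some of $Y$: specifically $\{i-1,y_i\}$ is missing for $\alpha$ and $\{i-1\}$ for $\beta$, and these elements must be supplied within the free middle interval $\{i,\ldots,y_i\}$.

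I make the choices $\alpha(i)=i-1$ and $\alpha(x)=y_i$ for $i+1\le x\le y_i$, together with $\beta(1)=1$, $\beta(x)=i-1$ for $i\le x\le y_i-1$, and fill in $\beta$ on the remaining points $x\in X\setminus Y$ with $x>y_i$ by the order-preserving assignment $\beta(x)=y_{j+1}$ when $y_j<x\le y_{j+1}$ and $\beta(x)=y_r$ when $x>y_r$. Since $y_i\ge i+1$ the middle interval is non-empty, and order-preservation across the plateau is immediate (for instance $\alpha(i-1)=i-2\le i-1=\alpha(i)\le y_i=\alpha(i+1)$, and similarly for $\beta$). Both maps are then order-preserving with image exactly $Y$, so they lie in $A$, and the identities $\alpha\widetilde{\varepsilon}_{1,i}=\widetilde{\varepsilon}_1$ and $\widetilde{\varepsilon}_{1,i}\beta=\widetilde{\varepsilon}_{i-1}$ follow by a direct case check on $x\in X$. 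The main obstacle is exactly this surjectivity repair on the middle interval: a naive constant extension there would omit $i-1$ (and, for $\alpha$, also $y_i$) from the image, so the specific placement above is essentially forced.
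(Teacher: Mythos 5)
Your proof is correct and follows essentially the same route as the paper: exhibiting explicit elements of $A$ that factor $\widetilde{\varepsilon}_{1}$ and $\widetilde{\varepsilon}_{i-1}$ through $\widetilde{\varepsilon}_{1,i}$, with your $\alpha$ coinciding exactly with the paper's single witness $\widetilde{\theta}$ (the canonical extension of $\bigl(\begin{smallmatrix} y_2 & \cdots & y_{i-1} & y_{i-1}+1 & y_i & \cdots & y_r\\ y_1 & \cdots & y_{i-2} & y_{i-1} & y_i & \cdots & y_r\end{smallmatrix}\bigr)$). The only cosmetic difference is that the paper reuses this one element on both sides, i.e.\ $\widetilde{\varepsilon}_{1}=\widetilde{\theta}\,\widetilde{\varepsilon}_{1,i}$ and $\widetilde{\varepsilon}_{i-1}=\widetilde{\varepsilon}_{1,i}\,\widetilde{\theta}$, whereas you build a second (equally valid) element $\beta$ differing from $\widetilde{\theta}$ only on points outside $\im(\widetilde{\varepsilon}_{1,i})$.
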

\begin{proof}
Let $   
\theta =\left(
              \begin{array}{ccccccc}
    y_2 & \cdots & y_{i-1} & y_{i-1}+1 & y_i &\cdots &  y_r\\
    y_1 & \cdots & y_{i-2} & y_{i-1} & y_i & \cdots &  y_r \\
              \end{array}
            \right).  
$
Then $\widetilde{\theta}\in A$ and it is easy to verify that 
$\widetilde{\varepsilon}_{1}=\widetilde{\theta}\widetilde{\varepsilon}_{1,i}$ 
and 
$\widetilde{\varepsilon}_{i-1}=\widetilde{\varepsilon}_{1,i}\widetilde{\theta}$, 
which proves the lemma. 
\end{proof}

Next, for $2\le j \le r-1$, define
$$    
\varepsilon_{r,j} = \left(
              \begin{array}{cccccc}
              y_1 & \cdots & y_{j-1} & y_{j+1} & \cdots &  y_r\\
              y_1 & \cdots & y_{j-1} & y_{j} & \cdots &  y_{r-1}\\
              \end{array}
            \right). 
$$
It is clear that $\widehat{\varepsilon}_{r,j} \in B_r$. Moreover, similarly to the previous lemma, we may prove:  

\begin{lemma}\label{lemc}
If $y_r=n,\ldots,y_j=n-r+j$ and $y_{j-1}<n-r+j-1$, for some $2\le j \le r-1$, 
then $\widehat{\varepsilon}_{r}, 
\widehat{\varepsilon}_{j}\in
\langle A,\widehat{\varepsilon}_{r,j}\rangle$. 
\end{lemma}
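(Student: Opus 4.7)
The plan is to mirror, in dual form, the argument of Lemma \ref{lemcr}. The hypothesis $y_{j-1} < n-r+j-1 = y_j - 1$ guarantees that $y_j - 1$ is a ``gap'' element of $\{1,2,\ldots,n\}$ lying strictly between $y_{j-1}$ and $y_j$, and in particular not belonging to $Y$. Just as Lemma \ref{lemcr} used the gap element $y_{i-1}+1 = i$ sitting just below $y_i$, here the gap $y_j-1$ sitting just below $y_j$ will let us build a partial order-preserving function whose canonical complete extension lies in $A$ and realizes both desired factorizations at once.

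Concretely, I would define the partial transformation
\[
\theta = \left(
\begin{array}{cccccccc}
y_1 & \cdots & y_{j-1} & y_j - 1 & y_j & y_{j+1} & \cdots & y_{r-1} \\
y_1 & \cdots & y_{j-1} & y_j & y_{j+1} & y_{j+2} & \cdots & y_r
\end{array}
\right).
\]
Since $y_{j-1} < y_j - 1 < y_j < y_{j+1} < \cdots < y_{r-1}$, the $r$ elements of the domain are strictly ordered, and the image is exactly $Y$, so $\im(\widehat{\theta}) = Y$, that is, $\widehat{\theta}\in A$. The lemma then reduces to establishing the two identities
\[
\widehat{\varepsilon}_r = \widehat{\theta}\,\widehat{\varepsilon}_{r,j} \quad\text{and}\quad \widehat{\varepsilon}_j = \widehat{\varepsilon}_{r,j}\,\widehat{\theta},
\]
which immediately yield $\widehat{\varepsilon}_r,\widehat{\varepsilon}_j \in \langle A, \widehat{\varepsilon}_{r,j}\rangle$.

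The verification of these identities is a routine case analysis on the intervals of $\{1,\ldots,n\}$ determined by the breakpoints $y_1,\ldots,y_r$ (plus the extra point $y_j-1$ arising for $\widehat{\theta}$). The check is made transparent by recording the actions on $Y$: from the definition of $\widehat{\cdot}$ one reads off that $\widehat{\theta}$ is the identity on $\{y_1,\ldots,y_{j-1}\}$ and sends $y_k \mapsto y_{k+1}$ for $j \le k \le r-1$ (with $y_j - 1 \mapsto y_j$), while $\widehat{\varepsilon}_{r,j}$ is the identity on $\{y_1,\ldots,y_{j-1}\}$, sends $y_j \mapsto y_{j-1}$ (because $y_{j-1} \le y_j < y_{j+1}$), and sends $y_k \mapsto y_{k-1}$ for $j+1 \le k \le r$. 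Composing these actions and re-expanding on intervals produces exactly the piecewise form of $\widehat{\varepsilon}_r$ on one side and of $\widehat{\varepsilon}_j$ on the other. No real obstacle arises; the only minor care concerns the boundary indices $j = 2$ and $j = r-1$, where some intermediate intervals in the explicit $\widehat{\cdot}$ formula collapse or merge with the outermost ranges, but the uniform action on $Y$ described above is unaffected, and the two identities remain valid.
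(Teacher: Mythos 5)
Your proof is correct and is exactly the dual construction the paper intends when it says Lemma~\ref{lemc} is proved ``similarly to the previous lemma'': your $\widehat{\theta}$ is the mirror image of the $\widetilde{\theta}$ used in Lemma~\ref{lemcr}, it lies in $A$ because its image is all of $Y$, and the two identities $\widehat{\varepsilon}_r=\widehat{\theta}\,\widehat{\varepsilon}_{r,j}$ and $\widehat{\varepsilon}_j=\widehat{\varepsilon}_{r,j}\,\widehat{\theta}$ check out on every interval, with the hypothesis $y_{j-1}<y_j-1$ entering precisely to make $y_j-1$ an admissible extra domain point. No gaps.
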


Before proving Theorem \ref{main}, we still need two more lemmas. 

\begin{lemma}\label{lema}
If $y_k+1<y_{k+1}$, for some $2\le k \le r-1$, 
then $\widehat{\varepsilon}_{k}\in\langle A\rangle$. 
\end{lemma}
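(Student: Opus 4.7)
The plan is to exhibit $\widehat{\varepsilon}_k$ as a product $\beta\widehat{\theta}$ of two elements of $A$, from which $\widehat{\varepsilon}_k\in\langle A\rangle$ follows at once. The point of the hypothesis $y_k+1<y_{k+1}$ is that it provides an integer $y_k+1$ lying strictly between two consecutive elements of $Y$, and therefore not in $Y$; this ``free slot'' will be used to ensure that the second factor still hits every element of $Y$ (so lands in $A$) while at the same time redirecting $y_k$ to $y_{k-1}$, which is exactly the collapse performed by $\widehat{\varepsilon}_k$.

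For the first factor I would take $\beta=\widehat{\id_Y}$, the canonical complete extension of the partial identity on $Y$: then $\beta\in A$ and $\beta$ sends each block $[y_j,y_{j+1})$ to $y_j$, with the usual conventions at the two ends. For the second factor I would consider the partial order-preserving map
\[
\theta=\begin{pmatrix} y_1 & \cdots & y_{k-1} & y_k & y_k+1 & y_{k+1} & \cdots & y_r\\ y_1 & \cdots & y_{k-1} & y_{k-1} & y_k & y_{k+1} & \cdots & y_r\end{pmatrix}
\]
defined on the $(r+1)$-element subchain $\{y_1,\ldots,y_{k-1},y_k,y_k+1,y_{k+1},\ldots,y_r\}$ of $\{1,\ldots,n\}$. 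The strict inequality $y_k+1<y_{k+1}$ makes $y_k+1$ a genuinely new point in the domain and keeps both rows monotone; since $\im(\theta)=Y$, it follows that $\widehat{\theta}\in A$ as well.

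What remains is a direct block-by-block verification that $\beta\widehat{\theta}=\widehat{\varepsilon}_k$. Outside the ``double block'' $[y_{k-1},y_{k+1})$, each interval $[y_j,y_{j+1})$ is sent by $\beta$ to $y_j$ and then fixed by $\widehat{\theta}$, matching $\widehat{\varepsilon}_k$ exactly. On $[y_{k-1},y_k)$ the composition returns $y_{k-1}$; on $[y_k,y_{k+1})$ one has $\beta\mapsto y_k$ and then $\widehat{\theta}(y_k)=y_{k-1}$, again matching the value $y_{k-1}$ that $\widehat{\varepsilon}_k$ assigns throughout the double block. I expect the only step requiring a second glance to be the boundary behaviour in the extremal cases $k=2$ and $k=r-1$, but the formulas stay uniform since the construction uses only the existence of $y_{k-1}$ and $y_{k+1}$, both granted by the hypothesis $2\le k\le r-1$, together with $y_k+1<y_{k+1}$.
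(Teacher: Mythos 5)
Your proof is correct and follows essentially the same idea as the paper: exploit the gap point $y_k+1\notin Y$ to build an element of $A$ whose canonical extension sends $y_k$ to $y_{k-1}$ while still covering all of $Y$. The paper is marginally more economical, using a single $\theta$ on the $r$-element domain $\{y_1,\ldots,y_{k-1},y_k+1,y_{k+1},\ldots,y_r\}$ and observing that $\widehat{\varepsilon}_k=\widehat{\theta}^{\,2}$, whereas you factor through $\widehat{\id_Y}$ first, but the mechanism is identical.
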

\begin{proof}
Let $   
\theta =\left(
              \begin{array}{ccccccc}
    y_1 & \cdots & y_{k-1} & y_{k}+1 & y_{k+1} &\cdots &  y_{r}\\
    y_1 & \cdots & y_{k-1} & y_{k} & y_{k+1} & \cdots &  y_r \\
              \end{array}
            \right).  
$
Then $\widehat{\theta}\in A$ and we have 
$\widehat{\varepsilon}_{k}=\widehat{\theta}\,^2\in\langle A\rangle$,  
as required. 
\end{proof}

\begin{lemma}\label{lemb}
If $y_{k-1}<y_k-1$ and $y_k<n-r+k$, for some $2\le k \le r-1$, 
then $\widehat{\varepsilon}_{k}\in\langle A\rangle$. 
\end{lemma}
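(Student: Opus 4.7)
The plan is to express $\widehat{\varepsilon}_k$ as a product $\widehat{\psi}\,\widehat{\phi}$ of two elements of $A$, exploiting simultaneously the gap $y_k - 1 \notin Y$ below $y_k$ (guaranteed by $y_{k-1} < y_k - 1$) and a gap in $Y$ somewhere above $y_k$ (guaranteed by $y_k < n - r + k$).

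First I would let $m$ be the smallest element of $\{y_k+1, \ldots, n\}$ not in $Y$, which exists by the second hypothesis, and set $s = m - y_k - 1 \geq 0$. Then $y_{k+j} = y_k + j$ for $1 \leq j \leq s$ and $m = y_{k+s} + 1$. If $s = 0$, then $y_{k+1} > y_k + 1$ and Lemma \ref{lema} immediately gives $\widehat{\varepsilon}_k \in \langle A\rangle$; so I would assume $s \geq 1$ from now on.

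Next I would construct two partial order-preserving maps into $Y$. The first, $\psi$, has domain $(Y \setminus \{y_k\}) \cup \{m\}$; it is the identity on $\{y_1, \ldots, y_{k-1}\} \cup \{y_{k+s+1}, \ldots, y_r\}$, sends $y_{k+j} \mapsto y_{k+j-1}$ for $1 \leq j \leq s$, and sends $m \mapsto y_{k+s}$. The second, $\phi$, has domain $Y \cup \{y_k - 1\}$; it is the identity on $\{y_1, \ldots, y_{k-1}\} \cup \{y_{k+s}, \ldots, y_r\}$, sends $y_k - 1 \mapsto y_k$, and sends $y_{k+j} \mapsto y_{k+j+1}$ for $0 \leq j \leq s-1$. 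The first hypothesis $y_{k-1} < y_k - 1$ is exactly what makes $y_k - 1$ fit strictly between $y_{k-1}$ and $y_k$ in the domain of $\phi$. Both $\psi$ and $\phi$ are then visibly order-preserving with image exactly $Y$, so $\widehat{\psi}, \widehat{\phi} \in A$.

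It remains to verify $\widehat{\varepsilon}_k = \widehat{\psi}\,\widehat{\phi}$, which I would do by a piecewise computation guided by $\widehat{\psi}$'s canonical definition. The set $\{1, \ldots, n\}$ splits into three kinds of region: the outer region $[1, y_{k-1}) \cup [y_{k+s+1}, n]$ (with the convention that the second piece becomes $[m, n]$ when $k + s = r$), on which both maps are identity-like on $Y \setminus \{y_k\}$ and the composition matches $\widehat{\varepsilon}_k$ trivially; the critical interval $[y_{k-1}, y_{k+1})$ containing $y_k$, on which $\widehat{\psi}$ sends every point to $y_{k-1}$ and $\widehat{\phi}$ fixes $y_{k-1}$, again matching $\widehat{\varepsilon}_k$; and the shift region $[y_{k+1}, y_{k+s+1})$, inside which the downward index-shift of $\widehat{\psi}$ is exactly undone by the upward index-shift of $\widehat{\phi}$. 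The main obstacle is precisely this last verification: it is routine but involves up to $s+2$ sub-intervals of the shift region and several degenerate configurations ($k = 2$, $k = r - 1$, or $k + s = r$) in which some index ranges become empty and the pieces must be reassembled accordingly.
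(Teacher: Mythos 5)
Your proposal is correct and follows essentially the same route as the paper: the case $s=0$ is the paper's case $\ell=k$ (reduce to Lemma \ref{lema}), and for $s\ge1$ your $\widehat{\psi}$ and $\widehat{\phi}$ coincide with the paper's $\widehat{\theta}_1$ and $\widehat{\theta}_2$ upon choosing the gap element $y=m$ minimal (so $\ell=k+s$), the only cosmetic difference being that your $\phi$ keeps $y_{k+s}$ in its domain and is therefore non-injective, while the paper's $\theta_2$ omits it --- the canonical extensions are the same map of $A$ either way.
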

\begin{proof}
Since $y_k<n-r+k$ then $n-y_k+1>r-k+1$, 
whence $\{y_k,\ldots,y_r\}\subsetneq\{y_k,\ldots,n\}$. 
Take $y\in \{y_k,\ldots,n\}\setminus\{y_k,\ldots,y_r\}$ 
and let $\ell\in\{k,\ldots,r\}$ be such that $y_\ell<y<y_{\ell+1}$ 
(where $y_{\ell+1}=n+1$, if $\ell=r$). 
If $\ell=k$ then $y_k+1\le y<y_{k+1}$ and so, by Lemma \ref{lema}, 
$\widehat{\varepsilon}_{k}\in\langle A\rangle$. 
On the other hand, suppose that $\ell\ge k+1$ and define 
$$   
\theta_1 =\left(
              \begin{array}{cccccccccc}
 y_1 & \cdots & y_{k-1} & y_{k+1} & \cdots & y_\ell & y & y_{\ell+1} & \cdots &  y_{r}\\
 y_1 & \cdots & y_{k-1} & y_{k} & \cdots & y_{\ell-1} & y_\ell & y_{\ell+1} & \cdots &  y_r \\
              \end{array}
            \right)
$$
and
$$ 
\theta_2 =\left(
              \begin{array}{ccccccccccc}
 y_1 & \cdots & y_{k-1} & y_k-1 & y_k    & y_{k+1} & \cdots & y_{\ell-1} & y_{\ell+1} & \cdots &  y_{r}\\
 y_1 & \cdots & y_{k-1} & y_k & y_{k+1}  & y_{k+2} & \cdots & y_{\ell}  & y_{\ell+1} & \cdots &  y_r \\
              \end{array}
            \right).             
$$
Then $\widehat{\theta}_1,\widehat{\theta}_2\in A$ and it is easy to show that 
$\widehat{\varepsilon}_{k}=\widehat{\theta}_1\widehat{\theta}_2\in\langle A\rangle$,  
as required. 
\end{proof}

Finally, we are now able to prove the main result of this section. 
    
\begin{proof}[Proof of Theorem \ref{main}]
Let $i=\min\{k\in\{1,\ldots,n\}\mid k\not\in Y\}$. Recall that $1\le i\le r+1$. 

If $i=r+1$ then $Y=\{1,\ldots,r\}$, whence $Y^\sharp=\{1,\ldots,r-1\}$. Moreover, as we already observed, Lemma \ref{imax} ensures us that 
$\O_n(Y)=\langle A, \widetilde{\varepsilon}_{1}, 
\ldots, \widetilde{\varepsilon}_{r-1}\rangle$ and so, in this case, we have a generating set of $\O_n(Y)$ with $|A|+|Y^\sharp|$ elements. 

Next, suppose that $1\le i\le r$. Then, by Lemmas \ref{imin}, \ref{imed} and \ref{lemcr}, we have 
\begin{equation}\label{genI}
\O_n(Y)=\left\{
\begin{array}{ll}
\langle A, \widehat{\varepsilon}_{2},\ldots, \widehat{\varepsilon}_{r}\rangle & 
\mbox{if $i=1$}\\
\langle A, \widetilde{\varepsilon}_{1},\widehat{\varepsilon}_{2},\ldots, \widehat{\varepsilon}_{r}\rangle & 
\mbox{if $i=2$} \\
\langle A, \widetilde{\varepsilon}_{1,i}, \widetilde{\varepsilon}_{2}, 
\ldots, \widetilde{\varepsilon}_{i-2},\widehat{\varepsilon}_{i},
\widehat{\varepsilon}_{i+1},\ldots, \widehat{\varepsilon}_{r}\rangle & 
\mbox{if $3\le i\le r$}. 
\end{array}
\right.
\end{equation}
Notice that, we have $y_1=1\in Y^\sharp$, for $i=2$, 
and $\{y_1,\ldots,y_{i-2}\}=\{1,\ldots,i-2\}\subseteq Y^\sharp$, for $3\le i\le r$. 

Now, let $j\in\{1,\ldots,r+1\}$ be such that  
$n-r+j-1\not\in Y$ and $\{n-r+j,\ldots,n\}\subseteq Y$. 
Notice that $i<n-r+j$, since $\{1,\ldots,i-1\}\cup \{n-r+j,\ldots,n\}\subseteq Y\subsetneq \{1,\ldots,n\}$. Particularly, it follows that 
$\{1,\ldots,i-1\}\cap\{n-r+j,\ldots,n\}=\emptyset$, whence  $(i-1)+(r-j+1)\le |Y|=r$
and so $i\le j$. 

If $j=1$ then $Y=\{n-r+1,\ldots,n\}$ and thus $Y^\sharp=\{n-r+2,\ldots,n\}$. 
Furthermore, in this case, we also have $i=1$ and so $A\cup\{\widehat{\varepsilon}_{2},\ldots, \widehat{\varepsilon}_{r}\}$ is a generating set of $\O_n(Y)$ with $|A|+|Y^\sharp|$ elements. 

If $j=r+1$ then $n\not\in Y$, whence $y_r<n$ (and so $y_r\not\in Y^\sharp$). 
Thus, by Lemma \ref{imax}, from (\ref{genI}) we get 
\begin{equation}\label{genII}
\O_n(Y)=\left\{
\begin{array}{ll}
\langle A, \widehat{\varepsilon}_{2},\ldots, \widehat{\varepsilon}_{r-1}\rangle & 
\mbox{if $i=1$}\\
\langle A, \widetilde{\varepsilon}_{1},\widehat{\varepsilon}_{2},\ldots, \widehat{\varepsilon}_{r-1}\rangle & 
\mbox{if $i=2$} \\
\langle A, \widetilde{\varepsilon}_{1,i}, \widetilde{\varepsilon}_{2}, 
\ldots, \widetilde{\varepsilon}_{i-2},\widehat{\varepsilon}_{i},
\widehat{\varepsilon}_{i+1},\ldots, \widehat{\varepsilon}_{r-1}\rangle & 
\mbox{if $3\le i\le r$}. 
\end{array}
\right.
\end{equation}

If $j=r$ then $n\in Y$ and so $y_r=n\in Y^\sharp$. 

Finally, if $2\le j\le r-1$ then, by Lemma \ref{lemc}, 
from (\ref{genI}) we obtain  
\begin{equation}\label{genIII}
\O_n(Y)=\left\{
\begin{array}{ll}
\langle A, \widehat{\varepsilon}_{2},\ldots, 
\widehat{\varepsilon}_{j-1},\widehat{\varepsilon}_{j+1},\ldots,\widehat{\varepsilon}_{r-1}, 
\widehat{\varepsilon}_{r,j}\rangle & 
\mbox{if $i=1$}\\
\langle A, \widetilde{\varepsilon}_{1},\widehat{\varepsilon}_{2},\ldots,
\widehat{\varepsilon}_{j-1},\widehat{\varepsilon}_{j+1},\ldots,\widehat{\varepsilon}_{r-1}, 
\widehat{\varepsilon}_{r,j}\rangle & 
\mbox{if $i=2$} \\
\langle A, \widetilde{\varepsilon}_{1,i}, \widetilde{\varepsilon}_{2}, 
\ldots, \widetilde{\varepsilon}_{i-2},\widehat{\varepsilon}_{i},
\ldots, \widehat{\varepsilon}_{j-1},\widehat{\varepsilon}_{j+1},\ldots,\widehat{\varepsilon}_{r-1}, 
\widehat{\varepsilon}_{r,j}\rangle & 
\mbox{if $3\le i\le r$}. 
\end{array}
\right.
\end{equation}
Notice that, in this case, we have 
$\{y_{j+1},\ldots,y_r\}=\{n-r+j+1,\ldots,n\}\subseteq Y^\sharp$. 

Now, observe that if $y_k=n-r+k$, for some $1\le k\le r$, 
then $y_\ell=n-r+\ell$, for all $k\le \ell\le r$. 

Thus, let us take an element of the type $\widehat{\varepsilon}_{k}$ from the sets of generators (\ref{genII}), for $j=r+1$, or from the sets of generators 
(\ref{genI}), for $j=r$, or from the sets of generators (\ref{genIII}),  
for $2\le j\le r-1$, such that $y_k\not\in Y^\sharp$. 
Hence $2\le k\le r-1$ and, from what we observed above and the definition of $j$, 
we must have $y_k<n-r+k$. 
Moreover, since $y_k\not\in Y^\sharp$, $y_k+1<y_{k+1}$ or $y_{k-1}<y_k-1$ and so, by Lemma \ref{lema} or by Lemma \ref{lemb}, respectively, 
we have $\widehat{\varepsilon}_{k}\in\langle A\rangle$. Therefore, in each case, by removing all the elements of this type from the considered generating set, we obtain a new generating set of $\O_n(Y)$ with exactly $|A|+|Y^\sharp|$ elements. 

So far we proved that $\rank(\O_n(Y))\le |A|+|Y^\sharp|$. Next, we will prove the opposite inequality. 

Let $D$ be any generating set of $\O_n(Y)$. Then, by Proposition \ref{ger1}, we have  $A\subseteq D$. Now, for each $j \in \{1, \ldots, r\}$, define  
$
C_{j} = \{\alpha \in \O_n(Y) \mid \im(\alpha) = Y\setminus\{y_{j}\}\}
$
and let $C=C_1\,\dot{\cup}\,\cdots\,\dot{\cup}\,C_r$. 
Hence, by showing that $C_j\cap D\ne\emptyset$, for all $j \in \{1, \ldots, r\}$ such that $y_j\in Y^\sharp$, we get $\rank(\O_n(Y))\ge |A|+|Y^\sharp|$ and so  $\rank(\O_n(Y)) = |A|+|Y^\sharp|$, as required. 
Therefore, let $j \in \{1, \ldots, r\}$ be such that $y_j\in Y^\sharp$. 

First, suppose that $j\in\{1,r\}$. Then $y_j\in\{1,n\}$ and so any transformation $\beta\in\O_n$ such that $y_j\in\im(\beta)$ must fix $y_j$. 
Let $\alpha$ be any element of $C_j$ and let $\beta_1,\ldots,\beta_k\in D$ ($k\in\N$) be such that $\alpha=\beta_1\cdots\beta_k$. 
Since $\alpha$ has rank $r-1$, we deduce that $\beta_1,\ldots,\beta_k\in A\cup C$. If $\beta_1,\ldots,\beta_k\in (A\cup C)\setminus C_j$ then $y_j\in\im(\beta_i)$, for all $i \in \{1, \ldots, k\}$, and so 
$\beta_i$ fixes $y_j$, for all $i \in \{1, \ldots, k\}$, whence $\alpha=\beta_1\cdots\beta_k$ fixes $y_j$, which is a contradiction. 
Thus $\beta_i\in C_j$, for some $i \in \{1, \ldots, k\}$, 
and so $C_j\cap D\ne\emptyset$.  

Now, assume that $1<j<r$. Then $y_{j-1}=y_j-1$ and $y_{j+1}=y_j+1$.
Take 
$$ 
\alpha =\left(
              \begin{array}{cccccccccc}
 1 & \cdots & j-1 & j & j+1  & \cdots & r-1 & r & \cdots &  n\\
 y_1 & \cdots & y_{j-1} & y_{j+1}  & y_{j+2} & \cdots & y_r  & y_r & \cdots &  y_r \\
              \end{array}
            \right).             
$$
Then $\alpha\in C_j$ and, as above, there exist $k\in\N$ and $\beta_1,\ldots,\beta_k\in D\cap (A\cup C)$ such that $\alpha=\beta_1\cdots\beta_k$. 
If $k=1$ then $\beta_1=\alpha\in C_j\cap D$. So, suppose that $k>1$. 
Moreover, we may assume that $\alpha\ne\beta_1\cdots\beta_i$, for $i=1,\ldots,k-1$. 

Let $\beta=\beta_1\cdots\beta_{k-1}$. 
Then $\beta\in A\cup C$ and, as $\alpha$ is injective in $\{1,\ldots,r-1\}$, so is $\beta$. 

Suppose that $\beta\in A$. Then 
$$ 
\beta =\left(
              \begin{array}{ccccccccccccc}
 1 & \cdots & j-1 & j & j+1  & \cdots & r-1 & r & \cdots & r-1+t & r+t & \cdots & n\\
 y_1 & \cdots &  y_{j-1} &y_j & y_{j+1}   & \cdots & y_{r-1}  & y_{r-1} & \cdots & y_{r-1} & y_r & \cdots & y_r \\
              \end{array}
            \right),              
$$
for some $0\le t\le n-r$, and so, as $\alpha=\beta\beta_k$, the restriction of $\beta_k$ to $Y$ is transformation 
$$ 
\theta=\left(
              \begin{array}{cccccccc}
 y_1 & \cdots & y_{j-1} & y_j & y_{j+1}  & \cdots & y_{r-1} & y_r \\
 y_1 & \cdots & y_{j-1} & y_{j+1}  & y_{j+2} & \cdots & y_r  & y_r  \\
              \end{array}
            \right).             
$$
If $\beta_k\in A$ then there exists $y_{j-1}<y<y_j$ such that $y\beta_k=y_j$, which contradicts the equality $y_{j-1}=y_j-1$. Thus $\beta_k\in C_j$ and so $C_j\cap D\ne\emptyset$. 

Finally, suppose that $\beta\in C$. Then 
$$ 
\beta =\left(
              \begin{array}{cccccccccc}
 1 & \cdots & j-1 & j & j+1  & \cdots & r-1 & r & \cdots & n\\
 y_1 & \cdots &  y_{j-1} & y_j & y_{j+1} & \cdots & y_{r-1}  & y_{r-1}& \cdots & y_{r-1} \\
              \end{array}
            \right)        
$$
or  
$$ 
\beta =\left(
              \begin{array}{cccccccccc}
 1 & \cdots & \ell-1 & \ell & \ell+1  & \cdots & r-1 & r & \cdots & n\\
 y_1 & \cdots &  y_{\ell-1} & y_{\ell+1} & y_{\ell+2} & \cdots & y_{r}  & y_{r}& \cdots & y_{r} \\
              \end{array}
            \right),              
$$
for some $1\le\ell\le r-1$. In the first case, the restriction of $\beta_k$ to $Y$ must be again the transformation $\theta$ defined above, which once again implies that $\beta_k\in C_j$ and so $C_j\cap D\ne\emptyset$. So, suppose we have the second case. First, observe that, as $\beta\ne\alpha$, then $\ell\ne j$. 

If $j<\ell\le r-1$ then the restriction of $\beta_k$ to $Y\setminus\{y_\ell\}$ is transformation 
$$ 
\left(
              \begin{array}{cccccccccc}
 y_1 & \cdots & y_{j-1} & y_j &      y_{j+1}  & \cdots & 
 y_{\ell-1} & y_{\ell+1} & \cdots & y_r \\
 y_1 & \cdots & y_{j-1} & y_{j+1}  & y_{j+2} & \cdots  & 
 y_\ell & y_{\ell+1} &  \cdots  & y_r  \\
              \end{array}
            \right)         
$$
and so, as above, $\beta_k\in A$ implies that there is $y_{j-1}<y<y_j$ such that $y\beta_k=y_j$, which contradicts, once more, the equality $y_{j-1}=y_j-1$. Thus, also in this situation, $\beta_k\in C_j$ and so $C_j\cap D\ne\emptyset$. 

On the other hand, if $1\le\ell<j$ then the restriction of $\beta_k$ to $Y\setminus\{y_\ell\}$ is transformation 
$$ 
\left(
              \begin{array}{cccccccccc}
 y_1 & \cdots & y_{\ell-1} & y_{\ell+1}  & y_{\ell+2} & \cdots & y_j    & 
 y_{j+1} & \cdots & y_r \\
 y_1 & \cdots & y_{\ell-1} & y_{\ell}  & y_{\ell+1} & \cdots  & y_{j-1} & 
 y_{j+1} &  \cdots  & y_r  \\
              \end{array}
            \right),           
$$
which also implies that $\beta_k\in C_j$. In fact, if $\beta_k\in A$ then there exists $y_j<y<y_{j+1}$ such that $y\beta_k=y_j$, which contradicts, this time, the equality $y_{j+1}=y_j+1$. Thus, under these conditions, also $C_j\cap D\ne\emptyset$. 

Therefore, we proved that $C_j\cap D\ne\emptyset$, for all $j \in \{1, \ldots, r\}$ such that $y_j\in Y^\sharp$, as required. 
\end{proof}

Observe that $\O_n(Y)$ is generated by $A$ if and only if $Y^\sharp=\emptyset$. Moreover, in this case, $$\rank(\O_n(Y))= |A|=\binom{n-1}{r-1}.$$

\paragraph*{Acknowledgments.} This research was mainly carried out during the visit of the second and fourth authors to Faculdade de Ci\^encias e Tecnologia da Universidade Nova de Lisboa and Centro de  \'Algebra da Universidade de Lisboa  between August and October 2011.

The authors would like to thank Francisco and Cl\'audia Coelho for their help in reviewing the text of this paper. 


\lastpage 

\end{document}